\newtheorem{theo}{Theorem}[section]
\newtheorem{prop}{Proposition}[section]
\newtheorem{coro}{Corollary}[section]
\newtheorem{lemma}{Lemma}[section]
\theoremstyle{definition}
\newtheorem{definition}{Definition}[section]
\newtheorem{remark}{Remark}[section]
\newcommand{\s}{{\mbox{\boldmath $s$}}}
\def\co{\mathbb C}
\def\r{\mathbb R}
\def\h{\mathbb H}
\def\s{\mathbb S}
\def\n{\mathbb N}
\newcommand{\R}{\mathbb R}
\newcommand{\C}{\mathbb C}
\newcommand{\di}{\mathbb D}
\newcommand{\wt}{\widetilde}
\newcommand{\wh}{\widehat}
\renewcommand{\Re}{{\rm Re}}
\renewcommand{\Im}{{\rm Im}}
\let\h=\hip
\def\s{\mathbb  S}
\def\rmd{\mathop{\rm d\kern -1pt}\nolimits}
\def\rme{\mathop{\rm e\kern -1pt}\nolimits}
\def\bel{ \medskip
 \centerline{$ \ast \hbox to 1.0cm{}\ast \hbox to 1.0cm{}\ast $}
}
\def\longerrightarrow{-\kern-5pt\longrightarrow}
\def\star{\lower 1pt\hbox{*}}
\def \nulset {
\raise 1pt\hbox{ \hskip -3pt$\not$\kern -0.2pt \raise
.7pt\hbox{${\scriptstyle\bigcirc}$}}}
\newcommand{\hd}{\mathbb{H}^2}
\newcommand{\sd}{\mathbb{S}^2}
\newcommand{\hi}[1]{\mathbb{H}^#1}
\newcommand{\ch}{\cosh}
\newcommand{\sh}{\sinh}
\newcommand{\pain}{\partial_{\infty}}
\newcommand{\ov}[1]{\overline{#1}}
\let\leq=\leqslant
\let\geq=\geqslant
\begin{document}

\title[ A Schoen theorem for minimal surfaces  in ${\mathbb H}^2\times {\mathbb
R}$]{A Schoen theorem for minimal surfaces  in ${\mathbb H}^2\times {\mathbb
R}$}

\author[L. Hauswirth, $\ $ B. Nelli, $\ $  R. Sa
Earp, $\ $ E. Toubiana  ]{Laurent Hauswirth, Barbara Nelli,
 Ricardo Sa
Earp,
Eric Toubiana}

\address{Universit\'e Paris-Est Marne-la-Vall\'ee \newline
Laboratoire d'Analyse et Math\'ematiques Appliqu\'ees \newline
Cit\'e Descartes \newline
5 Boulevard Descartes \newline
 Champs-sur-Marne \newline
 77454 Marne-la-Vall\'ee Cedex 2 \newline
 France}
\email{Laurent.Hauswirth@univ-mlv.fr}

\address{DISIM \newline
Universit\'a di L'Aquila \newline
via Vetoio - Loc. Coppito \newline
67010 (L'Aquila) \newline
 Italy}
\email{nelli@univaq.it}

 \address{Departamento de Matem\'atica \newline
  Pontif\'\i cia Universidade Cat\'olica do Rio de Janeiro\newline
Rio de Janeiro \newline
22453-900 RJ \newline
 Brazil }
\email{earp@mat.puc-rio.br}

\address{Institut de Math\'ematiques de Jussieu - Paris Rive Gauche \newline
Universit\'e Paris Diderot - Paris 7 \newline
Equipe G\'eom\'etrie et Dynamique,  UMR 7586 \newline
B\^atiment Sophie Germain \newline
Case 7012 \newline
75205 Paris Cedex 13 \newline
France}
\email{eric.toubiana@imj-prg.fr}

\thanks{
 Mathematics subject classification: 53A10, 53C42, 49Q05.
\\
The second, third and fourth authors were partially supported by CNPq
and
FAPERJ of
Brasil.
}

\date{\today}

\begin{abstract}
In this paper we prove that a complete minimal surface immersed
in $\hi2 \times \R,$ with
 finite total
curvature and two ends, each one asymptotic to a vertical geodesic plane, must
be a  horizontal catenoid. Moreover,
 we give a geometric description of
minimal
ends of finite total curvature in $\hi2 \times \R$.
 We also prove
that a minimal complete end $E$  with finite total curvature is properly
immersed and that the Gaussian curvature  of $E$ is locally bounded
in terms of the geodesic distance to its boundary.
\end{abstract}

\keywords{minimal surface, minimal end, finite total curvature, harmonic map,
minimal graph}

\maketitle

\section{Introduction}

\hspace{.1cm} In the early eighties, R. Schoen \cite{Schoen}  proved a beautiful theorem about
minimal surfaces in Euclidean space.
Namely, a complete and
connected
minimal surface immersed in $\R^3$ with two embedded ends of finite total
curvature is
a catenoid.

 In his article, R. Schoen described the structure of finite total curvature
ends minimally embedded in $\R^3,$ relying on
 the results of   A. Huber \cite{Hub} and  R. Osserman \cite{Osserman}
about the Weierstrass representation of such ends.

\hspace{.1cm} At the beginning of this century, the discovery of a generalized Hopf
differential by U. Abresch and H. Rosenberg \cite{Abresch-Rosenberg}
stimulated the  study of minimal surfaces in
three-dimensional homogeneous manifolds.
Many new
embedded and complete minimal surfaces have been found in $\hi2 \times \R$. In
particular
J. Pyo \cite{P} and
F. Morabito and M. Rodriguez \cite{MR} have
constructed,
independently, a family of minimal embedded annuli with
finite total curvature. Each end of such annuli is  asymptotic to  a vertical
geodesic plane.
 Such  surface is called   a {\em horizontal catenoid}, see Figure
\ref{Fig-Hor-catenoid}.
\begin{figure}[!ht]
\centerline{\includegraphics[scale=0.5]{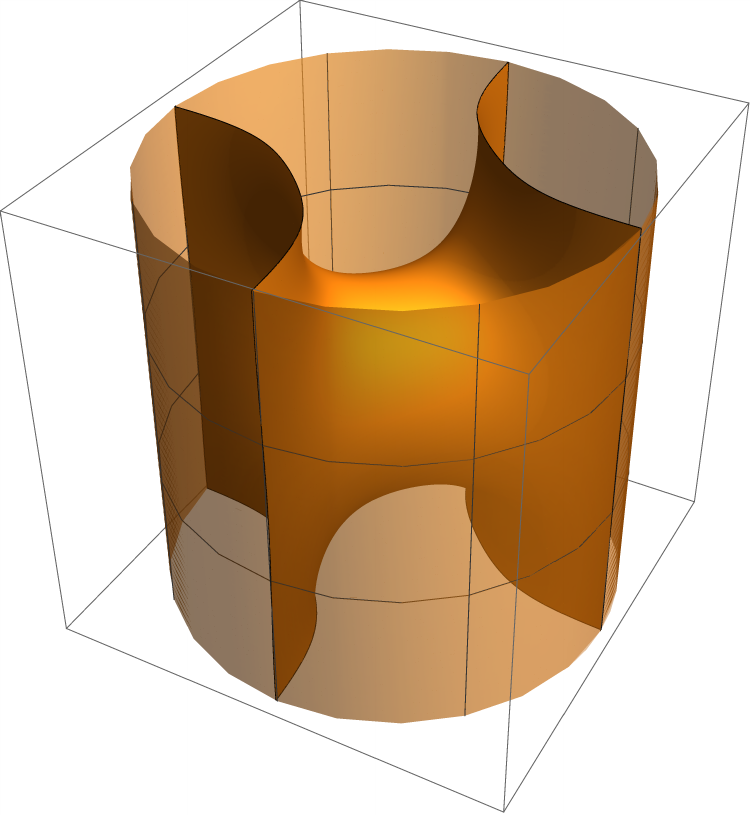}}
\caption{A horizontal catenoid in $\hi2 \times \R$
({\em courtesy of the referee})}
\label{Fig-Hor-catenoid}
\end{figure}

\

\hspace{.1cm} In this article  we prove the
following theorem.

\

\noindent {\bf Main Theorem.} {\em A complete and connected minimal surface
immersed in $\hi2
\times \R$ with {   nonzero}
 finite total
curvature and two ends, each one asymptotic to a vertical geodesic plane, is a
horizontal catenoid}.

\

Following the same spirit of Schoen's work, we describe the  full geometry
of  minimal ends  of finite total curvature in $\hi2 \times \R$ and we
give an interpretation of it in terms of closed polygonal curves
(see Definition \ref{D.Polygone} and Proposition \ref{P.Polygone}).
The study of
such ends was first developed  by the first author  and H. Rosenberg in \cite{HR}.

   We recall that in $\R^3$, there are only two kinds of embedded minimal
ends with finite total curvature: such an end is necessarily asymptotic to a
catenoid ({\em catenoidal end}) or to a plane ({\em planar end}).
It is worthwhile to notice that in $\hi2 \times \R$ there are many more
such
ends. Namely, in the Poincar\'e disk model of the hyperbolic plane, consider
the
domain $D$ with boundary the ideal polygon $\Gamma$ with vertices the $2n$
points $e^{i\frac{\pi}{k}} \in \pain \hi2$, $k=1,\dots,2n$, $n\geq 2$. Then,
P. Collin and H. Rosenberg have proved in \cite[Theorem 1]{C-R} a
Jenkins-Serrin type result:
there exists a minimal vertical graph
over $D$ taking the asymptotic values $+\infty$ and $-\infty$ alternatively on
the sides of $\Gamma$. Those examples show that there exist infinitely
many minimal embedded ends with finite total curvature in $\hi2\times \r$.

We observe that each one of those examples is properly embedded, has
finite total curvature  and one end. If $M$ is a properly embedded minimal
surface in $\hi2 \times \R$ with finite total curvature and two ends, it
is not known if each end must be asymptotic to a vertical totally geodesic
plane. For example, is it possible to connect two disjoint minimal vertical
graphs as above, with a vertical neck of a catenoid?

\

 The technical tools developped in order to prove the Main Theorem,
allow us to prove two further results. A minimal complete end with finite
total curvature is properly immersed
(Theorem \ref{T.propre}), and on such an end, say  $E$,
the Gaussian curvature is locally bounded
in terms of the geodesic distance to the boundary of $E$ (Theorem
\ref{T.estimees de courbure}).

\

The paper is organized as follows.

In Section \ref{Sec.End}, we  study  the geometry of
minimal
ends of finite total curvature. The main geometric property  is that horizontal
sections of finite total curvature ends converge towards a horizontal geodesic.
In Section \ref{Sec.Surface}, we prove the Main Theorem. In the Appendix, we
study  the geometry of curves with bounded curvature in the hyperbolic plane.

\subsection*{Acknowledgements} The second and the  fourth authors wish to thank
{\em
Departamento de
Matem\'atica da
PUC-Rio} for
the kind hospitality.
The second and third author
wish to thank {\em Laboratoire} {\em G\'eom\'etrie et Dynamique de l'Institut de
Math\'ematiques de Jussieu} for the kind hospitality.

 The authors are very grateful to the referee for the valuable observations
and for providing  the figure of a horizontal catenoid.

\section{Minimal ends with finite total curvature in $\hi2 \times
\R$}\label{Sec.End}

 \hspace{.1cm} In this section we give the geometrical  structure of a finite total
curvature end.
We rely on the  complex analysis  involved in  the  theory of minimal surfaces
 \cite{HST},  \cite{HR}, \cite{ST}
and on   the theory of harmonic maps developed  by Z. Han, L. Tan, A. Treiberg
and
 T. Wan \cite{HTTW} and Y. N. Minsky in
 \cite{Minsky}.

\hspace{.1cm} Let $M$ be a Riemann surface and let $X=(F,h) : M \rightarrow \hi2\times \R$ be
a conformal and minimal immersion. The map  $F:M\rightarrow \hi2$ is  harmonic
and $h$ is a harmonic function on $M$. Let $z$ be a local conformal
coordinate on $M$ and let $ds^2=\sigma^2 (u)\, \vert du\vert^2$ be the
hyperbolic metric on $\hi2$ in the model of the unit disk.
We set
\begin{equation*}
 Q(F):= (\sigma \circ F)^2 F_z \ov F _z dz^2 =\phi (z) dz^2,
\end{equation*}
 then
$Q(F)$  is a
quadratic holomorphic differential globally defined on  $M$, known as  the {\em
quadratic Hopf differential   associated to $F$}.

Since we consider conformal immersion we have
\begin{equation*}
\left\{
\begin{array}{l}
(\sigma \circ F)^2 |ÊF_x|^2 + h_x^2 = (\sigma \circ F)^2 |ÊF_y|^2 + h_y^2 \\
\\
(\sigma \circ F)^2 \langleÊF_x, F_y \rangle|^2 + h_x^2=0.
\end{array}
\right.
\end{equation*}
Therefore we have
$(h_z)^2(dz)^2=-Q(F)$ (see \cite[Proposition 1]{ST}). Then
$Q(F)$ has two square roots globally defined
on $M$. We denote by $\sqrt{\phi}\,dz$ the square root of $Q(F)$ so that
\begin{equation*}
 h=-2\,\Re  \int\!\! i\sqrt{\phi}\,dz =2\, \Im \int\!\! \sqrt{\phi}\,dz.
\end{equation*}

The metric induced on $M$ by the immersion $X$ is
\begin{equation*}
 ds^2 = (\sigma \circ F)^2 \Big(\rvert F_z \rvert + \rvert F_{\ov z}
\rvert\Big)^2 \rvert dz\rvert^2.
\end{equation*}



From a result by A. Huber \cite[Theorem 15]{Hub}, we deduce that a minimal end
$E$ of finite total
curvature is
parabolic, so that it can be parametrized by
$U:= \{ z\in \C \mid \ \rvert z \rvert > 1\}$.

\smallskip

Let $X=(F,h) : U \rightarrow   \hi2\times \R$ be a conformal and complete
parametrization of the end $E=X(U)$.
As it is shown in \cite{HR}, the conformal structure of the end is given by the
following
Theorem which relies  the  complex analysis  involved in  the  theory of
minimal surfaces
\cite{HST},  \cite{HR}, \cite{ST}, on the theory of harmonic maps
developed  by
Z. Han, L. Tan, A. Treiberg
and  T. Wan \cite{HTTW} and by Y. N. Minsky in
 \cite{Minsky}.

\vskip 0.5cm

{\bf Theorem \cite{HR}. }{\it  Let $X:=(F,h): M \to \hi2 \times \R$ with finite
total curvature. Then
\begin{enumerate}
\item $M$ is conformally $\bar M - \{p_1,...p_n \}$ a Riemann surface punctured
in a finite number of points.

\item $Q$ is holomorphic on $M$ and extends meromorphically to each puncture.

\item The third coordinate of the unit normal vector $n_3$ tends to zero
uniformly at each puncture.

\item The total curvature is a multiple of $2\pi$, namely
\begin{equation*}
 \int_M (-KdA)= 2 \pi (2-2g -2k-\sum_{i=1}^n m_i),
\end{equation*}
\end{enumerate}
where $m_i$ is defined in Definition  \ref{D.degre} below.
}

\medskip

This theorem contains informations on the geometrical structure of a finite
total curvature end at infinity.

\medskip

By the previous Theorem, $\phi (z)$ extends
meromorphically
to the puncture $z = \infty$. Thus we can write $\phi $ in the following
form


\begin{equation}\label{Forme generale}
 \phi (z) = \big(\sum_{k \geq 1} \frac{a_{-k}}{z^k} + P(z)\big)^2,
\end{equation}
where $P$ is a polynomial function. If we choose
$\sqrt{\phi}=\sum_{k \geq 1} \frac{a_{-k}}{z^k} + P(z)$, then
\begin{equation*}
 h=2\, \Im \int \big(\sum_{k \geq 1} \frac{a_{-k}}{z^k} + P(z)\big) dz.
\end{equation*}

\begin{definition}\label{D.degre}
 {   Let $m \geq 0$ be the degree of $P$. We will say that
{\it $E$ is an end of degree $m$ with respect to the parametrization $X$}.}
\end{definition}



Since the height function is well defined on $U$, the real part of $a_{-1}$ is
zero. Let $\beta\in\R$ such that $a_{-1}=i\beta$.

\begin{lemma}\label{L.Polynome}
 The polynomial function $P$ is not identically zero.
\end{lemma}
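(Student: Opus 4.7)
The plan is a proof by contradiction: assume $P\equiv 0$. Then $\sqrt{\phi}(z)=\sum_{k\geq 1}a_{-k}z^{-k}=O(1/|z|)$ on $U$, so $|\phi|=O(1/|z|^{2})$, and integrating termwise yields
\begin{equation*}
h(z)=2\beta\log|z|+h_{0}(z),
\end{equation*}
where $a_{-1}=i\beta$ (with $\beta\in\R$) and $h_{0}$ is bounded on $U$. One first observes that $\phi\not\equiv 0$: otherwise $h_{z}^{2}=-\phi\equiv 0$ would force $h$ constant and $F$ (anti)holomorphic, so $E=X(U)$ would lie in a horizontal slice $\hi{2}\times\{c\}$, which is totally geodesic and isometric to $\hi{2}$ and hence has infinite total curvature, contradicting the hypothesis.

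I would then split according to whether $\beta$ vanishes. If $\beta=0$, then $h$ is bounded on the end $E$. Since $E$ is a complete minimal end of finite total curvature, it is parabolic by Huber's theorem \cite{Hub} (already invoked in the excerpt to parametrize $E$ by $U$); a bounded harmonic function on a parabolic Riemann surface is constant, so $h$ must be constant on $E$, contradicting $h_{z}^{2}=-\phi\not\equiv 0$. This disposes of the case $\beta=0$ cleanly.

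The genuinely delicate case is $\beta\neq 0$, where $h$ is unbounded but grows only logarithmically, so the parabolicity/Liouville argument is unavailable. Here the plan is to exploit the asymptotic structure: under $P\equiv 0$, in the coordinate $s=\log|z|$ the Hopf-differential metric $|\phi|\,|dz|^{2}$ is asymptotic to the flat cylinder $\beta^{2}(ds^{2}+d\theta^{2})$, whose horizontal cross-sections all have circumference $2\pi|\beta|$. Combining this with the Han--Tan--Treibergs--Wan control on harmonic maps $F:U\to\hi{2}$ with slowly decaying Hopf differential \cite{HTTW}, together with the pointwise bound $\lambda^{2}\geq 2|\phi|$ coming from the induced metric $ds^{2}=\lambda^{2}|dz|^{2}$, one shows that the induced metric on the end is asymptotically comparable to this flat half-cylinder and that the horizontal slices $\{h=c\}\cap E$ close up to loops of bounded length as $|c|\to\infty$. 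One then argues that no complete minimal end of finite total curvature in $\hi{2}\times\R$ can have such an asymptotic cylindrical shape, yielding the contradiction.

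The main obstacle is precisely this case $\beta\neq 0$: the case $\beta=0$ is handled by elementary complex-analytic and potential-theoretic considerations, while $\beta\neq 0$ demands a quantitative comparison between the induced metric and the Hopf-differential flat cylinder together with harmonic-map asymptotics. Morally, a nonzero polynomial $P$ is exactly what forces the horizontal sections of the end to ``open up'' rather than close up to loops of bounded length, which is what a finite-total-curvature minimal end in $\hi{2}\times\R$ must do.
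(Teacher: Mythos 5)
The proposal correctly identifies the dichotomy $\beta=0$ versus $\beta\neq 0$, but both branches have problems.

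\textbf{The case $\beta=0$ contains a genuine error.} You argue that $h$ is bounded on the end, then invoke ``a bounded harmonic function on a parabolic Riemann surface is constant.'' That Liouville principle applies to a parabolic surface \emph{without boundary}; the end $E$ is conformally $U=\{|z|>1\}$, which has the boundary circle $S_1$, and on such a domain bounded harmonic functions are not constant (for instance $\Re(1/z)$ is bounded, harmonic, non-constant on $U$ and tends to $0$ at $\infty$). The height $h$ of the end is perfectly free to take varying values on $\partial E$, so concluding $h\equiv\text{const}$, and hence $\phi\equiv 0$, from boundedness alone is not valid. The paper instead rules out this sub-case by showing that $a_{-1}=0$ and $P\equiv 0$ force $\int_U|\phi|\,dA<\infty$, hence finite area of the minimal end (by \cite{HR}), and then deriving a contradiction from Frensel's lower bound $\operatorname{Area}(B(p,\mu))\geq\pi\mu^2$ applied along a divergent sequence of points. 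If you want to salvage your route you would need some argument in that spirit, not a Liouville theorem.

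\textbf{The case $\beta\neq 0$ is a plan, not a proof.} You correctly observe that the $\phi$-metric is asymptotic to a flat half-cylinder of circumference $2\pi|\beta|$, and that the horizontal sections $\{h=c\}$ have bounded $\phi$-length as $c\to\infty$. But you then assert, without any argument, that no complete minimal end of finite total curvature in $\hi2\times\R$ can have such asymptotically cylindrical geometry; this assertion is precisely the content that has to be proved, and it is not obvious. Moreover the estimates imported from \cite{HTTW} and Minsky are set up in the paper under the standing assumption $m\geq 0$ with $P\not\equiv 0$ (the domains $\wt\Omega_k$, Proposition~\ref{P.Estimate}, etc.), and it is not clear they transfer unchanged to the degenerate case $\phi\sim \beta^2/z^2$. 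The paper's actual argument is shorter and entirely concrete: $h=2\beta\log|z|+o(1)$ grows without bound, so one can choose a sub-annulus of the end whose two boundary curves lie at heights differing by more than $2\pi$, and a maximum-principle comparison with the family of catenoids in $\hi2\times\R$ (which by \cite[Prop.\,5.1]{NSWT} are all contained in slabs of height $<\pi$) gives the contradiction. You should either supply a precise obstruction to the cylindrical picture or replace that step by the catenoid comparison.
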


\begin{proof}
 Assume by contradiction  that $P\equiv 0$.  If $a_{-1}=0$ we obtain that
\begin{equation*}
 \int_{U } \rvert \phi (z) \rvert dA <\infty,
\end{equation*}
and it is shown in  \cite{HR} that the minimal end $E$ would have finite area.
From \cite{Frensel} (Theorem 3 and Remark 4) we  deduce that for any
$p\in E$ and
for any real number 
$\mu < d_E(p,\partial E)$, we have
$Area\big(B(p,\mu)\big)
\geq \pi
\mu^2$, where
$B(p,\mu)$ is the geodesic disk in $E$ centered at $p$, with radius $\mu$.
Considering a suitable diverging sequence  of points $(p_n)$ in  $E$, we
deduce
that $E$ has infinite area. This  gives a contradiction.

\

Assume now that $a_{-1}\not=0.$ Since $a_{-1}=i\beta,$
we obtain (up to an additive constant)
 \begin{equation*}
 h=2\, \Im \int \big(\sum_{k \geq 1} \frac{a_{-k}}{z^k} \big) dz
= 2\beta   \log \rvert z \rvert + o(1),
\end{equation*}
where $o(1)$ is a function depending of $z$ and $o(1) \to 0$ when
$| z| \to \infty$.

 For $R>1,$ let $A_R= \{ R \leq \rvert z \rvert \leq  R^2
\}$.
Thus, $X(A_R)$ is a~ compact and minimal annulus immersed in
$\hi2 \times \R$,   whose boundary has two connected components.
 For $R$ large enough, the vertical distance between those two boundary
components
is larger than $2 \pi,$  while  the family of the
catenoids stays in a slab of height smaller than $\pi$  \cite[Proposition
5.1]{NSWT}.
Therefore, we can compare $X(A_R)$ with the catenoids
and obtain a contradiction by the maximum principle {   since the height of
$X(A_R)$ is greater that $2\pi$}. This concludes the
proof.
\end{proof}

\hspace{.1cm} Let $E$ be an end of degree $m.$
Up to a
change of variable, we can assume that the
coefficient of the leading term of $P$ is one. Then, for suitable
complex number  $a_0,\dots, a_{m-1},$ one has

\begin{equation}\label{Eq.phi}
 P(z)= z^m +a_{m-1} z^{m-1}+ \cdots + a_0 \quad \text{and}\quad
\sqrt{\phi}=z^m (1 + o(1)).
\end{equation}
For any $R>1$, we set $U_R:=\{z\in \C \mid \  \rvert z\rvert > R\}$,
$S_R:=\{z\in \C \mid \  \rvert z\rvert = R\}=\partial U_R$ and
$E_R := X(U_R).$

\

 We set
\begin{equation*}
 W(z):=\int  \sqrt{\phi (z) }\, dz = \int \big(\sum_{k \geq 1}
\frac{a_{-k}}{z^k} + a_0 +
\cdots + z^m\big)\, dz,
\end{equation*}
so that $h(z)=2\,\Im\, W(z)$. If $\beta=0$, the function $W$ is well defined on
$U$. If $\beta \not= 0$, the function $W$ is only locally defined and has a
real period equal to $-2\pi \beta$. We denote by $\theta \in \R$ a
determination of the argument of $z\in U$, therefore
\begin{equation}\label{Eq.Ima}
\frac{1}{2}h(z)= \Im\, W(z) = \beta  \log \rvert z\rvert \,+\,
\frac{\rvert z\rvert^{m+1}}{m+1}\big( \sin (m+1)\theta + o(1)\big)
\end{equation}
and, locally
\begin{equation}\label{Eq.Reel}
\Re \,W(z) = -\beta \theta \,+\,
\frac{\rvert z\rvert^{m+1}}{m+1}\big( \cos (m+1)\theta + o(1)\big).
\end{equation}

\

\bigskip

\centerline{\sc { The  image of $W$ and   the  level sets of ${\rm Im}\, W$} }

\

\begin{definition}
(1) For any $R\geq 1$, a {\em semi-complete curve} in $U_R$ is
the image of a map
$c: [0,+\infty \mathclose[ \rightarrow U_R$ such that
$\rvert c(t)\rvert \xrightarrow[t\to \infty]{} +\infty$.

\noindent
(2)  Let $c: [0,+\infty \mathclose[ \rightarrow U_R$ be a semi-complete curve
 and let $\theta_0$ be
a real number. We say  that
 the image of $c$ has the ray $\{ r e^{i\theta_0},\ r>0\}$
as {\em asymptotic direction}, if
$\theta (t) \xrightarrow[t\to \infty]{}~ \theta_0$, where $\theta(t)$ is the
determination of the argument of $c(t)$ in
$[\theta_0-\pi,\theta_0+~\pi\mathclose[$.
\end{definition}

\

From formula (\ref{Eq.Ima}) above, by a continuity argument we deduce the
following facts.

\

\begin{lemma}\label{L.Description}
{\rm (1)}  There exists $R_0>1$ so that,
 for $k=0,\dots, 2m+~1$ and for any $R\geq R_0$, the function
$\Im\, W$ is strictly monotonous along the pairwise disjoint arcs
\begin{equation*}
A_k(R):=\big\{ z\in S_R,\   \frac{k\pi}{m+1}
-\frac{\pi}{10(m+1)}< \arg (z)
<\frac{k\pi}{m+1} +\frac{\pi}{10(m+1)}\big\}.
\end{equation*}
\noindent
{\rm (2)} For any fixed $C\in \R$ one has

\begin{itemize}
\item If $(z_n)$ is a sequence of complex
numbers such that
$\rvert z_n\rvert \to \infty$ and $\Im\,W(z_n)\equiv C$, then
$\sin \big((m+1)\arg z_n\big) \to 0$.

\item There exists $r(C) >R_0$ such that, for any $R\geq r(C)$,
there are
exactly $2m+2$ points $Re^{i\theta_k}$, $k=0,\dots,2m+1$, on the circle $S_R$
verifying $\Im\, W( Re^{i\theta_k})=C$ and
$Re^{i\theta_k} \in A_k(R) $. Moreover, we have
$\theta_k \xrightarrow[R\to \infty]{} \frac{k}{m+1}\pi$.

\item For any $R\geq r(C)$, the set
$ U_{R} \cap \{\Im\, W(z)=C\}$
is composed of $2m+2$  semi-complete curves
$H_k(C,R)$, $k=0,\dots,2m+1$. Moreover $H_k(C,R)$ has the ray
$\{re^{i\frac{k}{m+1}\pi},\ r>0\}$
as asymptotic direction.
\end{itemize}

\end{lemma}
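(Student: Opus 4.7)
The plan is to derive both (1) and (2) directly from the asymptotic expansion (\ref{Eq.Ima}), using that $W'=\sqrt{\phi}$ has no zero on $U$ (by the initial reduction in this section). Non-vanishing of $W'$ makes $\Im W : U \to \R$ a smooth submersion, so its level sets are smooth $1$-dimensional submanifolds of $U$, which will give the third bullet of (2) once the counting on circles is established.

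For (1), on $S_R$ write $z=Re^{i\theta}$ and use $W'(z)=z^m(1+o(1))$ (a consequence of (\ref{Eq.phi}), uniform in $\theta$) to compute
\begin{equation*}
\frac{d}{d\theta}\Im W(Re^{i\theta})=\Im\bigl(iz\,W'(z)\bigr)=R^{m+1}\bigl(\cos((m+1)\theta)+o(1)\bigr)
\end{equation*}
as $R\to\infty$. On $A_k(R)$ one has $|(m+1)\theta-k\pi|<\pi/10$, hence $|\cos((m+1)\theta)|\geq \cos(\pi/10)>0$. Taking $R_0$ large enough the $o(1)$ correction is dominated, so this derivative has constant nonzero sign on each $A_k(R)$, yielding the strict monotonicity.

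For the first bullet of (2), from $\Im W(z_n)=C$ and (\ref{Eq.Ima}), dividing by $|z_n|^{m+1}/(m+1)$ and letting $|z_n|\to\infty$ forces $\sin((m+1)\arg z_n)\to 0$, since the left-hand side and the $\beta\log|z_n|$ term both go to $0$. For the second bullet, at the endpoints of $A_k(R)$ one has
\begin{equation*}
\Im W(Re^{i\theta})=\pm \frac{R^{m+1}}{m+1}\sin(\pi/10)+\beta\log R+O(R^m),
\end{equation*}
which, for $R\geq r(C)$ large, straddles $C$ with opposite signs. Combined with (1), this produces a unique $Re^{i\theta_k}\in A_k(R)$ with $\Im W=C$. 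Any solution on $S_R$ outside $\bigcup_k A_k(R)$ would, by the first bullet of (2), contradict $\sin((m+1)\arg z)\approx 0$ for $R$ large; hence the $2m+2$ points $Re^{i\theta_k}$ exhaust $S_R\cap\{\Im W=C\}$, and the convergence $\theta_k\to k\pi/(m+1)$ is immediate.

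For the third bullet, (1) and the intermediate value theorem yield, for each $\rho\geq r(C)$, a unique $\theta_k(\rho)$ in the angular range of $A_k(\rho)$ with $\Im W(\rho e^{i\theta_k(\rho)})=C$, and $\rho\mapsto\theta_k(\rho)$ is smooth by the implicit function theorem (the $\theta$-derivative being nonzero by (1)). The $2m+2$ curves $H_k(C,R):=\{\rho e^{i\theta_k(\rho)}:\rho\geq R\}$ are pairwise disjoint, cover $U_R\cap\{\Im W=C\}$ by the counting above, are semi-complete by construction, and have asymptotic direction $k\pi/(m+1)$ by the first bullet. The main subtlety is the uniform control of the error terms in the expansion of $W$; once one unpacks (\ref{Eq.phi}) explicitly by integrating $\sqrt{\phi}$ term by term, everything else reduces to a routine continuity and counting argument.
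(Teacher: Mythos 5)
Your argument is correct and fleshes out exactly the "continuity argument" that the paper only alludes to after formula (\ref{Eq.Ima}): differentiate $\Im W(Re^{i\theta})$ in $\theta$ using $W'=\sqrt{\phi}=z^m(1+o(1))$ to get monotonicity on the $A_k(R)$, use the sign change of $\sin((m+1)\theta)$ at the endpoints plus the intermediate value theorem for existence and the growth of $|\Im W|$ off the arcs for exclusivity, and the implicit function theorem to assemble the level curves $H_k(C,R)$ and read off their asymptotic direction. This is the same approach the paper intends; the only cosmetic slip is that the endpoint values of $\Im W$ on $A_k(R)$ should carry a factor $(-1)^k$ in front of $\sin(\pi/10)$, but since you only use that the two endpoint values have opposite sign, this does not affect the argument.
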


 Let  $k=0,\dots, 2m+1.$ We take $C=0$ in Lemma \ref{L.Description} and
define
$H_k(R):= H_k(0,R),$ $R_1:=r(0)>R_0.$ Moreover  set $\alpha_k:=
\frac{k\pi}{m+1}.$ Then, we deduce the following result.

\begin{coro}\label{C.Ouverts modeles}
For any  $R\geq R_1$, the level set
$ U_{R} \cap \{\Im\, W(z)=0\}$
is composed of  $2m+2$  semi-complete curves
$H_k(R),$ $k=0,\dots,2m+1$, having the following properties
$($see Figure $\ref{F.Hk})$.
\begin{itemize}
 \item Each curve $H_k(R)$ has a unique boundary point, it
belongs to the open arc
$A_k(R)$.
\item   Each curve $H_k(R)$  has the ray $\{re^{i\frac{k\pi}{m+1}},\ r>0\}$
as asymptotic direction.
\item  Each curve $H_k(R)$ is contained in the truncated sector $\Delta_k(R)$
defined as follows
\begin{align*}
 \Delta_k(R)  &:= \big\{  \rvert z \rvert>R \ \text{and}\
 \alpha_k
-\frac{\pi}{10(m+1)}< \arg (z)
< \alpha_k  +\frac{\pi}{10(m+1)}\big\}
\end{align*}
\end{itemize}

\end{coro}

\begin{figure}[!ht]
\centerline{
\subfigure[The curves  $H_k(R)$]{
\includegraphics[scale=0.3]{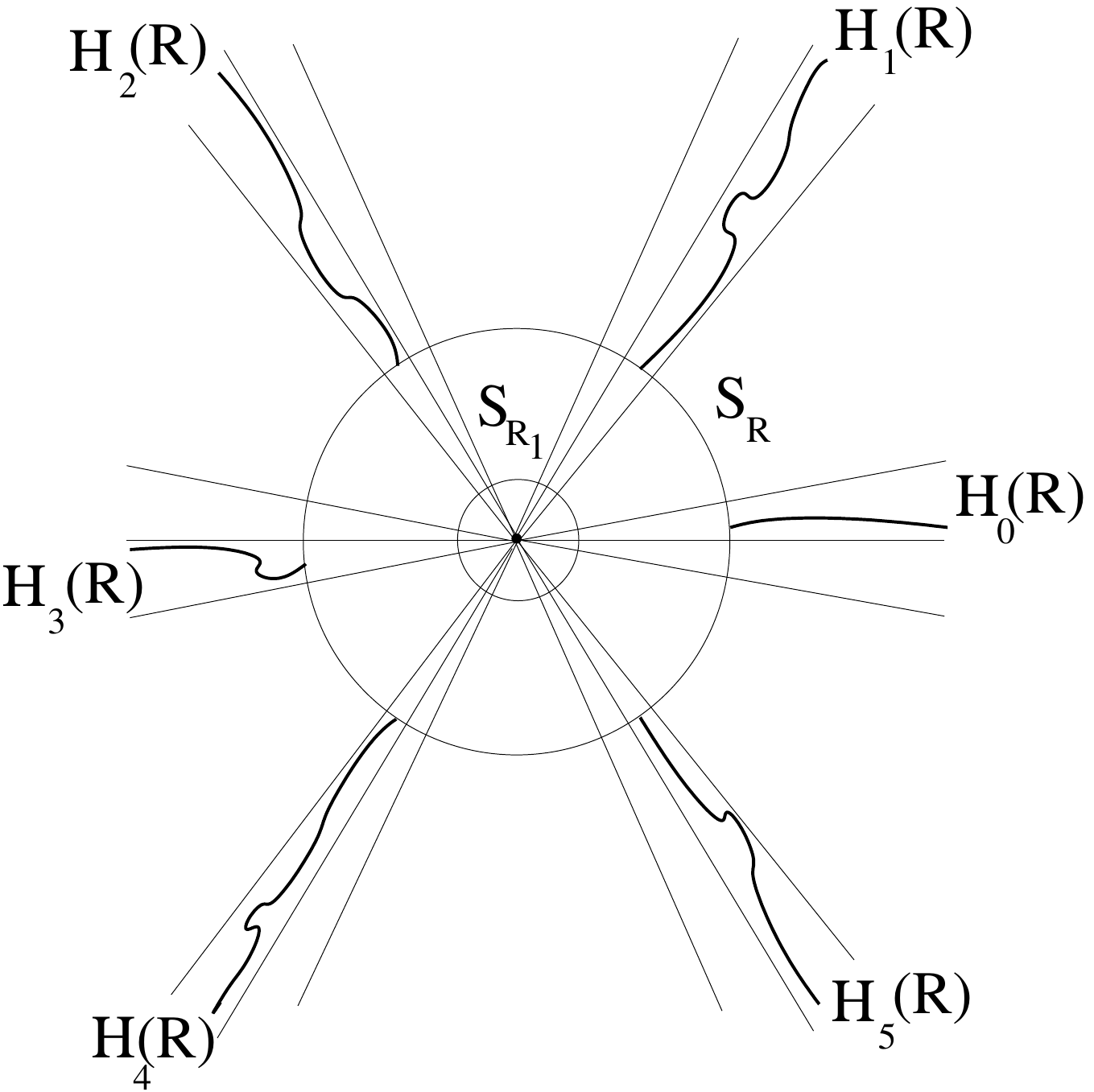}
\label{F.Hk} 
} \hskip18mm
\subfigure[The curves $L_j^+$ and  $L_j^-$]{
\includegraphics[scale=0.3]{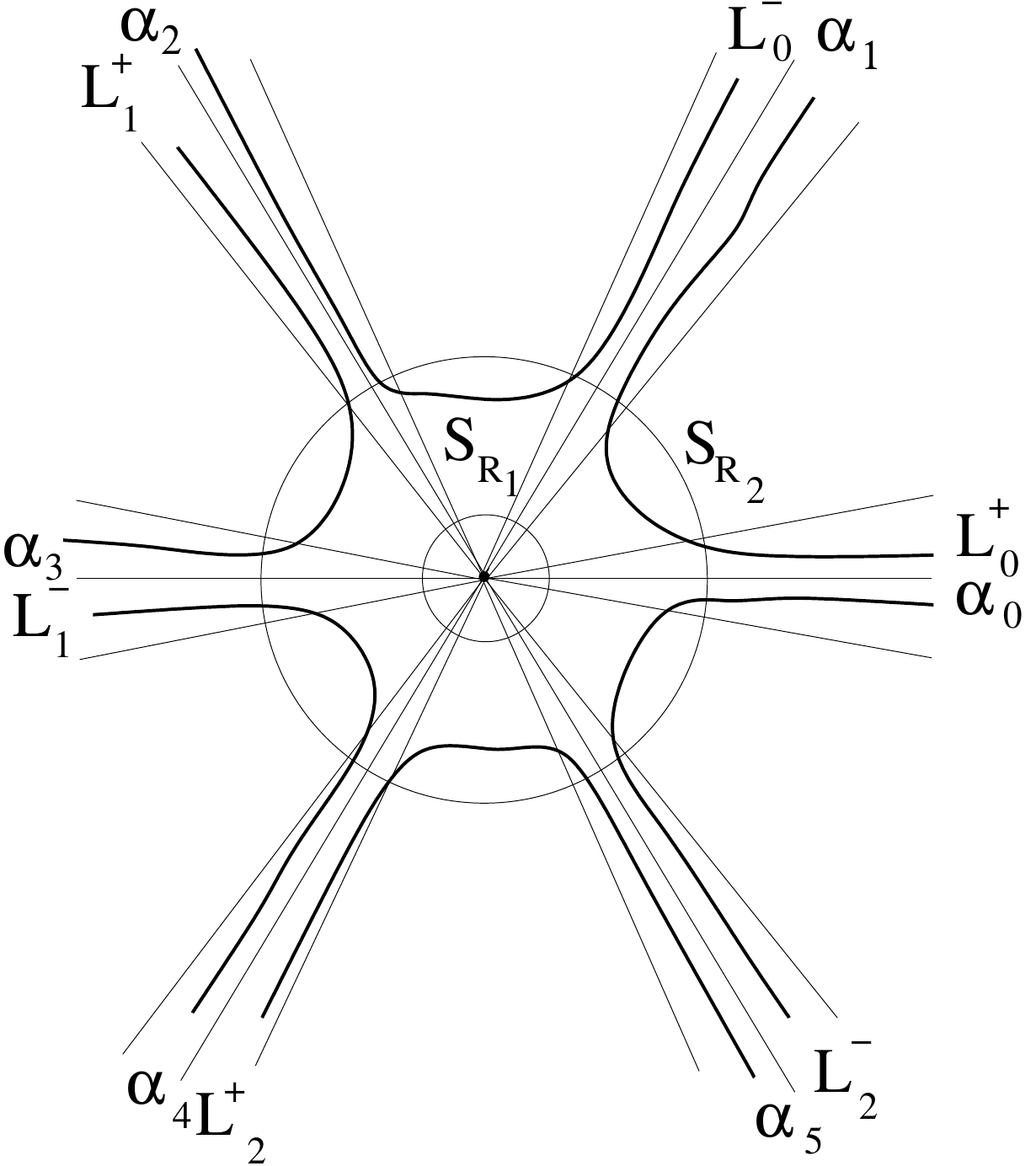}
\label{F.Lk} 
}
}

\caption{The curves $H_k(R)$, $L_j^+$ and  $L_j^-$  for $m=2$}\label{Fig1}
\end{figure}

\hspace{.1cm}  Let us state some consequences of the properties of the harmonic function
${\rm Im}\,W.$

Let $C_0 >0$ be a real number such that
$C_0 > \max \{ \rvert \Im\, W(z) \rvert,\ z\in S_{R_1}\}$.
Let $R_2$ be a real number satisfying $R_2 >r(C_0), r(-C_0), R_1$,
where $r(C_0), r(-C_0)$ and $R_1=r(0)$ are as in Lemma \ref{L.Description}. Note that the set
$ U_{R_1} \cap \{\Im\, W(z)=C_0\}$,
is composed of $m+1$ proper and complete curves without boundary
$L_0^+,\dots,L_{m}^+$ (see Figure \ref{F.Lk}).

For each $j=0,\dots,m$, the level curve
$L_j^+$ is contained in the domain of $\C$ which does not contain $0$
and which is
bounded by $H_{2j}(R_1)$,
$H_{2j+1}(R_1)$ and an arc of $S_{R_1}$ contained in the arc
$\{z\in S_{R_1} \mid \alpha_{2j}-\frac{\pi}{10(m+1)}
< \arg z <   \alpha_{2j+1}+\frac{\pi}{10(m+1)}\}$.

In the same way, the set
$ U_{R_1} \cap \{\Im\, W(z)=-C_0\}$
is composed of $m+1$ proper and complete curves without boundary
$L_0^-,\dots, L_m^-$. Each level curve $L_j^-$ is contained
in the domain of $\C$ which does not contain $0$ and which is bounded by
$H_{2j+1}(R_1)$,
$H_{2j+2}(R_1)$ and an arc of $S_{R_1}$ contained in the arc
$\{z \in S_{R_1} \mid \alpha_{2j+1}-\frac{\pi}{10(m+1)}
< \arg z <  \alpha_{2j+2}+\frac{\pi}{10(m+1)}\}$, were we set
$H_{2m+2}(R_1) := H_0(R_1)$.


 For each level curve $L_j^{\pm}$, we denote by $\mathcal{L}_j^\pm$, the
connected component of $\C\setminus L_j^{\pm}$ which does not contain the
circle $S_{R_1}$.

\begin{figure}[!ht]
\centerline{
\subfigure[$k$ even]{
\includegraphics[scale=0.3]{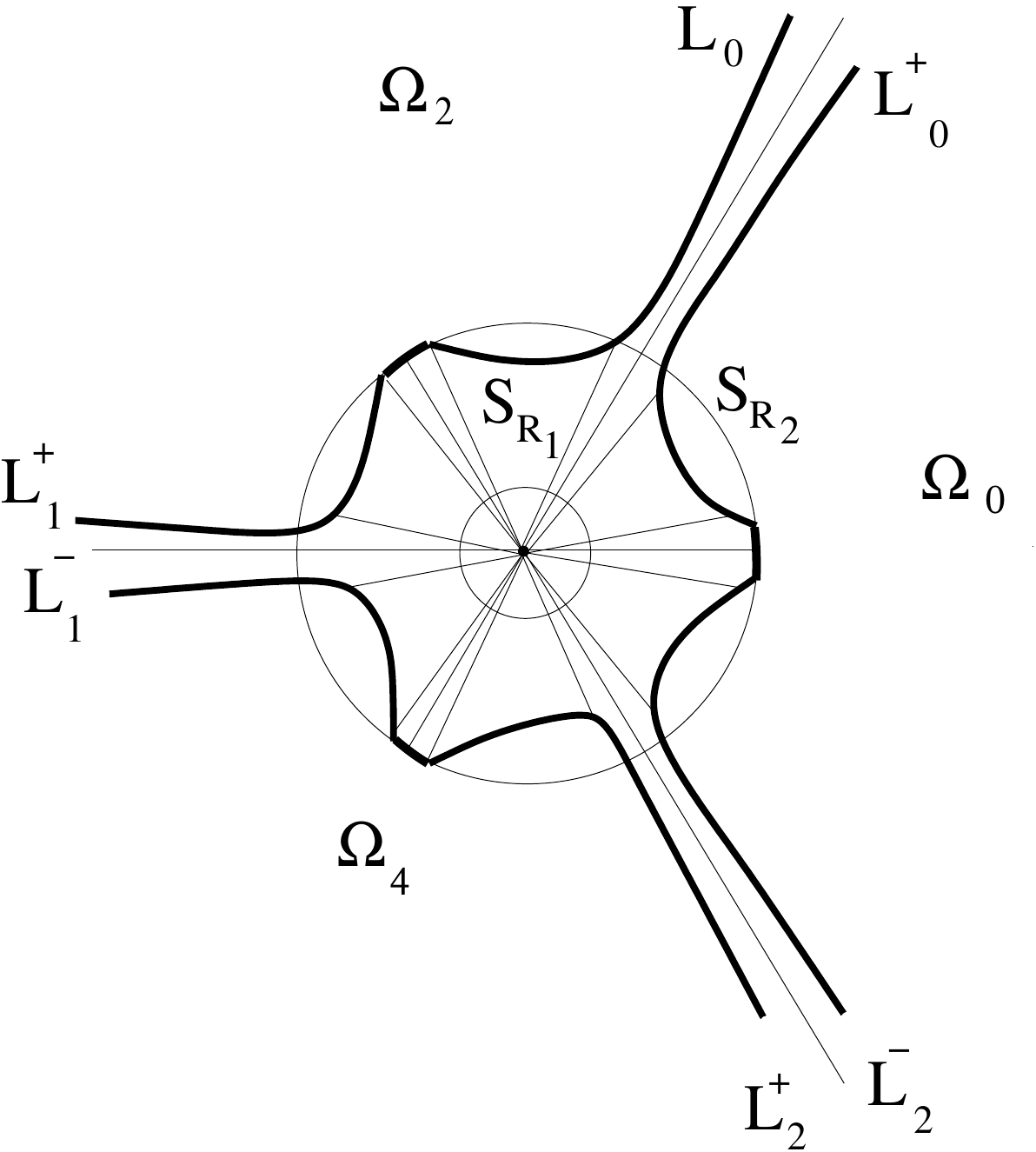}
\label{F.even} 
} \hskip30mm
\subfigure[$k$ odd]{
\includegraphics[scale=0.3]{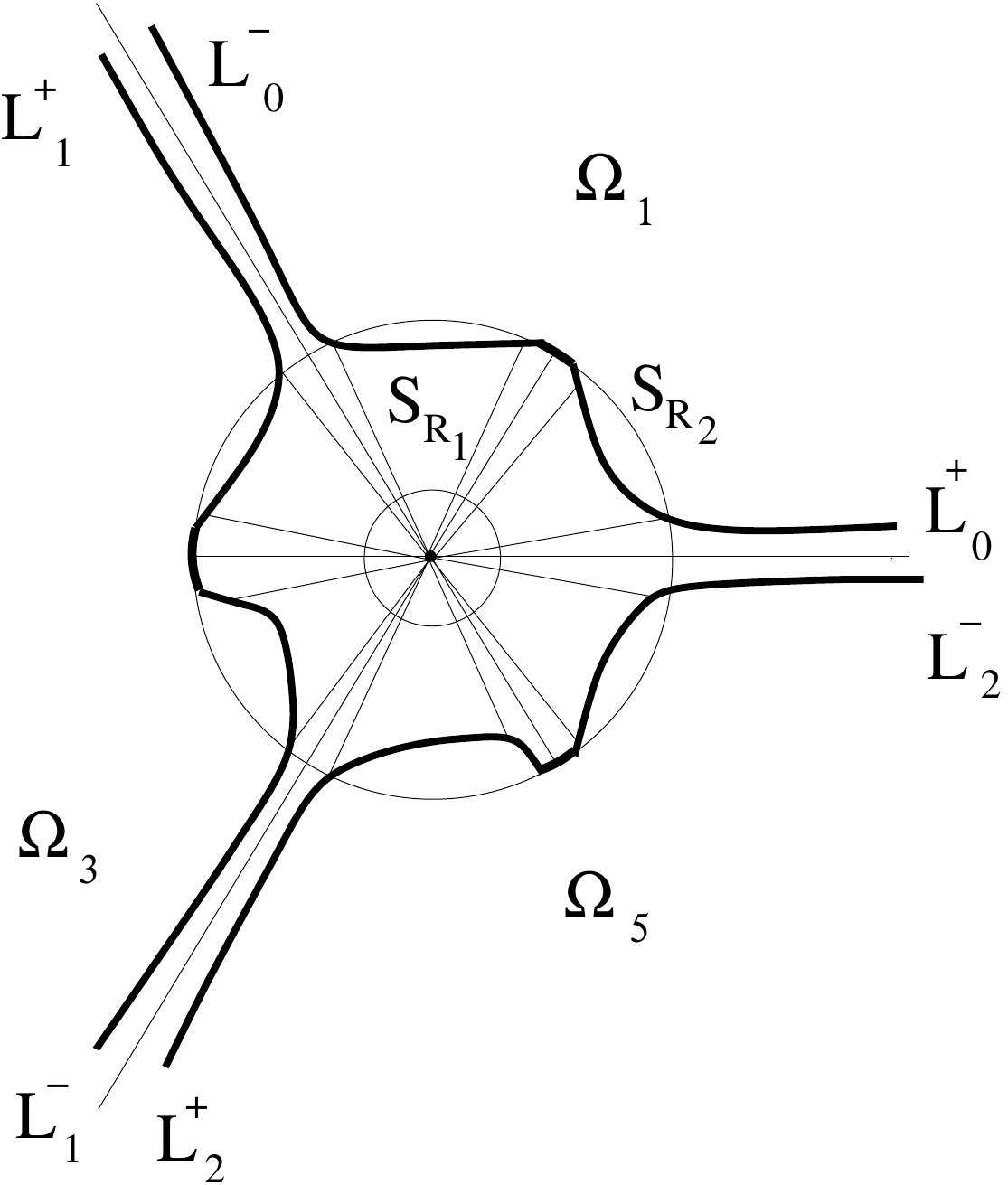}
\label{F.odd} 
}
}
\caption{The domains  $\Omega_k$ for $m=2$ }\label{Fig3}
\end{figure}

\noindent
For each $k=0,\dots,2m+1$ we define the open set $\Omega_k$ setting:
\begin{equation}\label{Eq.Open}
 \Omega_k:=
\begin{cases}
\mathcal{L}_{\frac{k}{2}-1}^- \cup \mathcal{L}_{\frac{k}{2}}^+ \cup
\Delta_k (R_2)  & \text{if}\ k\ \text{is even},\\
\mathcal{L}_{\frac{k-1}{2}}^+ \cup \mathcal{L}_{\frac{k-1}{2}}^- \cup
\Delta_k (R_2)  & \text{if}\ k\ \text{is odd},\\
\end{cases}
\end{equation}
where we set $\mathcal{L}_{-1}^-:=\mathcal{L}_m^-$ (see Figure \ref{Fig3}).

\noindent
By construction, we have that each $\Omega_k$ is a simply connected domain and,
setting $\mathcal{U}:= \cup_{k=0}^{2m+1} \Omega_k$, we have
$U_{R_2} \subset \mathcal{U} \subset U_{R_1}$.
{   Since $\Omega_k$ is simply connected, we can define a continuous
determination of the argument of $z$ in  $\Omega_k$ such that}
\begin{equation*}
 \Omega_k \subset \big\{  \rvert z \rvert>R_1 \ \text{and}\
 \alpha_{k-1}
-\frac{\pi}{10(m+1)}< \arg (z)
< \alpha_{k+1}  +\frac{\pi}{10(m+1)}\big\},
\end{equation*}
(recall that $\alpha_{-1}:=-\pi/(m+1)$ and
$\alpha_{2m+2}:= 2\pi$).

\

We summarize the above construction as follows.


\begin{lemma}\label{L.Level set}
 Let $C$ be a real number,  then the following facts hold.

 \noindent
{\rm (1)}  If $C > C_0$ then
\begin{itemize}
\item the level set  $\{\Im\, W(z)=C\} \cap \mathcal{U}$
is composed of $m+1$ proper and complete curves without boundary
$L_0(C),\dots,L_{m}(C)$ satisfying
$L_j(C)\subset \mathcal{L}_{j}^+ $ and, therefore,
$L_j(C)\subset \Omega_{2j} \cap \Omega _{2j+1}  $, $j=0,\dots, m$.

\item the level set  $\{\Im\, W(z)=-C\} \cap \mathcal{U}$
is composed of $m+1$ proper and complete curves without boundary
$L_0(-C),\dots,L_{m}(-C)$ satisfying
$L_j(-C)\subset \mathcal{L}_{j}^- $ and, therefore,\\
$L_j(-C)\subset \Omega_{2j+1} \cap \Omega _{2j+2}  $,
$j=0,\dots, m$, where $\Omega _{2m+2}:=\Omega_0$.
\end{itemize}

{\rm (2)} If $-C_0 \leq C \leq C_0$ then  the level set
$\{\Im\, W(z)=C\} \cap \mathcal{U}$
is composed of $2m+2$ proper curves
$H_k(C)\subset \Omega_k$, $k=0,\dots,2m+1$, satisfying the same properties as
the level
curves $H_k(R_2)$ in Corollary  $\ref{C.Ouverts modeles}$, with
$R=R_2$.
\end{lemma}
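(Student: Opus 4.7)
The proof will apply Lemma~\ref{L.Description}(2) at the given level $C$ and then track how the resulting asymptotic arcs extend inward into all of $\mathcal{U}$. Two global properties of $\Im\,W$ are used throughout. First, since $\phi$ has no zeros on $U$, the derivative $W'=\sqrt{\phi}$ is nowhere zero, so $\Im\,W$ has no critical points and every level set is a smooth, properly embedded $1$-manifold. Second, $\Im\,W$ is harmonic, and each model domain $\Omega_k$ (as well as each lobe $\mathcal{L}_j^{\pm}$) is simply connected, so by the maximum principle no component of a level set in these domains can be a simple closed curve: indeed such a curve would bound a disk on which $\Im\,W$ attains its maximum or minimum in the interior, forcing $\sqrt{\phi}=W'=0$ there. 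Consequently every component of $\{\Im\,W=C\}\cap\mathcal{U}$ is a properly embedded arc whose ends escape to $\infty$ or to $\partial\mathcal{U}$.

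\smallskip
For part~(2), take $|C|\leqs C_0$. Lemma~\ref{L.Description}(2) applies at $R=R_2$ because $R_2\geqs r(\pm C_0)\geqs r(C)$; it produces $2m+2$ semi-complete arcs, one inside each sector $\Delta_k(R_2)\subset\Omega_k$, with asymptotic direction $\alpha_k$ and unique starting point in $A_k(R_2)\subset S_{R_2}$. Extend each arc backward from $S_{R_2}$ into $\mathcal{U}\setminus U_{R_2}$. The boundary of $\mathcal{U}$ lies in $S_{R_1}\cup\bigcup_j L_j^{\pm}$, on which $\Im\,W$ takes only the values $0$ or $\pm C_0$; since $|C|\leqs C_0$ and since two distinct regular level curves of a local biholomorphism cannot meet, the extended arc cannot cross $L_j^{\pm}$ (the borderline case $C=\pm C_0$ being handled by continuity). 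So each extension stays inside $\Omega_k$ until it terminates either on $S_{R_1}$ or (for $C=0$) reproduces $H_k(R_1)$ of Corollary~\ref{D.Ouverts modeles}, and a total count of components using Lemma~\ref{L.Description} forbids any other connected component of the level set. This yields the $2m+2$ proper curves $H_k(C)\subset\Omega_k$ with the stated properties.

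\smallskip
For part~(1), take $C>C_0$ (the case $C<-C_0$ is symmetric with the $\mathcal{L}_j^-$ replacing the $\mathcal{L}_j^+$). Since $|\Im\,W|\leqs C_0<C$ on $S_{R_1}$ the level set avoids $S_{R_1}$, and the asymptotic formula~(\ref{Eq.Ima}) shows that $\Im\,W>C_0$ is possible only inside the outer lobes $\mathcal{L}_j^+$, so $\{\Im\,W=C\}\cap\mathcal{U}\subset\bigcup_{j=0}^m \mathcal{L}_j^+$. Fix one lobe $\mathcal{L}_j^+$: its topological boundary $L_j^+$ is at height $C_0<C$, so every end of every component of the level set in $\mathcal{L}_j^+$ must escape to $\infty$. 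By Lemma~\ref{L.Description}(2) applied at any sufficiently large $R$, the level set contributes exactly two asymptotic ends to $\mathcal{L}_j^+$, namely in the directions $\alpha_{2j}$ and $\alpha_{2j+1}$. Closed components are excluded as above, each remaining component is an arc with two ends, and both ends must be among these two asymptotic directions; hence there is exactly one component in $\mathcal{L}_j^+$, giving the unique proper complete curve $L_j(C)\subset\mathcal{L}_j^+\subset\Omega_{2j}\cap\Omega_{2j+1}$, and $m+1$ such curves in total. The main subtlety is this last connectedness step: two asymptotic ends of a $1$-manifold need not come from a single component, and the argument genuinely requires the combination of (i) no critical points, (ii) the maximum principle on the simply connected lobe, and (iii) the sharp end count of Lemma~\ref{L.Description}.
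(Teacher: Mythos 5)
The paper states this lemma as a ``summary'' of the preceding construction and supplies no explicit proof: it is meant to follow from Lemma~\ref{L.Description} together with the definitions of the curves $L_j^\pm$, the lobes $\mathcal{L}_j^\pm$, the domains $\Omega_k$, and $\mathcal{U}$. Your argument proceeds along the same lines and fills in precisely the elided details --- that $W'=\sqrt{\phi}$ never vanishes (so $\Im\,W$ is a submersion and its level sets are smooth embedded $1$-manifolds), that the maximum principle on the simply connected $\Omega_k$ and $\mathcal{L}_j^\pm$ rules out closed components, and that Lemma~\ref{L.Description}(2) fixes the number of asymptotic ends --- so this is the paper's implicit route made rigorous rather than a genuinely different one.

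Two points deserve a correction before the argument for part~(2) goes through cleanly. First, the assertion that $\partial\mathcal{U}$ lies in $S_{R_1}\cup\bigcup_j L_j^\pm$, ``on which $\Im\,W$ takes only the values $0$ or $\pm C_0$,'' is inaccurate in two ways: since $U_{R_2}\subset\mathcal{U}$ and the lobes $\mathcal{L}_j^\pm$ are strictly interior to $U_{R_1}$, the circular part of $\partial\mathcal{U}$ consists of arcs of $S_{R_2}$, not $S_{R_1}$; and on those arcs $\Im\,W$ is not confined to $\{0,\pm C_0\}$. What your argument actually uses --- and what is true --- is only that a level curve at height $|C|<C_0$ cannot meet the level curves $L_j^\pm$ (distinct regular levels of a function with nonvanishing gradient are disjoint), which already confines each extended arc to the correct $\Omega_k$; the role of $S_{R_2}$ is then to supply the endpoints via Lemma~\ref{L.Description}(2) and Corollary~\ref{D.Ouverts modeles}. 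Second, applying Lemma~\ref{L.Description}(2) at $R=R_2$ for an arbitrary $-C_0\leq C\leq C_0$ requires $R_2\geq r(C)$. You write $r(\pm C_0)\geq r(C)$, but no monotonicity of $C\mapsto r(C)$ is established in the paper; this gap is in fact shared with the paper's own choice of $R_2$ and is mended by choosing $R_2\geq\sup_{|C|\leq C_0} r(C)$, which a routine compactness and continuity argument from \eqref{Eq.Ima} shows to be finite --- but you should say so rather than assert the unproved inequality. With these two fixes the proof is sound and matches the paper's intended reasoning.
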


\

\begin{prop}\label{P.Restriction}
 For $k=0,\dots,2m+1$,
the restriction of $W$ to $\Omega_k$ is a well defined complex function,
 denoted by $W_k$. Furthermore, $W_k : \Omega_k \rightarrow \C$ is
one-to-one and defines a
conformal diffeomorphism from $\Omega_k$ onto a simply connected domain
$\wt\Omega_k := W_k ( \Omega_k)$
in the $w$ complex plane.
\end{prop}

\begin{proof}
 Since $\Omega_k$ is a simply connected domain which does not contain the
origin, the function $W$ is well defined on $\Omega_k$.

Let $z_1, z_2 \in \Omega_k$ be such that $W_k (z_1) = W_k (z_2)$. We deduce
from
Lemma \ref{L.Level set} that for any $C\in \R$, the level set
$\{\Im\, W_k (z) =C\}$
has a unique connected component in $\Omega_k$. Therefore, $z_1$ and $z_2$
belong
to the same level curve $L\subset \Omega_k$. Since
$W_k^\prime (z)=\sqrt{\phi (z)}$ and $\phi$ does not vanish on $\mathcal{U}$, we
deduce that the function $\Re\, W$ is strictly monotonous on $L$. We conclude
that $z_1=z_2$ as desired.
\end{proof}

\bigskip

 In the $w$-complex plane the domains $\wt \Omega_k, $
$k=0,\dots,2m+1$, defined in Proposition \ref{P.Restriction}, have a nice
structure, that will be crucial in the following.

\begin{coro}\label{C.Description}
Let $k$ be an even number, $k=2j$. Then, $\wt \Omega_k$ is the
complementary of a horizontal half-strip. The non horizontal component of
$\partial \wt \Omega_k$
is  a compact arc that is the image by $W_k$ of the boundary arc of
$\Omega_k$ in  $A_k(R_2)$ joining $L_j^+$ and $L_{j-1}^-$.
Thus,  $\Im\, W$ is strictly monotonous along such non horizontal component
and $\Re\, w$ is bounded from
above by
a real number
$a_k$ for any $w\in \partial  \wt \Omega_k$
$($see Figure $\ref{F.pair})$.

If $k$ is an odd number, then $\wt \Omega_k$ has a similar description, except
that on the half-strip the real part of $w$ is now bounded from below, i.e.
for some real number $b_k$ we have $\Re\, w > b_k$
for any  $w\in \partial \wt \Omega_k$ $($see Figure $\ref{F.impair})$.
\end{coro}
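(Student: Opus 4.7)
The plan is to pull the level-curve foliation of $\Omega_k$ provided by Lemma~\ref{L.Level set} over to the $w$-plane via the conformal diffeomorphism $W_k$ of Proposition~\ref{P.Restriction}: since the level sets $\{\Im W = \mathrm{const}\}$ map to horizontal lines, determining the foliation on both sides fixes $\wt\Omega_k$ completely. I treat the case $k=2j$ even; the case $k$ odd is symmetric and will be addressed at the end.

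First I would identify the boundary $\partial\Omega_k$. From the definition $\Omega_{2j}=\mathcal{L}_{j-1}^{-}\cup\mathcal{L}_{j}^{+}\cup\Delta_{2j}(R_2)$ together with the description of $L_j^{+}$ and $L_{j-1}^{-}$ given after Corollary~\ref{D.Ouverts modeles}, the boundary $\partial\Omega_{2j}$ should split into three pieces: a semi-complete subarc $\ell^{+}\subset L_j^{+}$ with asymptotic direction $\alpha_{2j+1}$, a semi-complete subarc $\ell^{-}\subset L_{j-1}^{-}$ with asymptotic direction $\alpha_{2j-1}$, and the compact arc $\gamma\subset A_k(R_2)$ joining the starting points of $\ell^{\pm}$ on $S_{R_2}$. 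The remaining ends of $L_j^{+}$ and $L_{j-1}^{-}$ (those asymptotic to direction $\alpha_{2j}$) lie in the interior of $\Omega_k$ because they are swallowed by the sector $\Delta_{2j}(R_2)$.

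I would then read off the images of these three pieces under $W_k$. Since $\Im W\equiv C_0$ on $L_j^{+}$ and $\Im W\equiv -C_0$ on $L_{j-1}^{-}$, the sets $W_k(\ell^{\pm})$ lie in the horizontal lines $\{\Im w=\pm C_0\}$. By Lemma~\ref{L.Description}(1), $\Im W$ is strictly monotonous along $A_k(R_2)$ and hence along $\gamma$, so $W_k(\gamma)$ is a compact simple arc on which $\Im w$ runs monotonically from $-C_0$ to $C_0$, meeting $W_k(\ell^{\pm})$ consecutively at its endpoints. The direction in which the rays extend is read off from formula~$(\ref{Eq.Reel})$: along $\ell^{\pm}$, $|z|\to\infty$ with $\arg z\to \alpha_{2j\pm 1}$, so
\[
\Re W(z)\sim\frac{|z|^{m+1}}{m+1}\cos\bigl((2j\pm 1)\pi\bigr)=-\frac{|z|^{m+1}}{m+1}\longrightarrow -\infty.
\]
Thus both $W_k(\ell^{+})$ and $W_k(\ell^{-})$ extend to $\Re w=-\infty$, and setting $a_k:=\max_{w\in W_k(\gamma)}\Re w$ gives $\Re w\leq a_k$ for every $w\in\partial\wt\Omega_k$.

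To verify that $\wt\Omega_k$ is exactly the complement of the half-strip bounded by these three curves, I would fill in the interior via Lemma~\ref{L.Level set}: for each $C$ with $|C|>C_0$ the level set $\{\Im W=C\}\cap\Omega_k$ is a proper complete curve whose $W_k$-image is the entire horizontal line $\{\Im w=C\}$; for each $C$ with $|C|\leq C_0$ it is the semi-complete curve $H_k(C)$, and $(\ref{Eq.Reel})$ gives $\Re W\to +\infty$ along $H_k(C)$ because $\cos((m+1)\alpha_{2j})=+1$, so its image is a horizontal half-line starting on $W_k(\gamma)$ and extending to $+\infty$. This exactly covers the complement of the claimed half-strip. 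For $k$ odd the same scheme applies with the signs flipped: $\cos((k\pm 1)\pi)=+1$ and $\cos(k\pi)=-1$, so the boundary rays now go to $+\infty$ while the interior half-lines go to $-\infty$, placing the half-strip to the right of $W_k(\gamma)$ and giving $\Re w\geq b_k$ on $\partial\wt\Omega_k$. The main obstacle is precisely identifying which portions of the level curves $L_{\bullet}^{\pm}$ lie on $\partial\Omega_k$ versus in the interior (via $\Delta_k(R_2)$); once this boundary structure is fixed, the rest is a direct reading of Lemma~\ref{L.Level set} and formula~$(\ref{Eq.Reel})$.
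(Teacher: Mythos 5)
Your proof is correct and follows the same route the paper intends: the paper disposes of this corollary in one sentence by invoking Lemma~\ref{L.Level set}, and your argument is exactly the expansion of that one-liner, transporting the level-set foliation of $\Im W$ on $\Omega_k$ to horizontal lines and half-lines in $\wt\Omega_k$ and reading off the asymptotics of $\Re W$ from formula~\eqref{Eq.Reel}. The identification of the three boundary pieces, the use of Lemma~\ref{L.Description}(1) to get strict monotonicity of $\Im W$ along the arc in $A_k(R_2)$, and the sign computation $\cos\bigl((m+1)\alpha_{k\pm1}\bigr)=\mp1$ for $k$ even (and the flipped signs for $k$ odd) are all the intended ingredients.

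One small point worth being aware of, though it does not affect the conclusion: the portions of $L_j^{+}$ and $L_{j-1}^{-}$ asymptotic to the direction $\alpha_{2j}$ are not entirely swallowed by $\Delta_{2j}(R_2)$. Each curve $L_j^{\pm}$ crosses $S_{R_2}$ at a point of $A_{2j}(R_2)$ and then dips back into the annulus $R_1<|z|<R_2$ before heading out again toward $\alpha_{2j+1}$ (resp.\ $\alpha_{2j-1}$); that dip is also on $\partial\Omega_k$. This is harmless for your argument, because $\Im W\equiv\pm C_0$ on the whole dip, so its $W_k$-image still lies on the horizontal lines $\{\Im w=\pm C_0\}$; the semi-complete boundary piece $\ell^{\pm}$ should just be understood as the full sub-arc of $L_{\bullet}^{\pm}$ on $\partial\Omega_k$, starting at the point of $A_{2j}(R_2)$ and including the dip. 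With that reading your decomposition, and hence the conclusion $\Re w\leq a_k$ (resp.\ $\Re w\geq b_k$) on $\partial\wt\Omega_k$, is exactly right.
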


 We get a proof of Corollary \ref{C.Description} by  invoking Lemma
\ref{L.Level
set}.

\begin{figure}[!ht]
\centerline{
\subfigure[$k$ even]{
\includegraphics[scale=0.3]{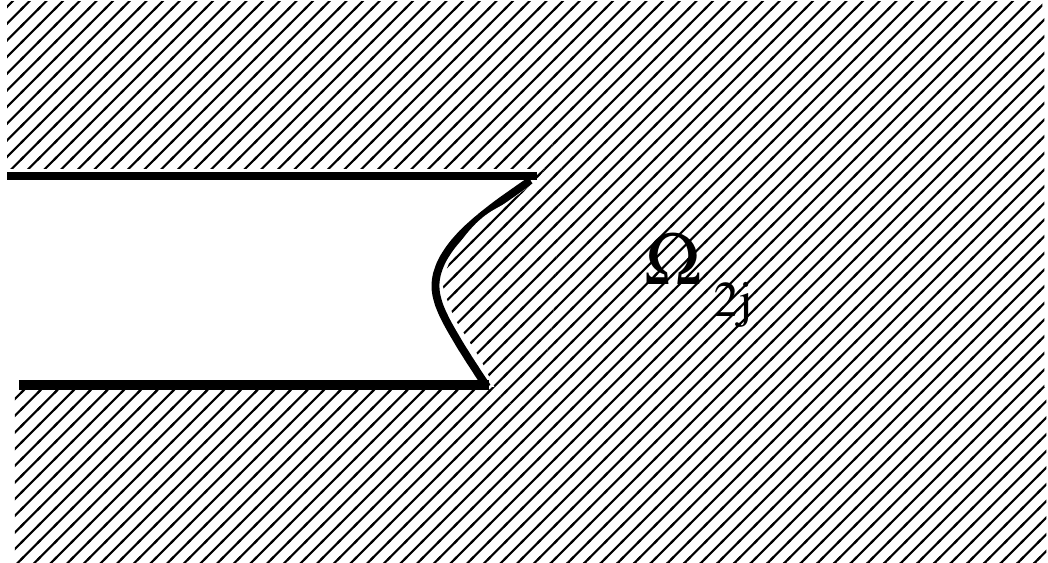}
\label{F.pair} 
} \hskip18mm
\subfigure[$k$ odd]{
\includegraphics[scale=0.3]{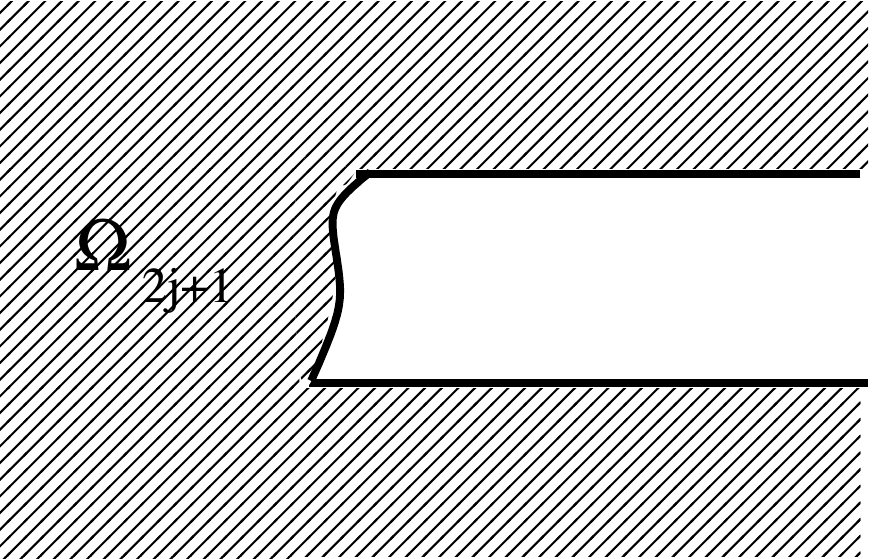}
\label{F.impair} 
}
}
\caption{The domains $\wt \Omega_k$}\label{Fig-omegatilda}
\end{figure}

\bigskip

 By the equalities in \eqref{Eq.phi},
we can take  $R_2$ in \eqref{Eq.Open}  large enough so that
\begin{equation}\label{Eq.Big}
 \frac{1}{2}\rvert z \rvert^m < \sqrt{\rvert \phi (z)\rvert}
<2\rvert z \rvert^m
\end{equation}
when $ \rvert z \rvert \geq R_2$. With this choice, we can
prove the following result.

\begin{lemma}\label{L.Comparaison.2}
 There is a real constant $c_1>0$ such that, for any $z$ satisfying
$ \rvert z \rvert>2R_2$, there exists $k\in \{0,\dots,2m+1\}$ such that
\begin{equation*}
z\in \Omega_k \quad \text{and} \quad
d_\phi (z,\partial \Omega_k)>c_1  \,\rvert z \rvert,
\end{equation*}
where $d_\phi$ stands for
the distance on $\Omega_k$ with respect to the $\phi$-metric
given by
$\rvert \phi (z)\rvert \, \rvert dz\rvert^2$.
\end{lemma}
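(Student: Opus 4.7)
My plan is to pick an index $k$ making $z$ ``central'' inside $\Omega_k$, transport the distance estimate to the $w$-plane via the conformal diffeomorphism $W_k$ of Proposition \ref{P.Restriction}, and reduce matters to an elementary Euclidean calculation.

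Given $z$ with $\rvert z\rvert>2R_2$, I choose $k\in\{0,\dots,2m+1\}$ so that $\rvert \arg z-\alpha_k\rvert\leq \pi/(2(m+1))$, and verify that $z\in\Omega_k$. If $\rvert \arg z-\alpha_k\rvert<\pi/(10(m+1))$, then $z\in\Delta_k(R_2)\subset\Omega_k$ by definition. Otherwise one has $\rvert\sin((m+1)\arg z)\rvert\geq \sin(\pi/10)>0$, so formula \eqref{Eq.Ima} yields $\rvert\Im W(z)\rvert>C_0$ (after enlarging $R_2$ once and for all if necessary), which places $z$ beyond the appropriate level curve $L^\pm$, hence inside $\mathcal{L}_j^+$ or $\mathcal{L}_{j-1}^-$ for $k=2j$ even (and symmetrically for $k$ odd); thus $z\in\Omega_k$ by \eqref{Eq.Open}. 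Since $W_k^\prime(z)=\sqrt{\phi(z)}$, the map $W_k$ is an isometry between the $\phi$-metric on $\Omega_k$ and the Euclidean metric on $\wt\Omega_k$, so
\begin{equation*}
d_\phi\bigl(z,\partial\Omega_k\bigr)=d_{\text{Eucl}}\bigl(W_k(z),\partial\wt\Omega_k\bigr).
\end{equation*}

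By Corollary \ref{C.Description}, for $k$ even $\wt\Omega_k$ is the complement of a horizontal half-strip contained in the Euclidean strip $S:=\{w\mid \Re w\leq a_k,\ \rvert\Im w\rvert\leq C_0\}$ (with the analogous variant $\Re w\geq b_k$ for $k$ odd), so $d_{\text{Eucl}}(W_k(z),\partial\wt\Omega_k)\geq d_{\text{Eucl}}(W_k(z),S)$. Setting $\psi:=(m+1)(\arg z-\alpha_k)\in[-\pi/2,\pi/2]$, the expansions \eqref{Eq.Reel}--\eqref{Eq.Ima} give
\begin{equation*}
\Re W_k(z)=\frac{\rvert z\rvert^{m+1}}{m+1}\big((-1)^k\cos\psi+o(1)\big)+O(1),\qquad \Im W_k(z)=\frac{\rvert z\rvert^{m+1}}{m+1}\big((-1)^k\sin\psi+o(1)\big)+O(\log\rvert z\rvert).
\end{equation*}
Since $\max(\cos\psi,\rvert\sin\psi\rvert)\geq 1/\sqrt{2}$ on $[-\pi/2,\pi/2]$, at least one of $\rvert\Re W_k(z)\rvert,\rvert\Im W_k(z)\rvert$ is of order $\rvert z\rvert^{m+1}/\bigl(\sqrt{2}(m+1)\bigr)$, and the sign carried by $(-1)^k$ combined with $z\in\Omega_k$ forces $W_k(z)$ to the exterior of $S$. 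This gives $d_{\text{Eucl}}(W_k(z),S)\geq c\rvert z\rvert^{m+1}$ for a positive constant $c$, up to bounded and logarithmic corrections; enlarging $R_2$ once more if needed makes this quantity exceed $c_1\rvert z\rvert$ uniformly for $\rvert z\rvert>2R_2$, since $m\geq 0$ ensures $\rvert z\rvert^{m+1}\geq(2R_2)^m\rvert z\rvert$.

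The delicate point is the transitional regime $\psi\approx\pm\pi/2$, where $\cos\psi$ vanishes and only the imaginary part of $W_k(z)$ is of size $\rvert z\rvert^{m+1}$; there the key is that $\rvert\sin\psi\rvert\approx 1$, and the sign of $\sin\psi$ together with the relation $z\in\mathcal{L}_j^\pm$ established in the second step guarantees that $\Im W_k(z)$ has the right sign and magnitude to place $W_k(z)$ outside the strip $S$ in the imaginary direction.
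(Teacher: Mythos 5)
Your proof is correct, but it takes a genuinely different route from the paper's. You transport the whole problem to the $w$-plane via the conformal isometry $W_k$ between $(\Omega_k,\ \rvert\phi\rvert\,\rvert dz\rvert^2)$ and $(\wt\Omega_k,\ \rvert dw\rvert^2)$ of Remark \ref{definition-inverse}, then use the explicit shape of $\wt\Omega_k$ from Corollary \ref{C.Description} together with the asymptotic expansions \eqref{Eq.Ima}--\eqref{Eq.Reel} to show $W_k(z)$ lies outside the strip $S$ at Euclidean distance $\gtrsim \rvert z\rvert^{m+1}$. The paper instead stays entirely in the $z$-plane: it fences $z$ inside a truncated sector $\wh\Omega_k\subset\Omega_k$ bounded by two rays $D_k, D_k'$ and a circular arc, observes that any path to $\partial\Omega_k$ must first cross $\partial\wh\Omega_k$, and bounds $\phi$-length from below by $(R_2^m/2)\cdot(\text{Euclidean length})$ using \eqref{Eq.Big}, then uses elementary trigonometry of sectors. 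Your version buys a sharper estimate ($d_\phi\gtrsim\rvert z\rvert^{m+1}$ rather than merely $\gtrsim\rvert z\rvert$) and is more conceptual, since the geometry of $\wt\Omega_k$ is exactly what drives the later arguments in Proposition \ref{P.courbure}; the paper's version is more elementary, avoids relying on Corollary \ref{C.Description}, and does not require absorbing the $o(1)$ errors in \eqref{Eq.Ima}--\eqref{Eq.Reel} by a further enlargement of $R_2$. Two small points you should make explicit: (a) the identity $d_\phi(z,\partial\Omega_k)=d_{\mathrm{Eucl}}(W_k(z),\partial\wt\Omega_k)$ really gives a priori only ``$\geq$'' (intrinsic versus extrinsic Euclidean distance), which is the inequality you need anyway; and (b) in the final step the sign of $\sin\psi$ is irrelevant, since $S$ is symmetric about $\Im w=0$ — what matters is only $\rvert\Im W_k(z)\rvert>C_0$.
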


\begin{proof}
First assume that $m\geq 1$.  Let $z \in \mathcal{U}$ such that
 $ \rvert z \rvert \geq 2R_2$. We choose the determination of
 the argument of $z$ in the interval $[0,2\pi[$.

 Recall that $\alpha_k=\frac{k\pi}{m+1}$ for
$k=-1,\dots,2m+2$. There exists a unique $k\in
\{0,\dots,2m+1\}$
such that either $(\alpha_{k}+\alpha_{k+1})/2 \leq \arg z < \alpha_{k+1}$
or
$\alpha_{k} \leq \arg z < (\alpha_{k}+\alpha_{k+1})/2$. Without loss of
generality, we can assume that the latter occurs.
Therefore $z\in \Omega_{k}.$
\begin{figure}[!ht]
\centerline{\includegraphics[scale=0.3]{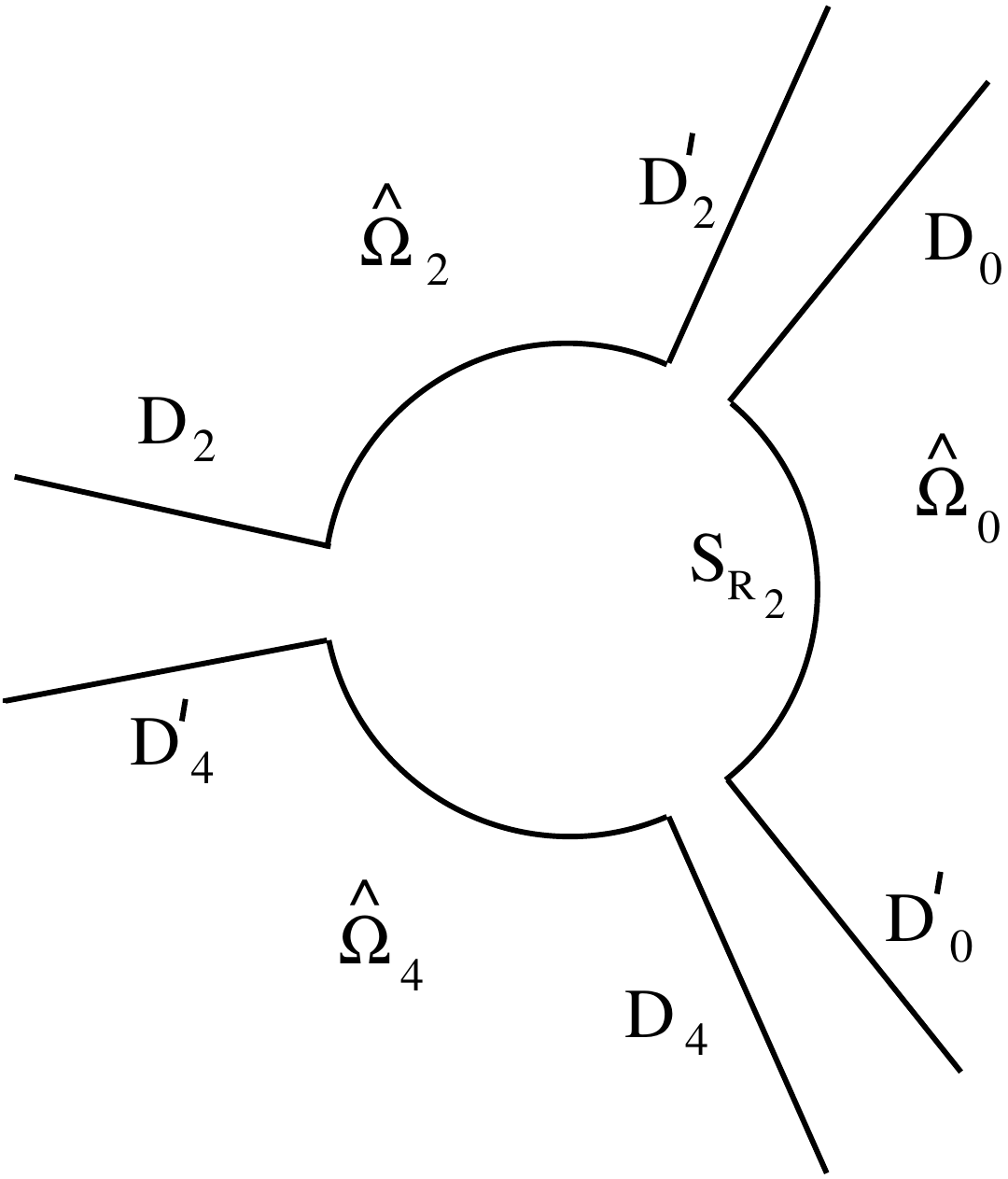}}
\caption{The domains  $\wh \Omega_k$ for $m=2$ and $k$ even}
\label{Fig-chapeau}
\end{figure}

 For any $k=0,\cdots,2m+1$, we
define the following rays:
\begin{align*}
 D_k & := \{ \rho e^{i( \alpha_{k+1} -\pi/10(m+1))},\ \rho\geq R_2\}, \\
 D_k^\prime & := \{ \rho e^{i( \alpha_{k-1} +\pi/10(m+1))},\ \rho\geq R_2\}.
\end{align*}
By assumption, $z$ belongs to the subdomain $\wh \Omega_k$ of $\Omega_k$ bounded
by $D_k$,
$D_k^\prime$ and the arc $\Gamma (R_2)$  of $S_{R_2}$ corresponding to
$ \alpha_{k-1} +\pi/10(m+1) \leq \theta \leq \alpha_{k+1} -\pi/10(m+1)$
(see Figure \ref{Fig-chapeau}).

Then, we have
\begin{equation*}
 d_\phi (z,\partial \Omega_k) \geq \min \big\{ d_\phi(z,D_k),\
d_\phi(z,D_k^\prime),\ d_\phi(z, \Gamma (R_2))\big\}.
\end{equation*}
Let $\gamma :[0,1] \rightarrow \Omega_k$ be any smooth arc satisfying
$\gamma (0)=z$, $\gamma (1) \in D_k$ and $ \rvert \gamma (t)\rvert >R_2$ for
any
$t\in [0,1]$. Denoting by $L_\phi (\gamma)$ the length of $\gamma$ for the
$\phi$-metric and using (\ref{Eq.Big}), we have:
\begin{equation*}
 L_\phi (\gamma) =\int_0^1 \sqrt{\rvert \phi (\gamma(t))\rvert}\,
\rvert \gamma^\prime (t)\rvert\, dt  \geq
\frac{1}{2} \int_0^1 \rvert \gamma (t)\rvert ^m\rvert \gamma^\prime (t)\rvert\,
dt \geq \frac{R_2^m}{2} \int_0^1 \rvert \gamma^\prime (t)\rvert\, dt,
\end{equation*}
so that $ L_\phi (\gamma) \geq (1/2) R_2^m L(\gamma)$, where $L(\gamma)$ is the
Euclidean length of $\gamma$. Since \newline
$4\pi/10(m+1) \leq \alpha_{k+1}-\pi/10(m+1) -\arg z \leq
9\pi/10(m+1)<\pi/2$, \newline
we get
\begin{equation*}
 L(\gamma) > d(z,D_k) >\sin\left( \frac{4\pi}{10(m+1)}\right)\,
\rvert z\rvert
\end{equation*}
where $d(z,D_k)$ stands for the Euclidean distance between $z$ and $D_k$. From
the last inequality,
we
deduce
\begin{equation}\label{Eq.Estimate.D}
 d_\phi(z,D_k) \geq  \frac{R_2^m}{2}  \sin\left( \frac{4\pi}{10(m+1)}\right)\,
\rvert z\rvert.
\end{equation}

\

Let now $\gamma :[0,1] \rightarrow \Omega_k$ be any smooth arc satisfying
$\gamma (0)=z$, $\gamma (1) \in D_k^\prime$ and $ \rvert \gamma (t)\rvert >R_2$
for any $t\in [0,1]$. In the same way, we can show that
\begin{equation*}
  L_\phi (\gamma) \geq \frac{R_2^m}{2}   L(\gamma) \geq \frac{R_2^m}{2}
d(z,D_k^\prime).
\end{equation*}
Since $9\pi/10(m+1) \leq \arg z -\alpha_{k-1}-\pi/10(m+1)\leq 14\pi/10(m+1)$ we
obtain
\begin{equation}\label{Eq.Estimate.D2}
  d_\phi(z,D_k^\prime) \geq \min\big\{ \sin\big( \frac{9\pi}{10(m+1)}\big),
 \sin\big( \frac{14\pi}{10(m+1)}\big)\big\}\, \frac{R_2^m}{2}\,
\rvert z\rvert.
\end{equation}
Finally, let  $\gamma :[0,1] \rightarrow \Omega_k$ be any smooth arc satisfying
$\gamma (0)=z$, $\rvert \gamma (1) \rvert =R_2$ and
 $ \rvert \gamma (t)\rvert >R_2$ for any $t\in [0,1\mathopen[$.
As before we have
\begin{equation*}
L_\phi (\gamma) \geq \frac{R_2^m}{2} L(\gamma) \geq
\frac{R_2^m}{2} \big( \rvert z\rvert -R_2\big).
\end{equation*}
Since $\rvert z\rvert   >2R_2$ we get
\begin{equation}\label{Eq.Estimate.S}
 d_\phi(z, S_{R_2}) > \frac{R_2^m}{4}\, \rvert z\rvert.
\end{equation}
Using estimates (\ref{Eq.Estimate.D}), (\ref{Eq.Estimate.D2}) and
(\ref{Eq.Estimate.S}), we are done in the case $m\geq 1$.

\

Now we consider the case $m=0$. Then, there are only two
domains: $\Omega_0, \Omega_1$, and we have $\alpha_0=0$, $\alpha_1=\pi$ and
$\alpha_2=2\pi$.

Let  $z \in \mathcal{U}$ such that
 $ \rvert z \rvert \geq 2R_2$. For some $k\in \{0,1\}$, we have
either $(\alpha_{k}+\alpha_{k+1})/2 \leq \arg z < \alpha_{k+1}$ or
$\alpha_{k} \leq \arg z <(\alpha_{k}+\alpha_{k+1})/2$. Without loss of
generality, we can assume that the former occurs and that $k=0$, that is:
$\pi/2 \leq \arg z < \pi$ and, therefore, $z\in \Omega_1$.

 We set
\begin{equation*}
 D  := \{ \rho e^{  i\pi/10},\ \rho\geq R_2\}, \qquad
 D^\prime  := \{ \rho e^{ - i\pi/10},\ \rho\geq R_2\}.
\end{equation*}
We have $d(z,D)\geq \rvert z \rvert /2$ and
$d(z,D^\prime)\geq d(z,D)$. Moreover, it can be shown in the same way as in the
case $m\geq 1$, that $d_\phi(z, S_{R_2}) >  \rvert z \rvert /4$. We obtain
that $d_\phi (z,\partial \Omega_k) > \rvert z \rvert /4$, which concludes the
proof.
\end{proof}

\

\begin{remark}\label{R.definition-inverse}
 For $k=0,\dots,2m+1$, the map $W_k : \Omega_k \rightarrow \wt \Omega_k$ is a
conformal diffeomorphism. Since $W_k^\prime (z)=\sqrt{\phi (z)}$, $W_k$
is an isometry when $\Omega_k$ is equipped with the $\phi$-metric
$\rvert \phi (z)\rvert \, \rvert dz\rvert^2$ and $\wt \Omega_k$ is equipped
with the Euclidean metric  $  \rvert dw\rvert^2$.

 We denote by $Z_k : \wt \Omega_k \rightarrow  \Omega_k$ the inverse
function of $W_k$.
\end{remark}

\

\centerline{\sc The  image of the level sets of ${\rm Im}\,W$ by the harmonic
map $F$}

\

\hspace{.1cm}  Let $N:=(n_1,n_2,n_3)$ be the unit normal vector field along the end $E$
such
that
$(X_x,X_y,N)$ has the positive orientation. We get from
\cite[Proposition~ 4]{ST} that
$n_3= \frac{\rvert F_z\rvert -\rvert F_{\ov z}\rvert }
{\rvert F_z\rvert + \rvert F_{\ov z}\rvert }$. We define a function
(possibly with poles)  $\omega$ on $U$ setting  \cite[Formula~ 14 ]{HST}

\begin{equation}\label{F.n3}
  n_3= \tanh \omega.
\end{equation}
For $k=0,\dots,2m+1$, we denote the restriction
of $\omega$ to $\Omega_k$ by $\omega_k $. The function
$\wt \omega_k : \wt \Omega_k \rightarrow \R$ is defined by setting
$\wt \omega_k (w):= (\omega_k \circ Z_k)(w)$ for any $w\in \wt\Omega_k$.

The induced metric $ds^2$ on $U$
reads as
\begin{equation}\label{F.metrique}
 ds^2 = 4\ch^2 (\omega) \, \rvert \phi \rvert \, \rvert dz \rvert^2,
\end{equation}
see \cite[Equation 14]{HST}.

\begin{remark}\label{analytic-rem}
Since $\phi$ has no zero on $U$, the function $\omega$ has no pole
and the tangent plane of $E$ is never horizontal.  This means that the end $E$ is transversal to any slice ${\mathbb H}^2\times \{t\}.$
Thus, the
intersection of  $E$ with any slice is composed of analytic curves.
\end{remark}

\hspace{.1cm} Let us denote by $\Delta_z$ (resp. $\Delta_w$) the laplacian
restricted to $\Omega_k$
(resp. $\wt\Omega_k$) for $k=0,\dots,2m+1$,
with respect to the Euclidean metric $\rvert dz \rvert^2$
(resp. $\rvert dw \rvert^2$). Since
$\Delta_z \omega_k=2\sh(2\omega_k)\, \rvert \phi (z)\rvert$ (see
\cite[Equation 13]{HST}), we
deduce
\begin{equation}\label{Eq.Bochner}
 \Delta_w \wt \omega_k = 2\sh(2\wt \omega_k).
\end{equation}

\bigskip

For any $w\in \wt \Omega_k$ we denote by $d_k(w)$ the Euclidean distance
between $w$ and the boundary of $\wt \Omega_k$.

\

The   following estimate \eqref{Eq.Estimate} can be found in \cite{HR}
 (see also \cite[Lemma 3.3]{Minsky}).

\begin{prop}\label{P.Estimate}
There exists a
constant $K_0>0$ such that for $k=0,\dots,2m+~1$ and for
any $w\in \wt \Omega_k $ with
$d_k(w) >1$, we have
\begin{equation}\label{Eq.Estimate}
 \rvert \wt \omega_k (w) \rvert \leq \frac{K_0}{\ch d_k (w)} < 2K_0 e^{-d_k(w)}.
\end{equation}
Consequently, the tangent planes to the end become vertical at infinity.
\end{prop}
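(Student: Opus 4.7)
The proof proceeds by a barrier/maximum principle comparison in the spirit of Wan and Minsky, exploiting the sinh-Gordon equation $\Delta_w u = 2\sinh(2u)$ for $u := \wt\omega_k$ provided by \eqref{Eq.Bochner}. The argument has a soft \emph{a priori} step that reduces the matter to a quantitative PDE comparison on Euclidean disks.

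First I would establish a uniform bound $|u|\leq M_0$ on $\wt\Omega_k$, with $M_0$ depending only on the end $E$. Since $s\mapsto \sinh(2s)$ has the same sign as $s$, the equation gives $\Delta u > 0$ wherever $u>0$ and $\Delta u<0$ wherever $u<0$, so $u$ admits no interior positive maximum and no interior negative minimum. Consequently $\sup_{\wt\Omega_k} |u|$ is controlled by the behavior of $u$ along $\partial\wt\Omega_k$, which in turn is controlled using the smoothness of $\omega$ along the end (Remark \ref{analytic-rem}) and the structure of $\partial\wt\Omega_k$ given by Corollary \ref{C.Description}.

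Second, I construct a radial barrier. Fix $w_0\in\wt\Omega_k$ with $d_k(w_0)>1$ and set $D:=B(w_0,d_k(w_0))\subset\wt\Omega_k$. Let $\psi\colon[0,d_k(w_0)]\to\R_+$ be the radial solution of
\[
   \psi''(\rho)+\frac{1}{\rho}\,\psi'(\rho)=2\sinh\bigl(2\psi(\rho)\bigr),\qquad \psi'(0)=0,\qquad \psi(d_k(w_0))=M_0.
\]
A direct ODE analysis (first integral, comparison with the 1D equation $\psi_0''=2\sinh(2\psi_0)$, whose positive solutions decay exponentially) yields $\psi(0)\leq K_0/\cosh(d_k(w_0))$ for some $K_0=K_0(M_0)$. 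The function $v(w):=\psi(|w-w_0|)$ then satisfies $\Delta v=2\sinh(2v)$ on $D$ with $v\equiv M_0\geq u$ on $\partial D$; the monotonicity of $s\mapsto\sinh(2s)$ together with the classical maximum principle gives $u\leq v$ on $D$, hence $u(w_0)\leq\psi(0)\leq K_0/\cosh(d_k(w_0))$. Applying the same argument to $-u$ produces the two-sided bound \eqref{Eq.Estimate}.

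The main obstacle is the uniform bound $M_0$, which encodes the behavior of $u$ as one approaches the horizontal half-strip portion of $\partial\wt\Omega_k$, i.e.\ geometrically, the asymptotic approach of the end to its vertical geodesic plane. Once that is secured, the barrier comparison is standard. The right-hand inequality $1/\cosh d < 2e^{-d}$ is immediate from $\cosh d>\tfrac12 e^d$. Finally, the concluding verticality statement follows from $n_3=\tanh\omega$: using Lemma \ref{L.Comparaison.2} combined with the fact (Remark \ref{definition-inverse}) that $W_k$ is an isometry from the $\phi$-metric on $\Omega_k$ to the Euclidean metric on $\wt\Omega_k$, one has $d_k(W_k(z))\to\infty$ as $|z|\to\infty$ in $\Omega_k$, so $\omega_k(z)\to 0$ and $n_3\to 0$.
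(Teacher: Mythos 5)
The paper does not prove Proposition~\ref{P.Estimate}: it simply cites \cite{HR} and \cite[Lemma 3.3]{Minsky} for \eqref{Eq.Estimate}, then observes that the verticality statement follows from \eqref{Eq.Estimate} together with Lemma~\ref{L.Comparaison.2}. So your proposal is an independent proof attempt rather than a reconstruction of the paper's argument, and your handling of the final verticality claim matches the paper exactly.

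However, your Step~1 is a genuine gap, and you partly concede this yourself. The domain $\wt\Omega_k$ is the complement of a horizontal half-strip (Corollary~\ref{C.Description}), hence unbounded, and its boundary contains two infinite horizontal rays. The argument ``$u$ has no interior positive maximum and no interior negative minimum, therefore $\sup_{\wt\Omega_k}|u|$ is controlled by $\partial\wt\Omega_k$'' is not valid on an unbounded domain: without a Phragm\'en--Lindel\"of type hypothesis, $u$ could diverge along the half-strip boundary or at infinity, and Remark~\ref{analytic-rem} gives only local finiteness of $\omega$, not a uniform bound on $\partial\wt\Omega_k$. As written, the crucial constant $M_0$ is never produced.

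The good news is that Step~1 is not needed, and removing it turns your Step~2 into a complete proof in the style of Minsky. Instead of the radial barrier with finite boundary value $M_0$, use the \emph{blow-up} (Keller--Osserman) barrier: for $w_0\in\wt\Omega_k$ with $d:=d_k(w_0)>1$, let $\psi$ be the radial solution of $\psi''+\psi'/\rho=2\sinh(2\psi)$ on $[0,d)$ with $\psi'(0)=0$ and $\psi(\rho)\to+\infty$ as $\rho\to d^{-}$. Since $u:=\wt\omega_k$ is real-valued and smooth on the closed ball $\ov B(w_0,d)\subset\wt\Omega_k$ (because $\phi$ has no zeros there), $\psi-u\to+\infty$ near $\partial B(w_0,d)$; if $u-\psi$ had a positive interior maximum at some $p$, then $\Delta(u-\psi)(p)\le0$ while $\Delta(u-\psi)(p)=2\bigl(\sinh 2u(p)-\sinh 2\psi(p)\bigr)>0$ by monotonicity of $\sinh$, a contradiction. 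Hence $u\le\psi$ on $B(w_0,d)$, and applying the same to $-u$ (which solves the same equation) gives $|u(w_0)|\le\psi(0)$. An ODE analysis of the first integral $(\psi')^2=2(\cosh 2\psi-\cosh 2\psi(0))$ (after dropping the friction term $\psi'/\rho\ge0$, i.e.\ comparing with the one-dimensional equation) shows $\psi(0)\lesssim e^{-d}$, which is the content of \eqref{Eq.Estimate}. This is precisely the mechanism behind \cite[Lemma 3.3]{Minsky}, and no a priori bound $M_0$ is required.

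One more remark: in Step~2 you only explicitly justify the comparison for $u>0$ and then ``apply the same argument to $-u$''. That is fine, but you should note that no sign hypothesis on $\wt\omega_k$ is available here (the sign of $\omega$ encodes which side the normal points, and it can change); the fact that both $u$ and $-u$ solve $\Delta v=2\sinh(2v)$ is exactly what lets you get the two-sided bound $|u(w_0)|\le\psi(0)$ without assuming $u\ge 0$.
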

The last assertion is a consequence of the estimate (\ref{Eq.Estimate}),
Lemma \ref{L.Comparaison.2}, and Remark \ref{R.definition-inverse}.

\hspace{.1cm} We recall that the
energy density of the harmonic function $F$ with respect to the
metric $ \rvert \phi (z)\rvert \, \rvert dz\rvert^2  $ on $U$ and the hyperbolic
metric   on $\hi2$
is the real function defined on $U$ by
\begin{equation*}
 e(z):=\frac{(\sigma \circ F)^2(z)}{\rvert \phi (z)\rvert }
\big(\rvert F_z\rvert^2 + \rvert F_{\ov z}\rvert^2\big)
\end{equation*}
Then one has
\begin{equation*}
e(z)= \frac{\rvert F_z\rvert}{\rvert F_{\ov z}\rvert} +
\frac{\rvert F_{\ov z}\rvert}{\rvert F_{z}\rvert}=2 \ch 2\omega.
\end{equation*}
where the  first equality follows from the definition of $\phi$ and the second equality
follows
from the definition of $\omega$.
Observe that $e^{2\omega}= \frac{\rvert F_z\rvert}{\rvert F_{\ov z}\rvert}$.

For $k=0,\dots,2m+1$ we denote by $\wt F_k$ the harmonic map
$\wt F_k := F \circ Z_k : \wt \Omega_k \rightarrow \hi2$.

 Recall that the relation between  the coordinate $z$ in $\Omega$ and  the coordinate $w$
 in  $\wt \Omega$ is  $w=W(z)$ and
$\frac{d z}{d w}=1/\sqrt{\phi \circ Z}$.  The
energy density  $\wt e$  of $\wt F$ with respect to the Euclidean
metric $\rvert dw\rvert^2  $ on $\wt \Omega$ and the hyperbolic
metric   on $\hi2$
is defined on  $\wt \Omega$
by
\begin{equation*}
 \wt e(w):=(\sigma \circ \wt F)^2(w)
\big(\rvert \wt F_w \rvert^2 + \rvert \wt F_{\ov w}\rvert^2\big)
\end{equation*}
As before, we have
\begin{equation*}
\wt e(w)= \frac{\rvert \wt F_w\rvert}{\rvert \wt F_{\ov w}\rvert} +
\frac{\rvert \wt F_{\ov w}\rvert}{\rvert \wt F_{w}\rvert}=2 \ch 2\wt\omega.
\end{equation*}
Thus $ \wt e(w)= e(z)$, if $z=Z(w)$.

\begin{definition}\label{D.courbure}
 Let  $\kappa : U \rightarrow\R$ be
 defined as follows: for any
$z_0\in U$, $\kappa (z_0)$ is the {\em geodesic curvature} in $\hi2$ (with respect to
the normal orientation induced by the unit normal vector field $N$  on $E$)
of the  connected component of  $F(\{ \Im \, W= \Im\, W(z_0)\})$ passing
through the point
$F(z_0)$.

For any $k=0,\dots, 2m+1$, let
$\wt \kappa:  \wt \Omega_k\rightarrow\R$  be defined by setting $\wt \kappa(w_0)=\kappa(z_0),$ where $w_0=W(z_0)$
\end{definition}

As a consequence of Remark \ref{analytic-rem}, we have that the
function $\kappa$ is analytic

\

\begin{lemma} Fix a number $k\in\{0,\dots2m+1\}$  and
consider  the simply connected domain $\Omega_k$ defined in \eqref{Eq.Open}.
Then, setting $w=u+iv$ on $\wt \Omega_k$,
the pullback  by the
harmonic map $\wt F_k : \wt \Omega_k \rightarrow \hi2$ of the hyperbolic
metric $ \sigma^2 (\xi) \rvert d\xi \rvert ^2$ is given by
\begin{align}\label{Eq.Metrique}
 \wt F_k ^* \big(\sigma^2 (\xi) \rvert d\xi \rvert ^2\big) = 4\ch^2 \wt \omega_k \ du^2 +  4\sh^2 \wt \omega_k \ dv^2 .
\end{align}

Moreover, for any horizontal coordinate curve $\wt \gamma:=\{ v=const\}$ in
$\wt \Omega_k$, the absolute value of the geodesic curvature
$\wt \kappa$
of the curve
$\wt F_k(\wt \gamma)$ in $\hi2$ is given by
\begin{equation}\label{Eq.geodesic-curvature}
 \rvert \wt \kappa (w)  \rvert = \frac{1}{2 \ch \wt \omega_k}
\left\rvert \frac{\partial \wt\omega_k}{\partial v}\right\rvert (w)
\end{equation}
for any $w\in \wt \gamma$.
\end{lemma}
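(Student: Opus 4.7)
The key observation is that $w = W_k(z)$ is the natural ``flat'' coordinate for the Hopf quadratic differential $Q(F) = \phi(z)\, dz^2$: under this change of variable, $Q(F)$ becomes $dw^2$ identically. Indeed, since $Z_k$ is holomorphic, the chain rule gives $\wt F_{k,w} = F_z\, Z_k'(w)$ and $\wt F_{k,\bar w} = F_{\bar z}\, \overline{Z_k'(w)}$, and combined with $Z_k'(w)^2 = 1/\phi(Z_k(w))$ one obtains $\sigma^2(\wt F_k)\,\wt F_{k,w}\,\overline{\wt F_{k,\bar w}} = \phi(Z_k)\,Z_k'(w)^2 = 1$ on $\wt\Omega_k$. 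Equivalently, the transformation rule for a quadratic differential together with $W_k'(z) = \sqrt{\phi(z)}$ yields that the Hopf coefficient in the $w$-chart is $\phi \cdot (dz/dw)^2 = 1$.

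Building on this, I would expand $d\xi \circ d\wt F_k = \wt F_{k,w}\, dw + \wt F_{k,\bar w}\, d\bar w$ and form the Hermitian square. The diagonal terms contribute $\sigma^2(|\wt F_{k,w}|^2 + |\wt F_{k,\bar w}|^2)|dw|^2 = \wt e(w)\,(du^2 + dv^2)$, while the cross terms give $\sigma^2 \wt F_{k,w}\overline{\wt F_{k,\bar w}}\, dw^2 + \text{c.c.} = dw^2 + d\bar w^2 = 2(du^2 - dv^2)$. Using the identification $\wt e(w) = 2\ch 2\wt\omega_k$ already established in the excerpt, the pullback becomes $(2\ch 2\wt\omega_k + 2)du^2 + (2\ch 2\wt\omega_k - 2)dv^2$, which coincides with \eqref{Eq.Metrique} after applying the identities $\ch 2x + 1 = 2\ch^2 x$ and $\ch 2x - 1 = 2\sh^2 x$.

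For the geodesic curvature formula, I would invoke the classical identity $\kappa_g = -\partial_v E / (2 E\sqrt{G})$ for the $v = \text{const}$ coordinate curve in an orthogonal metric $E\, du^2 + G\, dv^2$. Substituting $E = 4\ch^2 \wt\omega_k$ and $G = 4\sh^2 \wt\omega_k$, one computes $\partial_v E = 8 \ch \wt\omega_k\, \sh \wt\omega_k\, \partial_v \wt\omega_k$ and $\sqrt{G} = 2|\sh \wt\omega_k|$; the $\sh\wt\omega_k$ factor cancels, and taking absolute values yields \eqref{Eq.geodesic-curvature}. The only point requiring care is in the first step, where holomorphicity of $Z_k$ is crucial for the two ``wrong'' cross terms in the chain rule to vanish and for the Hopf coefficient in the $w$-chart to reduce to $1$; beyond this the argument is a routine computation.
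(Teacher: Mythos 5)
Your derivation of the pullback metric is correct and is essentially the paper's argument: both compute the three terms $\phi\,dz^2 + \bar\phi\,d\bar z^2 + e|\phi|\,dz\,d\bar z$ of $F^*(\sigma^2 d\xi\,d\bar\xi)$ and then pass to the flat coordinate $w$ where the Hopf coefficient becomes $1$ (the paper changes variables at the end, you work directly in $w$, which is the same computation). Likewise your use of the Kreyszig-type identity $\kappa_g = -E_v/(2E\sqrt{G})$ for the $v=\mathrm{const}$ curve is exactly what the paper does.

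There is, however, a gap in the second part that you should not dismiss as routine. The metric $4\ch^2\wt\omega_k\,du^2 + 4\sh^2\wt\omega_k\,dv^2$ is \emph{degenerate} at any point where $\wt\omega_k(w_0)=0$ (i.e.\ where $\sh\wt\omega_k$ vanishes, or equivalently where the tangent plane of the end is vertical). At such points $\wt F_k$ fails to be a local immersion, $\sqrt{G}=0$, and the formula $\kappa_g=-E_v/(2E\sqrt{G})$ is not valid: the ``cancellation'' of the factor $\sh\wt\omega_k$ is merely formal, and the identification of the geodesic curvature of the coordinate curve in the pulled-back metric with the geodesic curvature of $\wt F_k(\wt\gamma)$ in $\hi2$ is not justified there. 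The paper handles this by a short case analysis: if $\wt\omega_k$ vanishes identically near $w_0$ then the end lies in a vertical geodesic plane and both sides of \eqref{Eq.geodesic-curvature} are zero; otherwise one takes a sequence $w_n\to w_0$ with $\wt\omega_k(w_n)\neq 0$ and extends the identity by continuity, using that $|\wt\kappa|$ is a continuous (indeed analytic) function. You need to supply this step, or at least note that the formula is obtained at points with $\wt\omega_k\neq 0$ and then extended by continuity.
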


\begin{proof}
A straightforward computation shows that
\begin{equation*}
 F_k^* \big(\sigma^2(\xi) d\xi d\ov \xi\big)=
\phi (z) dz^2 + \ov \phi (z) d\ov z ^2 +e(z)\,\rvert \phi (z)\rvert dz d\ov z .
\end{equation*}
Since
$dw=\sqrt{\phi (z)}dz$ and $d\ov w= \sqrt{\ov \phi (z)}d\ov z$, in the
coordinate $w=u+iv$, we have
\begin{equation*}
  \wt F_k^* \big( \sigma^2(\xi) d\xi d\ov \xi\big)=
(\wt e+2)du^2 + (\wt e-2)dv^2
= 4\ch^2 \wt \omega_k \ du^2 +  4\sh^2 \wt \omega_k \ dv^2 .
\end{equation*}

Then equality \eqref{Eq.Metrique} is proved.

Now, let  $w_0\in \wt \gamma$ and assume $\wt\omega_k (w_0)\not=0$.
Then,
by (\ref{Eq.Metrique}),
the pullback by $\wt F_k$ of the hyperbolic metric is a regular metric
in a neighborhood of $w_0$ in  $\wt \Omega_k$. Consequently,  the geodesic
curvature of $\wt F_k(\wt \gamma)$ at $w_0$  is given by
\begin{align*}
\wt \kappa (w_0) &= -\frac{1}{2} \,\frac{1}{4\ch^2 \wt \omega_k}
\,\frac{1}{2\rvert \sh \wt \omega_k \rvert}
\,\frac{\partial }{\partial v}(4\ch^2 \wt \omega_k)(w_0)\\
&=-\frac{1}{2} \frac{1}{\ch \wt \omega_k}
\frac{\sh \wt \omega_k}{\rvert \sh \wt \omega_k \rvert}
 \frac{\partial \wt \omega_k}{\partial v} (w_0),
\end{align*}
(see \cite[Formula (42.8)]{Kreyszig}).
Therefore,  the proof is finished in the case
$\wt \omega_k (w_0)\not=0$.

\

Assume now that $\wt \omega_k (w_0)=0$. If $\wt \omega_k$ vanishes
identically in a neighborhood of $w_0$, then the tangent plane of the minimal
end $E$ is always vertical in a open neighborhood of $X\big(Z_k(w_0)\big)$.
This
means that such  a neighborhood
 is contained in a vertical cylinder in
$\hi2 \times \R$. Since $E$ is minimal, the vertical cylinder is a part of a
vertical geodesic plane and, by analyticity, the whole end $E$ is contained
in the geodesic plane. Consequently the curve $\wt F_k(\wt \gamma)$ is
a part of a geodesic of $\hi2$ and formula
(\ref{Eq.geodesic-curvature})
is trivially satisfied.

If $\wt \omega_k$ is not identically zero in a neighborhood of $w_0$, then there
exists a sequence $(w_n)_{n\in \n^*}$ in $\wt \Omega_k$ converging to $w_0$ such
that $\wt \omega_k (w_n) \not=0$ for any $n>0$.
Since formula  (\ref{Eq.geodesic-curvature})
 holds at any point $w_n$ and  $|\kappa|$ is a continuous function, then
\eqref{Eq.geodesic-curvature} holds also at $w_0$.
\end{proof}

The following Proposition is crucial  in order to understand the geometry of the horizontal sections.

\begin{prop}\label{P.courbure}
Let $z_0\in U$ and let $\kappa (z_0)$ be the geodesic curvature of the level
curve $F(\{\Im\, W(z)=\Im\, W(z_0)\})\subset \hi2.$ We set $R_3=\max \{2R_2, 2/c_1\}$, where $c_1>0$ is the constant given by Lemma
$\ref{L.Comparaison.2}$.
Then,
there exists a constant $c_2 >0$ such that,
for any $z_0\in U_{R_3},$ we have
\begin{equation*}
 \rvert \kappa (z_0) \rvert < c_2 e^{-c_1\rvert z_0\rvert}.
\end{equation*}
\end{prop}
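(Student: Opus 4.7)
The plan is to transfer everything to the $w$-plane via the conformal isometry $W_k$, exploit the exponential smallness of $\tilde\omega_k$ given by Proposition \ref{P.Estimate}, and then convert the $C^0$-bound into a gradient bound using the elliptic equation (\ref{Eq.Bochner}). The curvature formula (\ref{Eq.geodesic-curvature}) then yields the conclusion directly.

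Step 1 (localisation). Fix $z_0 \in U_{R_3}$. Because $|z_0| > R_3 \geq 2R_2$, Lemma \ref{L.Comparaison.2} produces an index $k\in\{0,\dots,2m+1\}$ with $z_0\in\Omega_k$ and $d_\phi(z_0,\partial\Omega_k) > c_1|z_0|$. Set $w_0:=W_k(z_0)\in \wt\Omega_k$. Since $W_k$ is an isometry from $(\Omega_k,|\phi|\,|dz|^2)$ onto $(\wt\Omega_k,|dw|^2)$ (Remark \ref{definition-inverse}), one has $d_k(w_0)>c_1|z_0|$. The choice $R_3\geq 2/c_1$ ensures $d_k(w_0)\geq 2$, so the closed Euclidean unit disk $\overline{B(w_0,1)}$ lies inside $\wt\Omega_k$ and for every $w\in B(w_0,1)$, $d_k(w)\geq c_1|z_0|-1>1$.

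Step 2 ($C^0$-smallness). Proposition \ref{P.Estimate} applied uniformly on $B(w_0,1)$ gives
\begin{equation*}
\sup_{w\in B(w_0,1)}|\wt\omega_k(w)|\leq 2K_0\, e^{-(c_1|z_0|-1)} = 2K_0 e \cdot e^{-c_1|z_0|}.
\end{equation*}
In particular, taking $R_3$ larger if necessary, we may assume $|\wt\omega_k|\leq 1$ on $B(w_0,1)$.

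Step 3 (gradient estimate). On $B(w_0,1)$ we rewrite (\ref{Eq.Bochner}) as a linear equation with bounded coefficient:
\begin{equation*}
\Delta_w\wt\omega_k = a(w)\, \wt\omega_k,\qquad a(w):=\frac{2\sh(2\wt\omega_k(w))}{\wt\omega_k(w)},
\end{equation*}
extended continuously by $4$ where $\wt\omega_k$ vanishes. Since $|\wt\omega_k|\leq 1$ on $B(w_0,1)$, the function $a$ is bounded by a universal constant $A_0$. Standard interior elliptic estimates on the fixed unit disk (for instance, $W^{2,p}$ estimates combined with Sobolev embedding, or Schauder) produce a constant $C=C(A_0)>0$, independent of $z_0$ and $k$, such that
\begin{equation*}
|\nabla\wt\omega_k(w_0)| \leq C\,\sup_{w\in B(w_0,1)}|\wt\omega_k(w)|\leq 2K_0 e C \cdot e^{-c_1|z_0|}.
\end{equation*}

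Step 4 (conclusion). The geodesic curvature formula (\ref{Eq.geodesic-curvature}) together with $\ch\wt\omega_k\geq 1$ gives
\begin{equation*}
|\kappa(z_0)|=|\wt\kappa(w_0)|=\frac{1}{2\ch\wt\omega_k(w_0)}\left|\frac{\partial\wt\omega_k}{\partial v}(w_0)\right|\leq \frac{1}{2}|\nabla\wt\omega_k(w_0)|\leq c_2\, e^{-c_1|z_0|},
\end{equation*}
with $c_2:=K_0 e C$, which is the desired estimate.

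The main obstacle is Step 3: we need to make sure the interior gradient estimate holds with a constant that is uniform in $z_0$ and in the domain $\wt\Omega_k$. This is why we deliberately chose a disk of fixed Euclidean radius $1$ (rather than the maximal disk of radius $d_k(w_0)/2$): it lets us apply a scale-fixed interior estimate with a constant depending only on the $L^\infty$-bound of the coefficient $a$, which in turn is controlled once $|\wt\omega_k|$ is small enough, i.e. once $|z_0|$ is sufficiently large. The choice of radius $1$ is also what produces the exponent $c_1|z_0|$ (as opposed to $c_1|z_0|/2$) in the final estimate.
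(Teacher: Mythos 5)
Your proof is correct and follows essentially the same route as the paper's: transfer to the $w$-plane via the isometry $W_k$, obtain uniform $C^0$-smallness of $\wt\omega_k$ on the unit disk about $w_0$ from Lemma \ref{L.Comparaison.2} and Proposition \ref{P.Estimate}, upgrade to a gradient bound by an interior elliptic estimate on that fixed unit disk, and finish with formula \eqref{Eq.geodesic-curvature}. The only real difference is cosmetic: the paper keeps \eqref{Eq.Bochner} as a Poisson equation $\Delta_w\wt\omega_k=2\sh(2\wt\omega_k)$ and applies the a priori gradient estimate of Gilbarg--Trudinger Theorem~3.9 (bounding the source term $\sh(2\wt\omega_k)$ separately by $\sup|\wt\omega_k|$ using monotonicity of $\sh x/x$), whereas you rewrite the equation as a linear one $\Delta_w\wt\omega_k=a(w)\wt\omega_k$ with $a$ uniformly bounded and invoke $W^{2,p}$/Schauder interior estimates. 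Both give a gradient bound linear in $\sup_{B(w_0,1)}|\wt\omega_k|$; yours is arguably a touch cleaner, the paper's is a touch more elementary, and the resulting constant has the same dependence. One tiny remark: you do not actually need to increase $R_3$ to force $|\wt\omega_k|\le 1$ --- the bound $|\wt\omega_k|<2K_0e^{-1}$ already makes $a$ uniformly bounded, so the estimate holds for the original $R_3$ of the statement as written.
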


\begin{proof}
 Let $z_0 \in U$ be any point such that
$\rvert z_0 \rvert > R_3$. It follows from Lemma \ref{L.Comparaison.2} that
there exists $k\in \{0,\dots,2m+1\}$ such that
\begin{equation*}
 z_0 \in \Omega_k \quad \text{and}\quad
d_\phi (z_0, \partial \Omega_k)>c_1 \rvert z_0\rvert.
\end{equation*}
Setting $w_0 := W(z_0)\in \wt \Omega_k$, we get
$d_k (w_0):=d_k(w_0, \partial \wt\Omega_k) = d_\phi (z_0, \partial \Omega_k)$,
where $d_k$ denotes  the Euclidean distance in $\wt \Omega_k$.
Therefore, we obtain
$d_k(w_0) >2,$ since
$c_1 \rvert z_0\rvert > c_1 R_3 >2$.

Let $\wt D$ be the unit disk in the $w$-complex plane, centered at $w_0$, thus
$\wt D \subset \wt \Omega_k$.
For any $w\in \wt D,$ we denote by $d(w)$ the Euclidean distance between
$w$ and $\partial \wt D$. Recall that the function $\wt \omega_k$ satisfies
Equation  (\ref{Eq.Bochner})  on $\wt \Omega_k$. We restrict $\wt\omega_k$ to $\wt D$ and we apply the interior {\em a-priori} gradient estimate  for the Poisson Equation \cite[Theorem 3.9]{G-T}, then

\begin{equation*}
 \sup_{\wt D} \big( d(w) \rvert \nabla \wt \omega_k\rvert \big)   <
K_1 \big( \sup_{\wt D} \rvert \wt \omega_k\rvert +
2\sup_{\wt D} d^2 (w) \rvert  \sh 2\wt \omega_k\rvert \big),
\end{equation*}
for some constant $K_1 >0$, where $\nabla$ means the Euclidean gradient.

Since $d(w_0)=1$ and $d(w)\leq 1$ for any $w\in \wt D$, we get
\begin{equation*}
  \rvert \nabla \wt \omega_k\rvert (w_0) < K_1 \big( \sup_{\wt D} \rvert \wt
\omega_k\rvert +
2\sup_{\wt D}  \rvert  \sh 2\wt \omega_k\rvert \big)
\end{equation*}
Moreover, since
$d_k( w) \geq d_k (w_0)-1$ for any $w\in \wt D$, we deduce
from Proposition \ref{P.Estimate} that
\begin{equation*}
  \rvert \wt \omega_k (w) \rvert \leq
\frac{K_0}{\ch (d_k (w_0)-1)}
\end{equation*}
for any $w\in \wt D$. Using the inequality $\ch (t-1)>\frac{e^t}{10}$ for any
$t\in \R$ we obtain
\begin{equation*}
  \rvert \nabla \wt \omega_k\rvert (w_0) < K_1 \big(
10 K_0 e^{-d_k(w_0)} + 2 \sh ( 20K_0 e^{-d_k(w_0)})\big).
\end{equation*}
The function $x\mapsto \frac{\sh x}{x}$ is strictly increasing
for $x>0.$ As $d_k(w_0)>2$, then we obtain that
$\sh ( 20K_0 e^{-d_k(w_0)}) < e^2 \sh (20K_0e^{-2})\, e^{-d_k(w_0)}$.
This proves that there exists a constant $\delta >0$ such that
\begin{equation}\label{F.estimee du gradient}
   \rvert \nabla \wt \omega_k\rvert (w_0) < \delta\, e^{-d_k(w_0)}
\end{equation}
for any $w_0\in \wt \Omega_k$ such that $d_k(w_0)>2$.
From formula (\ref{Eq.geodesic-curvature})
and from the previous
computations, setting $c_2:=\delta/2$, we conclude that
\begin{equation*}
 \rvert \wt \kappa (w_0) \rvert < c_2 e^{-d_k(w_0)}.
\end{equation*}
As $\kappa (z_0)=\wt \kappa (w_0)$ and
$d_k(w_0):=d_\phi (z_0,\partial \Omega_k)>c_1 \rvert z_0\rvert$
by Lemma \ref{L.Comparaison.2}, this completes the proof.
\end{proof}

\

In view of Lemma \ref{L.Level set}, let us sum up some notations previously
established in the sequence of Corollary \ref{C.Ouverts modeles}.
For any $C\in \R$ and for
$k=0,\dots,2m+1$, $H_k(C,R)\subset \mathcal{U}\subset U $,
denotes
the
semi-complete level curve of the function $ \Im\, W$ whose
asymptotic direction is $\{re^{i\alpha_k},\ r>0\}$, where
$\alpha_k=k\pi/(m+1)$. That is
\begin{equation*}
 \Im\, W(z)=C\ \text{for any } z\in H_k(C,R)  \text{ and }
\arg z \xrightarrow[\rvert z\rvert \to \infty ]{}    \alpha_k,  \ \ z\in H_k(C,R).
\end{equation*}


For any $C>C_0$, we denote by $L_j(C)$ (resp. $L_j(-C)$), $j=0,\dots,m$,
the proper and complete level curves given by $\{\Im\, W=C\}$ (resp. $\{\Im\,
W=-C\}$).
We have, for any $R,$
$H_{2j}(C,R)\cup H_{2j+1} (C,R)\subset L_j(C)$ and
$H_{2j+1}(-C,R)\cup H_{2j+2} (-C,R)\subset L_j(-C)$, $j=0,\dots,m$, where
$H_{2m+2}(-C,R):=H_0(-C,R)$.

\

The notion of {\em convergence in the $C^1$ topology } in the next Theorem is given in the
statement of Definition \ref{D.C1}.

\begin{theo}\label{T.Description}

\begin{enumerate}
\item \label{description.1} For any  $C\in \R,$ let  $r(C)$ be  defined  as in
Lemma
\ref{L.Description}.
Then, for any   $C\in \R,$  for any $k\in
\{0,\dots,2m+1\}$ and $R>r(C),$ the level curve
$F(H_k(C,R))\subset \hi2$ is a proper semi-complete curve
which has no limit
point in
$\hi2$ and
with a unique asymptotic point in $\pain \hi2$.

\smallskip

\item \label{description.2} For any $C_1,C_2\in \R$ and for any
$k\in \{0,\dots,2m+1\}$,
the level curves
$F(H_k(C_1,R))$, $F(H_k(C_2,R)) \subset \hi2$ are asymptotic.
{   More precisely, for any $\varepsilon >0$ there is a compact subset
$K\subset \hi2$ such that
for any $C$ between $C_1$ and $C_2$
 the level curve $F(H_k(C,R))\setminus K$
remains  in a $\varepsilon$-neighborhood of  $F(H_k(C_1,R))\setminus K$ .}

Consequently, $F(H_k(C_1,R))$ and $F(H_k(C_2,R))$
have the same
asymptotic point
$\theta_k \in~ \pain\hi2$.

\smallskip

\item \label{description.3} For $k=0,\dots,2m+1$, the asymptotic points
$\theta_k$
and
$\theta_{k+1}$ are distinct,
$( \theta_{2m+2}:=~\theta_0)$.

\smallskip

\item \label{description.4} Let $j\in \{0,\dots,m\}$.
\begin{itemize}
\item When $C\to +\infty$, then the proper and complete level curves
$F(L_j(C))\subset \hi2$ converge for the  $C^1$ topology
 to the geodesic in $\hi2$ with asymptotic
boundary $\{ \theta_{2j},  \theta_{2j+1}\}$.

\item  When $C\to +\infty$, then the proper and complete level curves
$F(L_j(-C))\subset \hi2$ converge for the $C^1$ topology to the geodesic in
$\hi2$ with asymptotic
boundary $\{ \theta_{2j+1},  \theta_{2j+2}\}$,
$(\theta_{2m+2}:=~\theta_0)$.
\end{itemize}
\end{enumerate}
\end{theo}

\begin{proof}
Assertion (\ref{description.1}) is a straightforward consequence of the
curvature
estimates given in
Proposition \ref{P.courbure}, together with Proposition \ref{P.limit}.

\
\medskip

Let us prove Assertion (\ref{description.2}). Let
$k\in \{0,\dots,2m+1\}$. By Lemma \ref{L.Level set}, for any $R>r(C_i),$  we have
$H_k (C_i,R)\subset \Omega_k$, $i=1,2$.
From formula (\ref{Eq.Reel}) we deduce that
$\Re\, W (z) \xrightarrow[\rvert z\rvert \to \infty]{}
+ \infty,$ $z\in H_k (C_i,R)$ (resp.  $-\infty$) if $k$ is an even (resp. odd) number,
$i=1,2$.

Assume now that $k$ is even (the argument is analogous in  the other case).
{   Then, by the geometry of the sets  $\wt \Omega_k= W(\Omega_k)$
(see Corollary \ref{C.Description}), setting $\wt p_u:= u+iC_1$ and
$\wt q_u:=u+iC_2$,
for any real number $u >0$ large enough, we have
$\wt p_u \in W\big(H_k(C_1)\big)\subset \wt \Omega_k$ and
$\wt q_u \in W\big(H_k(C_2)\big) \subset \wt \Omega_k$. Moreover,
setting  $\wt\gamma_u := \{ (1-t)\wt p_u +t \wt q_u,\ 0\leq t\leq 1\}$,
we have $\wt\gamma_u \subset \wt \Omega_k$.

Let us set $p_u=Z_k (\wt p_u)$, $q_u=Z_k (\wt q_u)$ and
$\gamma_u=Z_k (\wt \gamma_u)$, where
$Z_k : \wt \Omega_k \rightarrow \Omega_k$ is the inverse function of
$W$
restricted to $\Omega_k$ {\rm as defined in  Remark
\ref{R.definition-inverse}.}
Thus, we have:
\begin{itemize}
\item $\partial \gamma _u =\{p_u,q_u\}$, $p_u\in H_k(C_1,R)$ and
$q_u\in H_k(C_2,R)$.
\item $ \Re\, W(z) =u,$ for any $z\in \gamma_u.$
\end{itemize}
The distance between $ F_k(p_u)$
and $ F_k(q_u)$ in $\hi2$ is
smaller than the length of $ F_k( \gamma_u)$ which, by construction, is
equal to the length of $\wt F_k(\wt \gamma_u)$.

We need to prove the following Claim.

\medskip

 {\em {\bf Claim.} Let $\gamma \subset \gamma _u$ be an
open arc along
which the restriction of $\omega$ to $\Omega_k$ vanishes.
Then, $ F_k(\gamma) \subset \hi2$ is reduced to a single
point.}

\vskip2mm

 Indeed, let
$c: \mathopen]0,1\mathclose[\rightarrow \gamma \subset \gamma_u$ be a smooth
parametrization of $\gamma$. Since $\Re\, W \big(c(t)\big) \equiv u$ for
$0<t<1$, setting $w=W(z)$ and differentiating with respect to $t$, we
get
$\frac{d w}{dz }\, \frac{d c}{dt} + \ov{ \left( \frac{d w}{dz }\right)}\,
  \ov{ \left(\frac{d c}{dt}\right)}=0$. Moreover, as $\omega _k (c(t))\equiv 0$
we have $\rvert F_z\rvert =\rvert F_{\ov z}\rvert$ along $\gamma$. Recall that
$\frac{d w}{dz }=\sqrt{\phi}$ and
$\phi = (\sigma \circ F)^2 F_z \ov F _z$.
 Combining
those  relations, we obtain that
$F_z\, \frac{d c}{dt} + F_{\ov z}\, \ov{ \left(\frac{d c}{dt}\right)}\equiv 0.$
Then $\frac{d}{dt} (F\circ c)(t) \equiv 0$, which proves the Claim.

\

Since the function $\omega$ is real analytic, its restriction to the analytic
arc $\gamma_u$ vanishes identically or has a finite number of zeroes.
Consequently, the length
of $ F_k(\gamma_u)$ in $\hi2$ is equal to the length of $\wt \gamma_u$ with
respect to the pseudo-metric (\ref{Eq.Metrique}) on $\wt \Omega_k$,
denoted by $L_k (\wt \gamma_u)$.
 Corollary \ref{C.Description} yields that for any
$w\in \wt \gamma_u$
and for $u$ large enough we have
\begin{equation*}
d_k(w):=d_k(w,\partial \wt \Omega_k) \geq u-a_k,
\end{equation*}
where, as usual, $d_k$ is the Euclidean metric on $\wt \Omega_k$.
We have
\begin{equation*}
d_{\hi2} \big( F_k(p_u), F_k(q_u)\big) =
d_{\hi2} \big(\wt F_k( \wt p_u),\wt F_k( \wt q_u)\big)  \leq
L_{\hi2} \big( \wt F_k(\wt \gamma_u)\big)= L_k (\wt \gamma_u)
\end{equation*}

 where $d_{\hi2}$ (resp. $L_{\hi2}$) is the distance (resp. the length) in the hyperbolic metric.

As $\Re\, W \equiv u$ along $\wt \gamma_u$, we obtain
\begin{align*}
  L_k (\wt \gamma_u) &= 2\rvert C_1 -C_2\rvert \int _0^1
\rvert \sh \wt \omega_k \big(\wt \gamma_u (t)\big)\rvert \, dt \\
  &\leq 2 \sh \left( \frac{K_0}{\ch (u-a_k)}\right) \rvert C_1 -C_2\rvert,
\end{align*}
where the
inequality comes from formula~ (\ref{Eq.Estimate}). Hence,  we have
\begin{equation*}
 d_{\hi2} \big( F_k(p_u), F_k(q_u)\big) \to 0 \quad
\text{when}\quad u\to +\infty.
\end{equation*}
This completes the proof of
Assertion (\ref{description.2}).}
\

\medskip

Let us prove Assertion (\ref{description.3}). Assume, for instance, that
$\theta_0= \theta_1$. Then, for any $C>C_0$,  there exists a complete level
curve $L_0(C)\subset \Omega_0$ such that $F(L_0(C)) \subset \hi2$ is a proper
and
complete curve with
$\pain F(L_0(C))=\{\theta_0\}$.  We deduce
from Proposition \ref{P.courbure}
and formula (\ref{Eq.Ima})
that for $C$ large enough, {\rm the absolute value} of the geodesic curvature of
$F(L_0(C)) \subset \hi2$ is smaller than 1/4.
Let $\Gamma \subset \hi2$ be any complete geodesic
 such that $\theta_0\in \pain \Gamma$. Then, we obtain a
contradiction with the maximum principle, comparing  $F(L_0(C))$ with the
family of complete curves $\gamma_p,\ p\in \Gamma$, orthogonal to $\Gamma$
at $p$, with constant curvature 1/2, and such that $\theta_0$ belongs to the
asymptotic boundary of the mean convex component of $\hi2 \setminus \gamma_p$.
This completes the proof of
Assertion (\ref{description.3}).

\medskip

Assertion (\ref{description.4}) is a straightforward consequence of
Assertion (\ref{description.3}),
Propositions \ref{P.courbure} and  \ref{P.C1}.
\end{proof}

\begin{remark}
(1) We deduce from Theorem \ref{T.Description} that the asymptotic boundary of
$F(U)$ is composed of exactly $2m+2$ points, counting  with multiplicity. In
particular, if $m=0$ then  $\pain F(U)$ has exactly two distinct points.

(2) Observe that the $2m+2$ asymptotic points $\theta_0,\dots,\theta_{2m+1}$
of $F(U)$ need not to be distinct. They even not need to be well ordered as we
can see in some examples found by  J. Pyo and M. Rodriguez \cite{P-R}. \newline
We can construct  {\em artificial} examples for which the asymptotic points are not
distinct: just consider the covering maps $\psi_n : U\rightarrow U$, $n \geq
2$,
 defined by $\psi_n (z)=z^n$, and the minimal ends
$X_n:= X\circ \psi_n : U \rightarrow \hi2 \times \R$.

\end{remark}

We will give an alternative  geometric interpretation of Theorem \ref{T.Description} in terms of polygonal curves.
In order to to this, we need some definitions.

The asymptotic boundary  $\partial_{\infty}( \hi2 \times \R)$ is topologically equivalent to the following open  cylinder joint with two closed
disks:

$$ {\mathcal C}=\{\s ^1 \times (-1,1)\}\cup D(+1) \cup D (-1)$$
where $D(-1)=\{ u \in \C; |u| \leq 1\} \times \{-1\}$ and $D(+1)=\{ u \in \C; |u| \leq 1\} \times \{+1\}$.
We identify   $int (D (+1))$ and $int (D (-1))$ with the hyperbolic plane.
Let  $t : (-1,+1) \to \R$ be a homeomorphism.  For any $y\in(-1,1),$  we identify
 $\s^1 \times \{y\}$ with the asymptotic boundary of  $ \hi2 \times \{t(y)\}.$
The sets $int (D (+1))$ and $int (D (-1))$ represent  the closure of vertical geodesics $\{p\}\times \R,$ $p\in\hi2.$

\begin{definition}\label{D.Polygone} We say that $\mathcal P$ is a {\em closed
polygonal curve}  if it is a closed curve contained in $\mathcal C,$  that is
union of a finite number of hyperbolic
geodesics in $int (D (+1))$ and $int (D (+1)),$ jointed by vertical segment in  $\s ^1 \times (-1,1),$ joint with their endpoints.
\end{definition}

Notice that the closed polygonal curve $\mathcal P$ may happen  to be not embedded and some of its  sides may have multiplicity greater than one.

Now, we give the promised alternative interpretation of Theorem \ref{T.Description}.

\begin{prop}\label{P.Polygone}
Let $X:=(F,h): U \to \hi2 \times \R$ be a properly immersed finite total curvature end $E=X(U)$, then
$\partial _{\infty} E$ can be identified with a closed  polygonal curve $\mathcal P$ in $\mathcal C,$ where:

\begin{itemize}

\item A geodesic $\gamma_{+1}$  of ${\mathcal P}$  contained  in  $D (+1)$
means that the end $E$ contains a topological half plane which is
asymptotic to $\gamma_{+1} \times \R_+$ when $h$ tends
to  $+\infty.$

\item A geodesic $\gamma_{-1} $ of ${\mathcal P}$  contained  in $D (-1)$ means that the
end
$E$ contains a topological half plane which is asymptotic to $\gamma_{-1} \times \R_-$ when $h$ tends
to  $-\infty.$

\item A vertical segment $\{ p \} \times (-1,+1)$ of $ {\mathcal P}$ means
that   $p\times \r$ belongs  to the asymptotic boundary of $E.$
\end{itemize}

\end{prop}

An interesting problem is to determine the correspondence between the space of closed polygonal curve
$\mathcal P$ and the set of finite total curvature ends. We would like  to understand the relation between
the geometry of the end and the geometry of
$\mathcal P$.

\

We remark that embedded ends can be only observed when ${\mathcal P}$ is an embedded  polygonal
curve.  Properties of ${\mathcal P}$ can be derived from its projection
$\pi ({\mathcal P})$  on a horizontal
hyperbolic plane:
$$\pi : {\mathcal P} \to \hi2\times \{0\}.$$

 M. Rodriguez and J. Pyo has constructed an interesting example of
 a properly embedded minimal surface in $\hi2\times\r.$
 The example is simply connected so that it has only one end.
The polygonal  curve ${\mathcal P}$ associated to the end is embedded with non embedded projection $\pi ( {\mathcal P} ).$
The end  has finite total curvature, contains a vertical geodesic
 $\{p \} \times \R,$  and it is  not a graph.

\

\hspace{.1cm} {   Let us state some results that have an independent
interest in this theory.}

\begin{theo}\label{T.propre}
 Let  be a complete minimal
end with finite total curvature. Then, $E$ is properly immersed.
\end{theo}

\begin{proof}
Let $m\in \n$ be the degree of the end $E$ (see  Definition \ref{D.degre}).
Let $(p_n)$ be a sequence in $U$ such that
$\rvert p_n\rvert \to +\infty$. We want to show that $(X(p_n))$ is not a
bounded sequence in $\hi2 \times \R$. Up to choose a subsequence, we can
assume that there is $k\in \{0,\cdots,2m+1\}$ such that, for any $n,$ we have
$p_n \in \Omega_k.$ Recall that $h(z) =2\,\Im\, W(z)$ for any $z$ in $U$.

 If $h(p_n)\to \infty$ we are done.

 Assume that  the sequence  $(h(p_n))$ of real number is bounded.
Thus,  up to considering a subsequence, there exists a real number $C_1$
such that $h(p_n)\to C_1$. We set
$S(C_1) :=\{ z\in \Omega_k \mid  h(z)=C_1\}$. Thus, $S(C_1)$ is either a
complete curve
or a semi-complete curve. As in the proof of Theorem \ref{T.Description}, we
can construct a sequence $(q_n)$ in $\Omega_k$ such that
\begin{equation*}
 \forall n\in \n, \quad q_n \in S(C_1)  \quad \text{and}\quad
d_{\hi2}\big(F(p_n), F(q_n)\big) \to 0.
\end{equation*}
Since $F(S(C_1))$ has no limit point in $\hi2$ and has an asymptotic point
$p_\infty \in \pain \hi2$ (see (1) in Theorem \ref{T.Description}), we get that
$F(q_n) \to p_\infty$ and consequently  $F( p_n) \to p_\infty$.
Therefore, the sequence $X(p_n)$ is not bounded, which concludes the proof.
\end{proof}

\bigskip

\begin{theo} \label{T.estimees de courbure}
{   Let $X:= (F,h) : U \rightarrow E\subset \hi2 \times \R$
be a minimal, complete end with finite total
curvature. Then, there exists a constant $c_3$ such
that, for any $p\in E$,
we have
\begin{equation}\label{F.courbure sur un bout}
 \rvert K_E (p) \rvert \leq c_3 \, e^{-d_E(p,\partial E)},
\end{equation}
where $K_E$ denotes the intrinsic Gauss curvature and $d_E(\cdot, \partial E)$
stands for the intrinsic distance on $E$.}
\end{theo}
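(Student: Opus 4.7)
The plan is to combine the exponential decay estimates for $\wt\omega_k$ and its gradient (Proposition~\ref{P.Estimate} and the gradient bound \eqref{F.estimee du gradient}) with a geometric comparison between the Euclidean distances $d_k$ in the $w$-charts and the intrinsic distance $d_E(\cdot,\partial E)$ on the end.

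I first compute $K_E$ in a chart $\wt\Omega_k$. From \eqref{F.metrique} and the change of variable $w=W_k(z)$, the induced metric on $E$ reads $ds^2=4\cosh^2\wt\omega_k\,|dw|^2$. Writing $\rho=2\cosh\wt\omega_k$, the identity $K=-\rho^{-2}\Delta_w\log\rho$ together with the Bochner equation \eqref{Eq.Bochner} yields, by a direct computation,
$$K_E=-\tanh^2\wt\omega_k-\frac{|\nabla\wt\omega_k|^2}{4\cosh^4\wt\omega_k},$$
so in particular $|K_E|\leq\wt\omega_k^2+\tfrac14|\nabla\wt\omega_k|^2$. Now for $p=X(z_0)\in E$ with $|z_0|>R_3$, Lemma~\ref{L.Comparaison.2} produces $k\in\{0,\dots,2m+1\}$ with $z_0\in\Omega_k$ and $d_k(w_0)>c_1|z_0|>2$, where $w_0=W_k(z_0)$. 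Inserting the bound on $|\wt\omega_k(w_0)|$ from Proposition~\ref{P.Estimate} and the bound on $|\nabla\wt\omega_k|(w_0)$ from \eqref{F.estimee du gradient}, there is a constant $c>0$ depending only on $E$ such that
$$|K_E(p)|\leq c\,e^{-2d_k(w_0)}.$$

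To convert this into exponential decay in $d_E$, the goal is to establish $d_E(p,\partial E)\leq 2d_k(w_0)+c'$ for a constant $c'$ independent of $p$, since combining with the previous step gives $|K_E(p)|\leq c\,e^{c'}\,e^{-d_E(p,\partial E)}$. Let $\Gamma_k\subset\partial\wt\Omega_k$ denote the compact arc that is the $W_k$-image of $\partial E\cap\Omega_k$, and let $w_*$ be the point of $\Gamma_k$ at minimum Euclidean distance from $w_0$. By Corollary~\ref{C.Description}, $\wt\Omega_k$ is the complement of a horizontal half-strip whose only non-horizontal boundary piece is $\Gamma_k$, and a direct check shows that the Euclidean segment $[w_0,w_*]$ stays inside $\wt\Omega_k$; using Proposition~\ref{P.Estimate} to bound $\cosh\wt\omega_k-1$ along this segment, its induced length is at most $2|w_0-w_*|+O(1)$. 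When the point of $\partial\wt\Omega_k$ closest to $w_0$ lies on $\Gamma_k$, we have $|w_0-w_*|=d_k(w_0)$ and the desired inequality follows. In the opposite case, $z_0$ lies near a level curve $L_j^\pm$ that bounds $\Omega_k$, and by construction \eqref{Eq.Open} the point $z_0$ also belongs to an adjacent chart $\Omega_{k'}$ in which the configuration is favourable; we then run the same argument in $\wt\Omega_{k'}$.

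For the compact region corresponding to $\{|z|\leq R_3\}$ both $|K_E|$ and $d_E(\cdot,\partial E)$ are uniformly bounded, so the inequality holds there after enlarging $c_3$ if necessary. The main technical obstacle is the chart-switching step in the comparison: verifying that at every point of $\mathcal{U}$ at least one chart $\Omega_{k'}$ places $w_0$ in the favourable configuration with respect to its compact arc $\Gamma_{k'}$. This relies essentially on the overlap structure of the decomposition $\mathcal{U}=\bigcup_{k=0}^{2m+1}\Omega_k$ along the level curves $L_j^\pm$ and on the change of branch of $W$ between neighbouring charts.
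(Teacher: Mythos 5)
Your curvature computation in the $w$-chart and the bound $|K_E(p)|\leq c\,e^{-2d_k(w_0)}$ are exactly what the paper proves (the paper writes $K_{\wt ds^2}=-\tfrac{\tanh\wt\omega_k}{4\ch^2\wt\omega_k}\Delta\wt\omega_k-\tfrac{|\nabla\wt\omega_k|^2}{4\ch^4\wt\omega_k}$, which is your expression after inserting \eqref{Eq.Bochner}). You also correctly identify that converting this into decay in $d_E$ requires an \emph{upper} bound $d_E(p,\partial E)\leq 2d_k(w_0)+c'$; the paper only records the lower bound $d_E(p,\partial E)\geq 2d_k(w_0)$ coming from $\wt ds^2\geq 4|dw|^2$, which by itself does not close the implication, so your instinct about the needed direction is the right one and is actually more careful than what is printed.

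Where the proposal has a genuine gap is the proof of this upper bound. A small slip first: $\partial E\cap\Omega_k=\emptyset$ (since $\partial E=X(S_1)$ while $\Omega_k\subset\{|z|>R_1\}$); the compact arc you want is $W_k\big(\partial\Omega_k\cap A_k(R_2)\big)$, the non-horizontal piece of $\partial\wt\Omega_k$ from Corollary~\ref{C.Description}, and landing on it brings you to $X(S_{R_2})$, with a further bounded distance to $\partial E$. The serious problem is the claim that $[w_0,w_*]$ stays in $\wt\Omega_k$: $\wt\Omega_k$ is the complement of a half-strip, hence not convex, and when $w_0$ lies above or below the strip a chord to the compact arc re-enters the strip. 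Moreover, $w_*$ is the nearest point of $\Gamma_k$, not of $\partial\wt\Omega_k$, so $|w_0-w_*|\geq d_k(w_0)$ and you still have to control the excess; passing to an adjacent chart is not an obvious fix, since that chart has its own horizontal rays playing the same role.

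A cleaner way to finish avoids the $w$-segment altogether. For $|z_0|>2R_2$ take the chart $k$ provided by Lemma~\ref{L.Comparaison.2} and the radial segment $\gamma(t)=te^{i\arg z_0}$, $R_2\leq t\leq|z_0|$, which stays in $\Omega_k$. Its induced length is $2\int_{R_2}^{|z_0|}\ch\omega\,|\sqrt\phi|\,dt$. Proposition~\ref{P.Estimate} combined with Lemma~\ref{L.Comparaison.2} gives $|\omega(\gamma(t))|\leq 2K_0e^{-c_1 t}$ for $t\geq 2R_2$, so $\int(\ch\omega-1)|\sqrt\phi|\,dt$ is bounded uniformly in $z_0$; hence $d_E(p,\partial E)\leq 2\int_{R_2}^{|z_0|}|\sqrt\phi|\,dt+O(1)$. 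That last integral is the Euclidean length of $W_k\circ\gamma$, which joins $w_0$ to the bounded set $W_k(S_{R_2}\cap\Omega_k)$ and therefore equals $|w_0|+O(1)$. Finally, for the chart chosen in Lemma~\ref{L.Comparaison.2}, the image $w_0$ lies in the angular sector of $\wt\Omega_k$ away from the two horizontal rays, and Corollary~\ref{C.Description} then yields $d_k(w_0)=|w_0|+O(1)$. Combining gives $d_E(p,\partial E)\leq 2d_k(w_0)+c'$ as required, and together with your curvature estimate and the compact region $\{|z|\leq R_3\}$ this completes the proof.
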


\begin{proof}
Let $m\in \n$ be the degree of the end $E$ with respect to the parametrization
$X$. We consider the
open sets $\Omega_k\subset U$,  $k=0,\cdots,2m+1$, as defined in
\eqref{Eq.Open}
and the real number $R_3 >1$ given in
Proposition
\ref{P.courbure}.   In this proof we will use  the notations previously
established
for the function $W=\Im\, \int \sqrt{\phi (z)}\,
dz,$
$\wt\Omega_k=W(\Omega_k),$  and $Z_k:\wt\Omega_k\longrightarrow \Omega_k$.

 Let $p\in E$ and let $z\in U$ such that $X(z)=p$.   Assume   first that
$|z|>R_3$.
 Therefore, by Lemma \ref{L.Comparaison.2},
there exists $k\in \{0,\dots,2m+1\}$ such
that
$z\in \Omega_k$ and $d_\phi (z,\partial \Omega_k) >c_1 |z|$.
We set $w=W(z)$, so that $w\in \wt \Omega_k$.
We deduce from formula (\ref{F.metrique}) that
the metric $\wt ds^2$ induced on $\wt \Omega_k$ by the minimal immersion
$\wt X := X\circ Z_k  : \wt \Omega_k \rightarrow \hi2 \times \R $ is given by
\begin{equation*}
 \wt ds^2 = 4 \ch^2 \wt \omega_k (w)\,  |dw|^2.
\end{equation*}
Therefore, we obtain
\begin{equation*}
 K_E (p)= K_{\wt ds^2} (w)=
-\frac{\tanh \wt \omega_k}{4\ch^2 \wt \omega_k}\, \Delta \wt \omega_k
- \frac{1}{4\ch^4 \wt \omega_k}\, |\nabla \wt \omega_k|^2.
\end{equation*}
It follows from the proof of  Proposition \ref{P.courbure} that
  $d_k(w)>2$. Now, by a straightforward
computation, using
formulas
(\ref{Eq.Bochner}), (\ref{Eq.Estimate}) and
(\ref{F.estimee du gradient}) (where $\delta=2c_2$), we obtain
\begin{equation*}
  \rvert K_{\wt ds^2} (w) \rvert  < c_3 e^{-2d_k (w)},
\end{equation*}
for some constant $c_3>0$ which does not depend on $w$. Observe that
\begin{equation*}
 d_E(p,\partial E)= d_E(\wt X (w), \partial E)\geq
d_E(\wt X (w), \wt X (\partial \wt \Omega_k))
=d_{\wt ds^2} (w,\partial \wt \Omega_k).
\end{equation*}
From the comparison of the metric $d_{\wt ds^2}$ with the Euclidean metric, we
infer
\begin{equation*}
 d_{\wt ds^2} (w,\partial \wt \Omega_k)
\geq 2d_k(w).
\end{equation*}
 Formula \eqref{F.courbure sur un
bout} follows by   the previous inequalities
 for $|z|>R_3$, i.e. outside a compact subset of
the end $E$. Finally,  it suffices to observe that the continuous function
$p\mapsto |K_E (p)| \, e^{d_E(p,\partial E)}$  is bounded on any compact
subset of $E$.
\end{proof}

\begin{remark}
 A straightforward  consequence of Theorem \ref{T.estimees de courbure} is
 the following: for any complete and connected minimal surface
$\Sigma\subset \hi2 \times \R$
with finite total curvature, and for any $p_0 \in \Sigma$, there exists a
constant
$c_4=c_4(p_0,\Sigma)$  such that for any   $p\in \Sigma$ we have
\begin{equation*}
 \rvert K_\Sigma (p)\rvert \leq c_4\, e^{-d_\Sigma(p, p_0)}.
\end{equation*}
\end{remark}

\bigskip

\begin{lemma}\label{L.plonge}
{   Let $X:= (F,h) : U \rightarrow E\subset \hi2 \times \R$ be a complete
minimal end with finite total curvature. Let $m\in \n$, be the degree of the
end $E$.

 For any $k\in \{0,\dots, 2m-1\}$, there exists a compact subset
 $K\subset \co$ such that the
restricted  minimal and
 conformal immersion
 $X=(F,h) : \Omega_k\setminus K \rightarrow \hi2 \times \R$ is an embedding. }
\end{lemma}

\begin{proof}
{   To simplify the notations we give the proof for $k=1$.

Recall that  the immersion $X$ is proper, Theorem \ref{T.propre},  and that
$h=2\, \Im\,  W$.
Therefore we
deduce from Lemma \ref{L.Level set} and Proposition \ref{P.courbure} that there
exists a real number
$C_1 > C_0$ with the property  that for any $C \geq C_1$,
the level set
$\{ \Im\,  W (z) = C\}$ (resp. $\{ \Im\,  W (z) = -C\}$)
in $\Omega_1$ consists of a complete curve $L(C)$ (resp.  $L(-C)$)
 such that the geodesic curvature of $F(L(C))$
(resp. $F(L(-C))$) in $\hi2$ is
smaller than 1/4 in absolute value.

Consequently for any
$C$ satisfying $ C \geq C_1$, we get that $F(L(C))$ and
  $F(L(-C))$ are complete and embedded curves in $\hi2$.
We deduce from Theorem \ref{T.Description} that there exist
 $\theta_0,\theta_1,\theta_2 \in \pain \hi2$, with
$\theta_0 \not= \theta_1$, $\theta_1 \not= \theta_2$, and such that
\begin{equation*}
 \pain F(L(C)) =\{ \theta_0, \theta_1\} \qquad \text{and} \qquad
 \pain F(L(-C))=\{ \theta_1, \theta_2\}.
\end{equation*}
Considering the height function $h$, we deduce that the restriction of $X$
 to the nonconnected subset of $\Omega_1$ bounded by
 $L (C_1) \cup L(-C_1)$ is an embedding.

 Let $\varepsilon >0$.
 We deduce from (the proof of) Theorem \ref{T.Description} -
(\ref{description.2}), that there exist $a^+ \in L(C_1)$,
$a^- \in L(-C_1)$ and a compact arc $\gamma_1 \subset \Omega_1$, joining
$a^+$ and $a^-$ and verifying
\begin{itemize}
\item $\Re\,  W$ is constant along $\gamma_1$.

\item $\Im\,  W$ is strictly monotonous  along $\gamma_1$.

\item Denoting by $L_1(C_1)$ (resp. $L_1(-C_1)$
the component of $L(C_1)\setminus \{a^+\}$
(resp. $L(-C_1)$  $\setminus \{a^-\}$)
with asymptotic direction the ray
$\{re^{i\frac{\pi}{m+1} }  \}$, then $F(L_1(C_1))$ remains in a
$\varepsilon$-neighborhood of $F(L_1(-C_1))$ in $\hi2$ (and also
$F(L_1(-C_1))$  remains in a
$\varepsilon$- neighborhood of $F(L_1(C_1))$). Therefore we have
$\pain F(L_1(C_1))=\pain F(L_1(-C_1))= \theta_1$ (say).

\item For any $C\in [-C_1,C_1]$, denoting by $H_1(C) \subset \Omega_1$ the
semi-complete level curve $\{ \Im\,  W(z)= C\}\cap \Omega_1$ issue from
$\gamma_1$ and with asymptotic direction the ray
$\{re^{i\frac{\pi}{m+1} }\}$,  then $F(H_1(C))\subset \hi2$ remains in a
$\varepsilon$-neighborhood of  $F(L_1(C_1))$ and $F(L_1(-C_1))$.
\end{itemize}
We deduce that the restriction of $X$ to the connected component of $\Omega_1$
bounded by $\gamma_1$ and a part of $L(C_1)\cup L(-C_1)$ and containing
$L_1(C_1)\cup L_1(-C_1)$ is an embedding.

 \medskip

  Recall that $L_0^+ := \{\Im\,  W (z)= C_0\} \cap \Omega_1$
 (resp. $L_0^- := \{\Im\,  W (z)= -C_0\} \cap \Omega_1$) is a complete curve
and
that $\pain F(L_0^+)=\{ \theta_0, \theta_1\}$ and
$\pain F(L_0^-)=\{ \theta_1, \theta_2\}$.

\medskip

We deduce again from (the proof of) Theorem \ref{T.Description} -
(\ref{description.2}), that there exist
$b_0^- \in L_0^-$, $b_1^- \in L(-C_1)$, a compact arc
$\gamma_2\subset \ov \Omega_1$ joining $b_0^-$ and $b_1^-$ verifying
\begin{itemize}
\item $\Re\,  W$ is constant along $\gamma_2$.

\item $\Im\,  W$ is strictly monotonous  along $\gamma_2$.

\item Denoting by $(L_0^-)_2$ (resp. $L_2(-C_1)$
the component of $L_0^- \setminus \{b_0^-\}$
(resp. $L(-C_1)$ $\setminus \{b_1^-\}$)
with asymptotic direction the ray
$\{re^{i\frac{2\pi}{m+1} }  \}$, then $F((L_0^-)_2)$
remains in a
$\varepsilon$-neighborhood of $F(L_2(-C_1))$ in $\hi2$ (and also
$F(L_2(-C_1))$ remains in a
$\varepsilon$- neighborhood of $F((L_0^-)_2)$).

\item For any $C\in [-C_1,-C_0]$, denoting by $H_2(C) \subset \Omega_1$ the
semi-complete level curve $\{ \Im\,  W(z)= C\}\cap \Omega_1$ issue from
$\gamma_2$ and with asymptotic direction the ray
$\{re^{i\frac{2\pi}{m+1} }\}$,  then $F(H_2(C))\subset \hi2$ remains in a
$\varepsilon$-neighborhood of both $F((L_0^-)_2)$ and $F(L_2(-C_1))$.
\end{itemize}

Since $\pain F(L_2(-C_1))=\theta_2$ and
$\pain F(L_1(-C_1))=\theta_1\not=\theta_2$, we may
assume that the points $a^+,a^-,b_0^+$ and $b_0^-$  above are chosen so
that the curves
$F(L_1(-C_1))$ and $F(L_2(-C_1))$ are far away from each other.

Consequently, for any $C\in [-C_1,-C_0]$, we have
$F(H_2(C)) \cap F(H_1(C))=\emptyset$. We deduce that the restriction of $X$ to
the subset of $\Omega_1$ bounded by $(L_0^-)_2$, $\gamma_2$, a compact part of
$L(-C_1)$, $\gamma_1$ and a part of $L(C_1)$, and containing $L_2(-C_1)$
(and $L_1(-C_1)$) is an embedding.

\medskip

In the same way, there exist
$d_0^+ \in L_0^+$, $d_1^+ \in L(C_1)$, and a compact arc
$\gamma_0\subset \ov\Omega_1$ joining $d_0^+$ and $d_1^+$, such that the
restriction of $X$ to the non bounded  connected subset $V_1$ of
$\Omega_1$
with boundary $\gamma_0 \cup \gamma_1 \cup \gamma_2$ a part of
$L_0^+ \cup L_0^-$ and a compact part of $L(C_1)\cup L(-C_1)$,
is an embedding. By construction,
$\ov \Omega_1 \setminus V_1$
is a compact part of $\ov \Omega_1$.}
\end{proof}

\section{Complete minimal surfaces in $\hi2\times \R$ with finite total
curvature}\label{Sec.Surface}

\hspace{.1cm} The aim of this section is to prove
the Main Theorem stated in the Introduction. The proof makes essential use of
the geometric properties of the horizontal sections of a finite
total curvature end, that were established in Section \ref{Sec.End}.

In the following, $\hi2 \times \{0\}$ will be identified with  $\hi2.$

\begin{definition}
 Let $X:=(F,h) : U \rightarrow E\subset \hi2 \times \R$ be a conformal and
complete minimal annular end, where $U:=\{|z| >1\}$,
 and let $\gamma \subset \hi2$ be a geodesic.

 We say that the end $X(U)\subset   \hi2 \times \R$ is
{\em asymptotic} to the vertical geodesic plane $\gamma \times \R$ if,
{   for any
real number $C$ with $|C|$ large enough, $E\cap \{ t=C\}$ is a complete
curve of $\hi2 \times \{C\}$} and if, for any
$\varepsilon >0$,  there exists a compact subset  $K\subset U$ such that
the distance between any point of $X(U\setminus K)$ and  $\gamma \times \R$ is
smaller than $\varepsilon$.
\end{definition}

\begin{lemma}\label{L.degre zero}
{   Let  $X:=(F,h) : U \rightarrow \hi2 \times \R$ be a conformal and complete
minimal annular end asymptotic to a vertical geodesic plane. Let
$m\in \n$ be the degree of the end $E:= X(U)$ with respect to the
parametrization $X$ $($see Definition $\ref{D.degre})$.

Then $E$ is embedded (up to a compact part). Furthermore, up to a compact part,
there exists a covering map $\pi : U\rightarrow U$ with degree
$m+1$, and a conformal minimal immersion $Y:U\rightarrow \hi2 \times \R$
such that:
\begin{itemize}
\item $X=Y \circ \pi$,

\item  $Y$ is an embedding,

\item the degree of the end $E$
with respect to the parametrisation $Y$ is 0.
\end{itemize}
Therefore, up to choose a new parametrization, we can assume that such an end
has degree zero}
\end{lemma}

\begin{proof}
{   We consider the
open sets $\Omega_k\subset U$,  $k=0,\cdots,2m+1$, as defined in
\eqref{Eq.Open}. With the aid of Lemma \ref{L.plonge}, up to remove a compact
part of $U$, we may assume that the restriction of $X$ to each $\Omega_k$ is an
embedding.

\medskip

On one hand, we know that there exists $C_1 >0$ such that for any $C>C_1$
the level set
$\{ h(z)=C\}$ is composed of $(m+1)$ complete curves
$L_0 (C),\dots, L_{m+1}(C)$ with $L_j (C)\subset \Omega_{2j}$,
$j=0,\dots,m$, see Lemma \ref{L.Level set}.

\medskip

On the other hand, since the end $E$ is asymptotic to a vertical geodesic
plane, there exists $C_2>0$ such that for any $C>C_2$ the intersection
$E\cap \{ t=C\}$ is composed of a complete curve.

\medskip

Consequently, for any $C > C_1+C_2$ we have that
\begin{equation*}
 X\big( L_0(C)\big)= X\big( L_1(C)\big)=\cdots = X\big( L_{m+1}(C)\big).
\end{equation*}

 By making vary $C$ in $]C_1+C_2, +\infty [$, we obtain
 that $X(\Omega_0), X(\Omega_2),\dots, X(\Omega_{2m})$ agree on
an open set. We deduce with the analytic continuation principle that, up to a
compact part, we have $X(\Omega_0)=X(\Omega_2)=\cdots = X(\Omega_{2m})$

\medskip

For analogous reasons, since we have also $L_j(C)\subset \Omega_{2j+1}$,
$j=0,\dots,m$, we obtain that
$X(\Omega_1)=X(\Omega_3)=\cdots = X(\Omega_{2m+1})$ up to a compact part.

Thus, up to remove a compact part of $U$ and $E$, we can assume that
$X : U \rightarrow E$ is a covering map with degree $m+1$.

For any $z_1,z_2 \in U$ we set $z_1 \sim z_2$ if
$X(z_1)=X(z_2)$. Then the canonical projection
$\pi : U \rightarrow U/\!\!\!\sim$, is a covering map with degree $m+1$.
For any $p\in U/\!\!\!\sim$, we set $Y(p)=X(z)$ for any $z\in U$ verifying
$\pi (z)=p$, by construction $Y(p)$ does not depend on the choice of such
a $z$.

Observe that $U/\!\!\sim$ is homeomorphic to an annulus. Since
$Y$ is a conformal and minimal immersion with finite total curvature, we deduce
that $U/\!\!\sim$ is conformally equivalent to $U$. Thus we may assume that
$U/\!\!\sim \,=U$ and $Y: U\rightarrow E\subset \hi2 \times \R$ is a complete
and
minimal immersion with finite total curvature. We deduce from  Lemma
\ref{L.plonge} that $Y$ is an  embedding.

\smallskip

By construction, for any $C>0$ large enough, $Y^{-1} (\{t=C\})$ is
composed of a unique and complete curve, namely
$\pi (L_k(C))$, for any $k\in \{0,\dots,m+1\}$. Consequently, if
$n \in \n$ denotes the degree of the end $E$ with respect to the
parametrization $Y$, since $Y^{-1} (\{t=C\})$ is
composed of $n+1$ complete and disjoint curves, we deduce that  $n=0$.
Thus the degree of the end $E$ with respect to the parametrization $Y$ is zero,
this completes the proof. }
\end{proof}

\medskip

\begin{definition}\label{D.horizontal graph}
Let $\gamma \subset \hi2$ be a geodesic. We say that a nonempty set
$S \subset \hi2 \times \R$ is a {\em horizontal graph with respect to
the geodesic
$\gamma$}, if for any equidistant line
$\wt \gamma$ of $\gamma$ and for any $t \in \R$, the curve $\wt \gamma \times
\{t\}$
intersects $S$ at most at one point.\end{definition}

\begin{remark}
We notice that a different notion of horizontal graph appears in \cite{SaEarp}, in order  to treat    different kinds of problems
about minimal surfaces in $\hi2\times{\mathbb R}.$
\end{remark}

\begin{prop}\label{P.non existence}
 Let $\gamma_1$ and $\gamma_2$ be two distintc geodesics in $\hi2$ with a
common asymptotic
point.  Then, there is no complete, connected, immersed minimal surface with
finite
total
curvature
and two ends, one being asymptotic to $\gamma_1 \times \R$ and the other
asymptotic to   $\gamma_2 \times \R$.
\end{prop}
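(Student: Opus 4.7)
We argue by contradiction and exploit the parabolic symmetry at the common asymptotic endpoint. Let $\theta\in\pain\hi2$ be the common endpoint of $\gamma_1$ and $\gamma_2$, and let $\{\psi_s\}_{s\in\R}$ denote the one-parameter subgroup of parabolic isometries of $\hi2$ fixing $\theta$, normalized so that $\psi_0=\mathrm{id}$ and $\psi_1(\gamma_1)=\gamma_2$. Set $\tilde\psi_s:=\psi_s\times\mathrm{id}_\R$, and consider the vertical geodesic planes $P_s:=\psi_s(\gamma_1)\times\R$, each of which is totally geodesic and hence minimal.

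First I would show that $S$ lies strictly in the open slab bounded by $P_0=\gamma_1\times\R$ and $P_1=\gamma_2\times\R$. Since the two ends of $S$ are asymptotic to $P_0$ and $P_1$, the surface $S$ stays in a fixed tubular neighborhood of $P_0\cup P_1$ outside a compact set; hence for $s$ outside $[0,1]$ with $|s|$ large, $P_s\cap S=\emptyset$. Sliding $s$ back toward $[0,1]$, a first interior contact would by the maximum principle force $S$ to coincide with some $P_s$, contradicting the distinctness of its two asymptotic planes. The same argument applied to $P_0$ and $P_1$ themselves confines $S$ to the open slab.

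Next I would use parabolic translates of $S$ itself as barriers. Set $S_s:=\tilde\psi_s(S)$; by the previous step applied to $S_1$, the surface $S_1$ lies in the adjacent open slab bounded by $\gamma_2\times\R$ and $\psi_1(\gamma_2)\times\R$, which is disjoint from the slab containing $S$, giving $S\cap S_1=\emptyset$. Let
\[
 s^*:=\inf\{\,s\in(0,1]\,:\,S\cap S_{s'}=\emptyset\ \text{for every}\ s'\in[s,1]\,\}.
\]
Using the $C^1$-convergence of horizontal sections to the asymptotic geodesics (Theorem~\ref{T.Description}~(4)), together with a comparison of the graphs of $S$ and $S_s$ over their respective asymptotic planes, one shows $s^*\in(0,1)$ and that $S$ touches $S_{s^*}$ at a first interior point with $S$ locally on one side of $S_{s^*}$. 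The maximum principle then yields $S=S_{s^*}=\tilde\psi_{s^*}(S)$, so $S$ is invariant under the non-trivial parabolic isometry $\tilde\psi_{s^*}$.

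Finally, iterating gives that $S$ is preserved by the infinite cyclic group $\{\tilde\psi_{ns^*}\}_{n\in\Z}$. Since $S$ has only two ends, there exists $n_0>0$ such that $\tilde\psi_{n_0 s^*}$ fixes each end, forcing the asymptote $\gamma_i\times\R$ of each end to be invariant under $\tilde\psi_{n_0 s^*}$; but a non-trivial parabolic fixing $\theta$ preserves no geodesic through $\theta$, yielding the desired contradiction. The main obstacle in this plan is justifying that the first contact at $s=s^*$ occurs at an interior point rather than only asymptotically: this requires separating the asymptotic boundaries of $S$ and $S_{s^*}$ in the compactification of $\hi2\times\R$, which hinges on the $C^1$ horizontal-section convergence of Theorem~\ref{T.Description}~(4) combined with the sign information inherited from the slab confinement.
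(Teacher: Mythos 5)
Your approach diverges fundamentally from the paper's, and it contains a genuine gap that you yourself flag at the end but do not resolve — and I don't think it can be resolved with the tools cited.

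The paper's proof reflects across vertical planes $P_s$ that are \emph{orthogonal} to the geodesic $\gamma_\bot$ (the axis of reflective symmetry between $\gamma_1$ and $\gamma_2$, with $p_\infty \in \pain\gamma_\bot$). The decisive geometric feature is that these $P_s$ cross both $\gamma_1\times\R$ and $\gamma_2\times\R$ transversally, at angles bounded away from zero. Combined with the $C^1$ convergence of horizontal sections (Proposition~\ref{P.C1}), this gives a uniform angle at the intersection $\Sigma\cap P_s$ for $|t|$ large (the paper's Claim~1) and keeps the Alexandrov argument controlled at infinity; the horizontal-graph conclusion is then contradicted by a Riemann--Hurwitz count of the zeros of $Q(F)$, which forces a horizontal tangent point. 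Your proposal replaces both ingredients: you sweep and reflect \emph{along} the parabolic direction, using the planes $\psi_s(\gamma_1)\times\R$ and the translates $S_s = \tilde\psi_s(S)$, and you aim for a symmetry contradiction rather than a Hopf-differential one.

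The gap is exactly the one you name at the end: justifying that the first contact (both in the slab-confinement step and in the $S$-vs-$S_{s^*}$ sliding step) happens at a finite interior tangency. In your sweeping direction this cannot be extracted from Theorem~\ref{T.Description}~(4). All the planes $\psi_s(\gamma_1)\times\R$, and both asymptotic planes of $S$ and of $S_{s'}$, pass through the same ideal point $p_\infty$, and two distinct geodesics sharing an ideal endpoint approach each other asymptotically (their distance tends to $0$ toward $p_\infty$). Consequently the end of $S$ asymptotic to $\gamma_1\times\R$ gets arbitrarily close to $P_{s}$ for any $s<0$ near $\{p_\infty\}\times\R$, so the sweep may ``first touch'' only at infinity, where neither the interior nor the boundary maximum principle applies. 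The $C^1$ convergence in Theorem~\ref{T.Description}~(4) is a statement about the sections as $|t|\to\infty$; it does \emph{not} control how a fixed horizontal section behaves as one goes toward $p_\infty$ along the section, which is what you would need to separate $S$ from $P_s$, or from $S_{s^*}$, near the common ideal point. This is precisely the degeneracy the paper avoids by choosing $\gamma_\bot$ and sweeping perpendicular to it, so that the transverse angle with $\gamma_1,\gamma_2$ stays bounded away from $0$. Unless you supply an additional quantitative estimate comparing the rate at which the ends approach their asymptotic planes with the rate at which the geodesics through $p_\infty$ collapse onto each other, the slab confinement and the crucial first-contact claim are both unjustified, and the parabolic-invariance contradiction never gets off the ground. (The final step — a two-ended surface cannot be invariant under a nontrivial parabolic fixing $p_\infty$ — is fine, but it is downstream of the gap.)
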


\begin{proof}
(see Figure \ref{Fig4}).
 We set $\pain\gamma_1=\{a_\infty, p_\infty\}$ and
$\pain\gamma_2=\{b_\infty, p_\infty\}$, so that $p_\infty$ is the common
asymptotic point of $\gamma_1$ and $\gamma_2$. We denote by $\gamma_\bot$ the
geodesic   such that $\gamma_1$ is the
reflection of $\gamma_2$ across $\gamma_\bot$, we then have
$p_\infty \in \pain \gamma_\bot$. We denote by $\gamma_0$  the geodesic
such
that $\pain \gamma_0=\{a_\infty, b_\infty\}$. Observe that $\gamma_0$ meets
$\gamma_\bot$ orthogonally at some point $p_0\in \gamma_0\cap \gamma_\bot$.
For any $s>0$ we denote by $p_s$ the point in the half geodesic
$[p_0, p_\infty\mathclose[\subset \gamma_\bot$ such that $d_{\hi2}(p_0,p_s)=s$.

 For any $s>0$ let $\gamma_s$ be the geodesic
orthogonal to $\gamma_\bot$  at $p_s$. We set $P_s := \gamma_s \times \R$.

\begin{figure}[!ht]
\centerline{\includegraphics[scale=0.4]{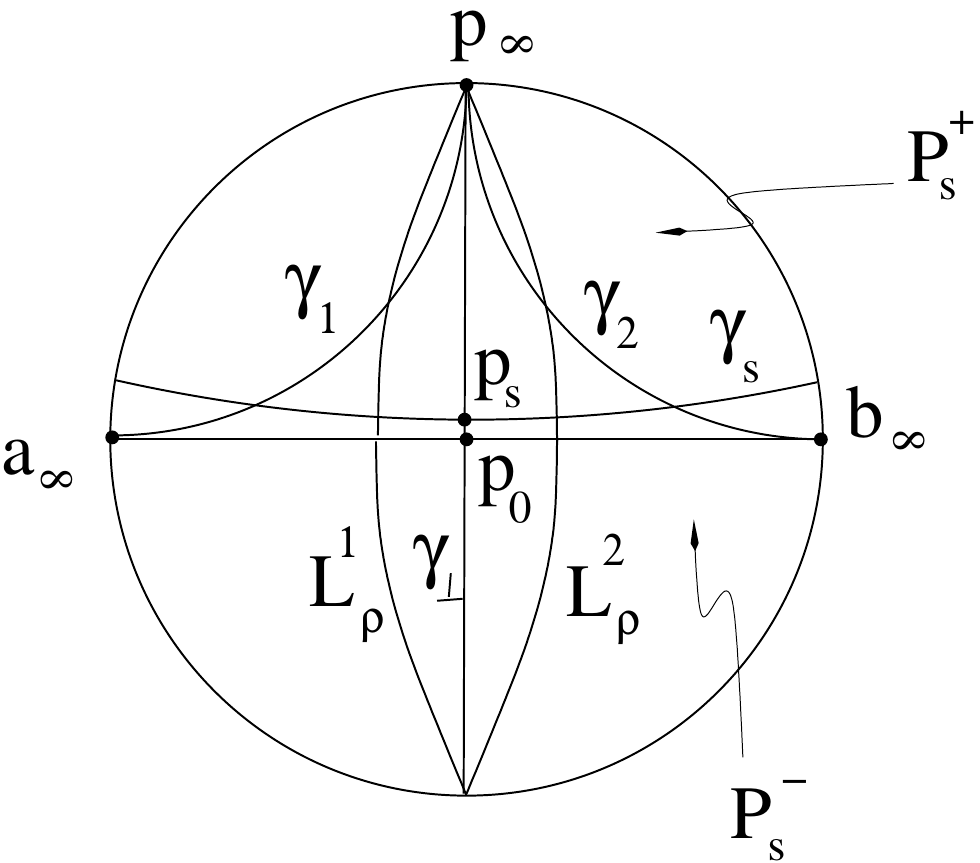}}
\caption{}\label{Fig4}
\end{figure}

Assume by contradiction that there exists a  complete and connected minimal
surface
$\Sigma$
with finite total curvature
and two ends, one  asymptotic to $\gamma_1 \times \R$ and the other
 asymptotic to   $\gamma_2 \times \R$. By a result from A. Huber
\cite[Theorems 13 and 15]{Hub}, such  a surface is
parametrized by a Riemann surface $M$ conformally equivalent to a compact
Riemann surface $\ov M$ punctured at two points $z_1$, $z_2$,
$M \simeq \ov M \setminus \{z_1,z_2\}$. We denote by
$X=(F,h) : M \rightarrow \Sigma\subset \hi2 \times \R$ the minimal and
conformal immersion. Thus, $F : M \rightarrow \hi2$ is a harmonic map
and $h : M \rightarrow \R$ is a harmonic function.

\smallskip

We want to show that $\Sigma$ is a horizontal graph with respect to
$\gamma_\bot$, afterwards we will derive a contradiction to conclude that
such a surface does not exist.

\smallskip

 For any $s>0$, we denote by $P_s^-$ the component of
$(\hi2\times \R) \setminus P_s$ containing $\{p_0\} \times \R$ and we denote by
$P_s^+$ the other component. Thus $\{p_\infty\}\times \R \subset \pain P_s^+$.
For any
$s>0$ we set
$\Sigma_s^- := \Sigma \cap P_s^-$,
$\Sigma_s^+ := \Sigma \cap P_s^+$ and we denote by
$\Sigma_s^{-*}$ the reflection of $\Sigma_s^-$ across
the vertical geodesic plane $P_s$.

For any $\rho >0$, we denote by $L_\rho^1$ (resp.  $L_\rho^2$) the equidistant
line
of $\gamma_\bot$ with distance $\rho$, intersecting $\gamma_1$ (resp.
$\gamma_2$).
We denote by $\mathcal{C}_\rho$ the domain of $\hi2 \times \R $
  bounded by $ (L_\rho^1 \cup L_\rho^2 )\times \R  $ and we set
$\mathcal{Q}_\rho := (\hi2 \times \R) \setminus \mathcal{C}_\rho$. Thus,
$\{a_\infty\}\times\R, \{b_\infty\}\times\R \subset \pain\mathcal{Q}_\rho  $ for
any
 $\rho >0$. Since each end of $\Sigma$ is asymptotic to one of the vertical
planes
$\gamma_i \times \R$, $i=1,2$, for any $s>0$ there exists $\rho_s>0,$ large
enough, such that
$\Sigma \cap \mathcal{Q}_{\rho_s} \subset P_s^-$.

{   Following Lemma \ref{L.degre zero} it can be assumed that
each end has degree zero with respect to a suitable parametrization.
Therefore, we deduce from Lemma \ref{L.Level set} that
for $s>0$ small enough, for each end $E$  and for any real number
$t$, the level set
$E \cap (\hi2 \times\{t\}) \cap P_s^-$ has only one non bounded component.
As a consequence of Propositions \ref{P.courbure} and \ref{P.graph},
we  have that if $s>0$ is
small enough, then for any $t\in \R$,  the level set
$\Sigma_{s}^- \cap (\hi2 \times\{t\})$
is a horizontal graph
with respect to the geodesic $\gamma_\bot$. } Consequently, there exists $s_1>0$
such that $\Sigma_{s_1}^-$ is a horizontal graph with respect to $\gamma_\bot$
and
$ \Sigma_s^{-*} \cap \Sigma_s^+ =\emptyset  $ for any $0< s < s_1$.

\

We set
\begin{equation*}
 I:=\{ \sigma \geq  0 \mid \Sigma_s^{-*} \cap \Sigma_s^+ =\emptyset,\
\text{for any } 0<s \leq \sigma \}.
\end{equation*}

 In order to ensure   that $\Sigma$ is a horizontal graph, we must show that $I=[0,+\infty\mathclose[.$

The set $I$ is nonempty because  $[0,s_1]\subset I.$

We set $s_2 =\sup I$. If $s_2=+\infty$ we are done. Assume that
$s_2\not=+\infty$.
By a continuity argument we have
$\Sigma_{s_2}^{-*} \cap \Sigma_{s_2}^+ =\emptyset$, so that
$s_2 \in I$.

Recall that one end of $\Sigma$ is asymptotic to $\gamma_1 \times \R$ and the
other end is asymptotic to  $\gamma_2 \times \R$. Moreover,
 from Lemma \ref{L.Level set}, formula (\ref{Eq.Ima}) and
Proposition \ref{P.courbure},  it follows  that,
for any $\varepsilon >0$, there exists
$t_0 >0$ such that for any $t>t_0$, the intersection
$\Sigma \cap (\hi2 \times \{t\})$ is composed of two complete curves, $c^t_1$
and    $c^t_2$ verifying
\begin{equation*}
 \pain c^t_1 =\{a_\infty, p_\infty\},\  \pain c^t_2 =\{b_\infty,
p_\infty\}\quad  \text{and}\quad \sup_{c^t_i}\rvert \kappa (q)\rvert <
\varepsilon,\
i=1,2,
\end{equation*}
where $\kappa$ denotes  the geodesic curvature.
From Proposition \ref{P.C1} we deduce that  that $c^t_i$ is $C^1$-close to $\gamma_i$,
$i=1,2$, if $\varepsilon$ is small enough. Analogously  $\Sigma \cap (\hi2
\times
\{-t\})$ is composed of two
complete curves, $c^{-t}_1$ and    $c^{-t}_2$, $C^1$-close to $\gamma_1$ and
$\gamma_2$ respectively.

\

 {\em {\bf Claim 1.}  There exist $t_0 >0$ and $\eta_1>0$ such that
$\Sigma_{s_2+ \eta_1}^- \cap (\hi2 \times \{\rvert t\rvert > t_0\})$ is a
horizontal graph with respect to the geodesic $\gamma_\bot$ and
 $\big( \Sigma_{s_2+ \eta_1}^{-*} \cap \Sigma_{s_2+ \eta_1}^+\big) \cap
 (\hi2 \times \{\rvert t\rvert > t_0\})= \emptyset$.}

\smallskip

We set $p_1^t=c_1^t \cap P_{s_2}$. Observe that the (nonoriented) angle between
$c_1^t$ and the equidistant line to $\gamma_\bot$ passing through $p_1^t$
is
close to the angle between the same equidistant line and $\gamma_1$ at the
common point.
Hence, there is $\alpha\in(0,\pi/2)$ such that, if $t_0$ is large enough, this
angle is  larger than
$\alpha$ for any $t>t_0$, and the same is true for  the analogous angles defined
 for  $P_{s_2}\cap c_2^t$,
$P_{s_2}\cap c_1^{-t}$ and $P_{s_2}\cap c_2^{-t}$. Therefore, there exists
$\eta_1 >0$ such that
$\Sigma_{s_2+ \eta_1}^- \cap (\hi2 \times \{\rvert t\rvert > t_0\})$ is a
horizontal graph with respect to $\gamma_\bot$ and
 $\big( \Sigma_{s_2+ \eta_1}^{-*} \cap \Sigma_{s_2+ \eta_1}^+\big) \cap
 (\hi2 \times \{\rvert t\rvert > t_0\})= \emptyset.$ Then the claim is proved.

\

 {\em {\bf Claim 2.} There exists $\eta_2>0$ such that
$\Sigma_{s_2+ \eta_2}^- \cap (\hi2 \times \{\rvert t\rvert \leq  t_0\})$ is a
horizontal graph with respect to the geodesic $\gamma_\bot$ and
 $\big( \Sigma_{s_2+ \eta_2}^{-*} \cap \Sigma_{s_2+ \eta_2}^+\big) \cap
 (\hi2 \times \{\rvert t\rvert \leq t_0\})= \emptyset$.}

\smallskip

 Observe first that,
at any  point of $\Sigma\, \cap P_{s_2}$,
the equidistant line to $\gamma_\bot$ passing through this point is not tangent
to
$\Sigma$.
Indeed, suppose that at some point $p\in \Sigma \cap  P_{s_2}$ the equidistant
line to $\gamma_\bot$ passing through $p$ is tangent to $\Sigma$. Thus $\Sigma$
is orthogonal to $P_{s_2}$ at $p$ and, therefore,
$\Sigma_{s_2}^{-*} $ and $\Sigma_{s_2}^+$ are tangent at the point $p$ of their
common boundary.
Since  $\Sigma_{s_2}^{-*} \cap \Sigma_{s_2}^+ =\emptyset$,
the boundary maximum
principle would imply that  $\Sigma_{s_2}^{-*} = \Sigma_{s_2}^+$.
This gives a contradiction,
since the asymptotic boundary of $\Sigma$ is not symmetric with
respect to any vertical geodesic plane $P_s$.

Therefore, since  $(\Sigma \cap  P_{s_2}) \cap \{   \rvert t \rvert
\leq t_0\}$ is
compact, there  is $\beta\in(0,\pi/2)$ such that the nonoriented angle
between the equidistant lines to $\gamma_\bot$ and $\Sigma$ at any point of
$(\Sigma \cap P_{s_2}) \cap \{  \rvert t \rvert \leq t_0\}$ is larger
than $\beta$. By a compactness argument again, there is $\eta_2>0$ such that
$\Sigma_{s_2+ \eta_2}^- \cap (\hi2 \times \{\rvert t\rvert \leq  t_0\})$ is a
horizontal graph and
 $\big( \Sigma_{s_2+ \eta_2}^{-*} \cap \Sigma_{s_2+ \eta_2}^+\big) \cap
 (\hi2 \times \{\rvert t\rvert \leq t_0\})= \emptyset.$ This proves the claim.

\

We set
 $\eta =\min\{\eta_1,\eta_2\}$. From  Claims 1 and 2, we get that $s_2+\eta \in
I$.
This gives a
contradiction with the maximality of $s_2$. Therefore, $I=[0,+\infty\mathclose[$
and $\Sigma$ is a horizontal graph with respect to $\gamma_\bot$.

 \smallskip

 Now we can conclude the proof.

 \smallskip

Let $Q(F)$ be the quadratic Hopf differential associated to $F$. We know
that  $Q(F)$ is holomorphic on $M$ and has a pole at the ends
$z_1, z_2 \in \ov M$. {   Let us denote by $m_1, m_2 \in \n$ the degrees of
the ends of $\Sigma$ with respect to the parametrization $X$.
 Therefore,  one end
  is a pole of order $2m_1+4 $ of $Q(F)$ and the other end is a pole of order
 $2m_2+4 $ of $Q(F)$.
According to  the Riemann relation for  $Q(F),$ we have  that
\begin{equation*}
\text{Pole}(Q(F)) -\text{Zero} (Q(F))=2\chi (\ov M),
\end{equation*}
thus $ \text{Zero} (Q(F))=\text{Pole}(Q(F))-2\chi (\ov M)=
2(m_1+m_2)+ 8-2\chi (\ov M)\geq 4$. }
Consequently, there exists $z_0\in M$ which is a zero of $Q(F)$.
Since $Q(F)=\phi (z) dz^2$, we deduce from
(\ref{F.metrique}) that $z_0$  is a pole of
$\omega$ and
then, the tangent plane of $\Sigma$ at $X(z_0)$
is horizontal (see formula \eqref{F.n3}).

 Let
$s^\prime >0$ such that $X(z_0)\in P_{s^\prime}$. We get a contradiction
by the boundary maximum principle since, on one hand
  $ \Sigma_{s^\prime}^{-*} \cap \Sigma_{s^\prime}^+ = \emptyset$
(since $\Sigma$ is a horizontal graph with respect to $\gamma_\bot$)
 but
on the other hand $ \Sigma_{s^\prime}^{-*}$ and $\Sigma_{s^\prime}^+$ are
tangent at their
common boundary point $X(z_0)$.
\end{proof}

\begin{prop}\label{P.non existence 2}
{
 Let $\gamma_1$ and $\gamma_2$ be two distinct geodesics  in
$\hi2$ intersecting at some point.  Let $\Sigma\subset\hi2 \times \R$ be
a complete immersed minimal surface with finite
total
curvature
and two ends, one being asymptotic to $\gamma_1 \times \R$ and the other
asymptotic to   $\gamma_2 \times \R$.

Then we have $\Sigma=(\gamma_1 \times \R) \cup (\gamma_2 \times \R)$ and,
consequently, $\Sigma$ has zero total curvature.}
\end{prop}

\begin{proof}
 We set $\{w\}:=\gamma_1 \cap \gamma_2$. We denote by $\alpha$ and $\beta$
the two geodesics
passing through $w$ such that the reflection of $\gamma_1$
across $\alpha$ is $\gamma_2$ and the reflection of $\gamma_1$
across
$\beta$ is $\gamma_2$. Thus,  $\alpha$ intersects  $\beta$ orthogonally at
$w$ (see Figure \ref{Fig.Prop3.2}).

\begin{figure}[!ht]
\centerline{\includegraphics[scale=0.4]{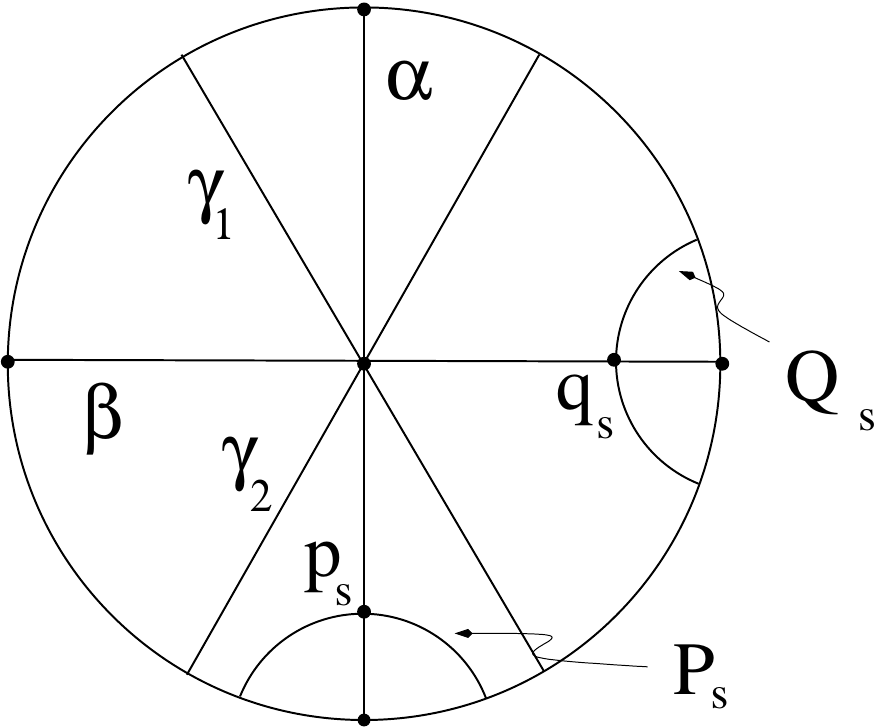}}
\caption{$w=\gamma_1\cap\gamma_2=0.$}\label{Fig.Prop3.2}
\end{figure}

 We choose an orientation on $\alpha$ and $\beta$. For any $s\in \R,$ we denote
by $p_s$ (resp. $q_s$) the point of  $\alpha$ (resp. $\beta$) whose signed
distance to $w$ is $s$, observe that $q_0=p_0=w$. Furthermore, for any $s\in \R,$
we denote by
$P_s$  (resp. $Q_s$) the vertical geodesic plane passing through $p_s$
(resp. $q_s$) and orthogonal to the geodesic $\alpha$ (resp. $\beta$), note
that $P_0=\beta \times \R$ and $Q_0=\alpha \times \R$.

We set
$ {P_0^+}:= \cup_{s> 0}\, P_s$,  $ {P_0^-}:= \cup_{s>0}\, P_s$,
$ {Q_0^+}:= \cup_{s> 0}\, Q_s$ and
${Q_0^-}:= \cup_{s> 0}\, Q_s$.

Assume that there exists a  complete  minimal
surface $\Sigma$ with finite total curvature
and two ends, one being asymptotic to $\gamma_1 \times \R$ and the other
being asymptotic to   $\gamma_2 \times \R$.

Using the Alexandrov reflection principle with respect to the vertical planes
$P_s$, $s\in \R$, we can show, as in the proof of Proposition
\ref{P.non existence}, {   that $\Sigma$ is symmetric with respect to $P_0$,
and that $\Sigma \cap \ov {P_0^+}$
is a horizontal graph
with respect to the geodesic $\alpha$ and so is
$\Sigma \cap \ov {P_0^-}$. In the same way, we can show that
$\Sigma$ is symmetric with respect to $Q_0$, and that
$\Sigma \cap \ov {Q_0^+}$ and $\Sigma \cap \ov {Q_0^-}$ are both horizontal
graphs  with respect to the geodesic $\beta$.

We deduce that $\Sigma$ is transversal to both $P_0$ and $Q_0$. Therefore the
intersections $\Sigma \cap P_0$ and $\Sigma \cap Q_0$ are analytic sets}.

\smallskip

Now we proceed as in the proof of \cite[Theorem 3, Case 1]{Schoen}.
\smallskip

We set $L:= P_0 \cap Q_0=\{w\} \times \R$. Since $\Sigma \cap \ov {P_0^+}$
and  $\Sigma \cap \ov {P_0^-}$ are horizontal graphs with respect to $\alpha$,
the self intersection set $S$ of $\Sigma$ is contained in $P_0$. By the same
argument,
we have $S\subset Q_0$, so that $S\subset L$. Since an end of $\Sigma$ is
asymptotic to $\gamma_1 \times \R$ and the other end is asymptotic to
$\gamma_2 \times \R$, we have $S\not= \emptyset$. By the analyticity of the
sets
$\Sigma \cap P_0$ and $\Sigma \cap Q_0$,
we get that
$S=L$. Moreover, since $\Sigma \cap \ov {P_0^+}$
and  $\Sigma \cap \ov {P_0^-}$ are horizontal graphs with respect to $\alpha$,
we deduce that $\Sigma \cap Q_0 =L$. Analogously, $\Sigma \cap P_0 =L$.
Therefore,  $\Sigma \setminus L$ consists of four connected components
$\Sigma_i,\ i=1,\cdots,4$ with:
\begin{equation*}
 \Sigma_1 \subset P_0^+ \cap Q_0^+ ,\quad \Sigma_2 \subset P_0^+ \cap Q_0^-,
\quad  \Sigma_3 \subset P_0^- \cap Q_0^- \quad \text{and} \quad
\Sigma_4 \subset P_0^- \cap Q_0^+.
\end{equation*}
Denoting by $\sigma$ the rotation
about the vertical geodesic $L$ with angle $\pi$, the reflection principle shows
that  $\sigma (\Sigma_1)=\Sigma_3$, so that
$\Sigma^\prime := \Sigma_1 \cup \Sigma_3 \cup L$ is a smooth and complete
minimal
surface embedded in $\hi2 \times \R$. {    Up to a change of numbering, we
can assume that $ \Sigma^\prime$ is asymptotic to $\gamma_1 \times \R$.
Therefore it can be shown using the Alexandrov reflection principle that
$ \Sigma^\prime=\gamma_1 \times \R$.

In the same way, it can be shown that
$\Sigma_2 \cup \Sigma_4\cup L=\gamma_2 \times \R$.
Therefore,  we get that
$\Sigma = (\gamma_1 \times \R )\cup (\gamma_2 \times \R)$,
which concludes the
proof.}
\end{proof}

 Now we  can restate the Main Theorem, announced in the Introduction, in the following way.

\begin{theo}\label{T.main theorem}
Let $\Sigma$ be a complete, connected
 minimal
surface immersed in $\hi2 \times \R$ with finite {   nonzero}
total curvature and two ends. Assume that each end is asymptotic
to a vertical geodesic plane $\gamma_i \times \R$,
where each $\gamma_i$, $i=1,2$, is a geodesic.

 Then, we have $ \gamma_1 \cap \gamma_2 =\emptyset$,
$\pain \gamma_1 \cap \pain\gamma_2 =\emptyset$. Furthermore, $\Sigma$
 is a properly
embedded annulus and is a horizontal catenoid.
\end{theo}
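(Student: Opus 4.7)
The first step is entirely free: Propositions \ref{P.non existence} and \ref{P.non existence 2} exhaust exactly the two failure modes for the pair $\gamma_1,\gamma_2$ — a shared asymptotic point or an intersection point in $\hi2$ — so the first assertion $\gamma_1\cap\gamma_2=\emptyset$ and $\pain\gamma_1\cap\pain\gamma_2=\emptyset$ is immediate. It follows that $\gamma_1$ and $\gamma_2$ admit a unique common perpendicular geodesic $\gamma_\bot\subset\hi2$. Let $m$ be the midpoint of the perpendicular segment between $\gamma_1$ and $\gamma_2$, and let $P$ denote the vertical geodesic plane through $m$ orthogonal to $\gamma_\bot$; by construction the asymptotic configuration $(\gamma_1\cup\gamma_2)\times\R$ is symmetric across $P$.

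Next I would replay the Alexandrov moving-plane argument used in the proof of Proposition \ref{P.non existence}, but this time along the perpendicular $\gamma_\bot$ and with the symmetric asymptotic data. Parametrize by signed distance $s$ from $m$ the vertical planes $P_s$ orthogonal to $\gamma_\bot$ at the point $p_s\in\gamma_\bot$, and slide from $s\to+\infty$ downward. The end structure developed in Section \ref{Sec.End}, in particular the exponential curvature decay of Proposition \ref{P.courbure}, the $C^1$-convergence of horizontal sections to the asymptotic geodesics of Theorem \ref{T.Description}, together with the Appendix graph criterion, ensures that for $s$ large $\Sigma\cap\{s'\geq s\}$ is a horizontal graph with respect to $\gamma_\bot$ whose reflection across $P_s$ is disjoint from $\Sigma\cap\{s'\leq s\}$. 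Decreasing $s$, the maximum principle (interior and boundary) prevents the first contact value $s^*$ from being positive; since the asymptotic boundary is symmetric about $P_0=P$, one obtains $s^*=0$ and the reflection across $P$ maps $\Sigma$ to itself. The same sliding started from $s\to-\infty$ shows that each component of $\Sigma\setminus P$ is a horizontal graph with respect to $\gamma_\bot$, hence $\Sigma$ is properly embedded.

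It remains to identify the topology and then $\Sigma$ itself. Finite total curvature and Huber's theorem give $\Sigma\cong\ov M\setminus\{z_1,z_2\}$ for a compact Riemann surface $\ov M$. Since each end is asymptotic to a vertical geodesic plane, Lemma \ref{L.Polynome} and Theorem \ref{T.Description} force degree $m=0$ at each puncture, so $Q(F)$ has fourth-order poles at $z_1,z_2$; the Riemann relation recalled in the proof of Proposition \ref{P.non existence} yields $\mathrm{Zero}(Q(F))=4+4g$ for $g=\mathrm{genus}(\ov M)$. The reflection symmetry from Step 2 is an antiholomorphic involution of $\ov M$ whose fixed locus contains the analytic curve $\Sigma\cap P$ and both punctures; a Riemann-Hurwitz count on the quotient, combined with the fact that each half of $\Sigma$ is a topological disk (a graph over a simply connected subset of $\gamma_\bot\times\R$ with one end), forces $g=0$, so $\Sigma$ is a properly embedded annulus. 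With the reflection symmetry across $P$, the annular conformal type, the fourth-order pole data at the two ends, and the asymptotic planes $\gamma_i\times\R$ all prescribed, the harmonic pair $(F,h)$ is rigidified: $Q(F)$ is the unique involution-invariant meromorphic quadratic differential on the sphere with two simple prescribed fourth-order poles and the residues dictated by \eqref{Eq.Ima}-\eqref{Eq.Reel}, and these parameters single out the corresponding member of the Pyo \cite{P} and Morabito-Rodriguez \cite{MR} family. The main obstacle is precisely this last rigidity step — translating ``same symmetries, same asymptotic planes, same end degrees'' into ``same surface'' within the explicit model family — since Steps 1 and 2 are largely a structured reuse of tools already developed in Section \ref{Sec.End} and in Proposition \ref{P.non existence}.
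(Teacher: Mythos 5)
Your first step matches the paper exactly: Propositions \ref{P.non existence} and \ref{P.non existence 2} rule out the two degenerate configurations and force $\gamma_1\cap\gamma_2=\emptyset$, $\pain\gamma_1\cap\pain\gamma_2=\emptyset$. Your second step --- Alexandrov reflection across the plane orthogonal to the common perpendicular at its midpoint --- is the paper's Lemma \ref{L.symetrie-2}, and it is carried out in the same way. However, the paper needs two more symmetries before it can close the argument: symmetry across the other vertical plane $\gamma_0\times\R$ (Lemma \ref{L.symetrie-1}) and symmetry across a horizontal slice $\hi2\times\{t_0\}$ (Lemma \ref{L.symetrie-3}). You skip both. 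These are not optional: without the slice symmetry one does not know that half the surface is a \emph{vertical} graph, which is what the paper uses to show $\Sigma\cap P_s$ is a Jordan curve for every $s$ and hence that $\Sigma$ is an annulus. Your alternative genus argument (``each half is a topological disk, Riemann--Hurwitz forces $g=0$'') is plausible as a heuristic but is not established: you assert the domain of the horizontal graph is simply connected without justification, and if you could prove that, the Riemann--Hurwitz step would be superfluous anyway since two disks glued along a circle already give a sphere.

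The serious gap is the final identification. You propose that ``the harmonic pair $(F,h)$ is rigidified: $Q(F)$ is the unique involution-invariant meromorphic quadratic differential on the sphere with two prescribed fourth-order poles\ldots and these parameters single out the corresponding member of the Pyo and Morabito--Rodriguez family.'' This does not work: a harmonic map $F:\Sigma\to\hi2$ is far from being determined by its Hopf differential $Q(F)$ --- even on a fixed Riemann surface with a fixed $Q(F)$, the space of harmonic maps is infinite-dimensional (it depends on boundary/asymptotic data), so knowing $Q(F)$ up to the involution and knowing the asymptotic planes does not pin down the surface. The paper's actual mechanism is entirely different and you do not reproduce it: it cuts $\Sigma$ along all three symmetry planes to obtain a simply connected fundamental domain $\Sigma_0$ whose boundary consists of one arc in $\gamma_0\times\R$, one arc in $\Gamma\times\R$, and one arc in a horizontal slice; it then passes to the \emph{conjugate} minimal surface $\Sigma_0^*$, whose boundary becomes a horizontal half-geodesic, a compact geodesic arc and a vertical half-geodesic; and it proves (via properness of $F^*$ and a covering-map argument) that $\Sigma_0^*$ is the vertical graph over an explicit ideal geodesic triangle $\mathcal{D}$ with data $0,0,+\infty$ on its three sides. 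That is precisely Pyo's construction \cite{P}, which is how the model surface is recognized. Without the three symmetries and the conjugate Plateau step, there is no route from the hypotheses to uniqueness, and the ``rigidity of $Q(F)$'' substitute you offer is not a valid replacement.
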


\

In order to prove Theorem \ref{T.main theorem} we fix some notations and
prove some lemmas.

\

 {\bf Notations.}
 Let  $\gamma_1, \gamma_2 \subset \hi2$ be two geodesics satisfying
\begin{equation}\label{geodesiques disjointes}
  \gamma_1 \cap \gamma_2 =\emptyset \quad \textrm{and} \quad
\pain \gamma_1 \cap \pain\gamma_2 =\emptyset.
\end{equation}
We denote by $\gamma_0 \subset \hi2$ the geodesic   orthogonal
to both $\gamma_1$ and $\gamma_2$. We set $p_1=\gamma_1 \cap \gamma_0$ and
 $p_2=\gamma_2 \cap \gamma_0$.
We call $p_0$  the middle point of the  segment of  $ \gamma_0$  between $p_1$ and $p_2.$
We denote by $\Gamma\subset \hi2$  the geodesic   passing through $p_0$ and
orthogonal to $\gamma_0$ (see Figure~\ref{Fig.Notations}).

\begin{figure}[!ht]
\centerline{\includegraphics[scale=0.4]{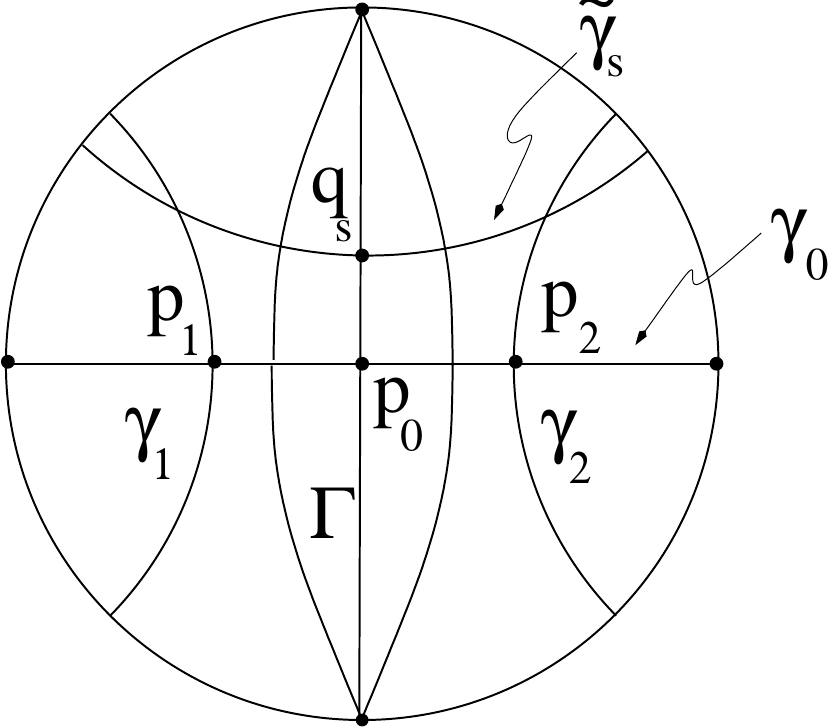}}
\caption{}\label{Fig.Notations}
\end{figure}

\bigskip

In the following lemmas the surface $\Sigma$  satisfies the hypothesis of
Theorem \ref{T.main theorem}.

\begin{lemma}\label{L.symetrie-1}
Suppose that  $\gamma_1$ and $\gamma_2 $  are geodesics satisfying  the properties
in
$(\ref{geodesiques disjointes})$. Then,
the surface
$\Sigma$ is symmetric with respect to the
vertical geodesic plane $\gamma_0 \times \R$ and
{   the closure of}
 each component of $\Sigma\setminus (\gamma_0 \times \R) $
is a horizontal graph with respect to
$\Gamma$.
\end{lemma}

\begin{proof}
  We choose an orientation on $\Gamma$.
For any $s\in \R$, we denote by $q_s$ the unique point in $\Gamma$
whose
signed distance to $p_0$ is $s$, thus $q_0=p_0$. For any $s\in \R,$ we denote
by
$\wt \gamma_s \subset \hi2$ the geodesic   orthogonal to $\Gamma$ and
passing
through $q_s.$ Observe that $\wt \gamma_0=\gamma_0$.

 For any $s\in \R,$ we set $Q_s:= \wt \gamma_s \times \R$. Moreover,
for  any $s\not=0,$ we denote by $Q_s^+$ the component of
$(\hi2\times \R) \setminus Q_s$ containing $\{p_0\} \times \R$ and  by
$Q_s^-$ the other component.  For any $s\not=0$ we set
$\Sigma_s^- := \Sigma \cap Q_s^-$,
$\Sigma_s^+ := \Sigma \cap Q_s^+$ and we denote by
$\Sigma_s^{-*}$ the reflection of $\Sigma_s^-$ across
the vertical geodesic plane $Q_s$.

As in the proof of Proposition \ref{P.non existence}
(Claim 1 and Claim 2),
we can show that for any $s \not=0$,
$\Sigma_s^-$ is a horizontal graph with respect to $\Gamma$ and that
$\Sigma_s^{-*}\cap\Sigma_s^+ =\emptyset$.

 Then, passing to the limit for
$s\to 0$ from both sides, we conclude  that
$\Sigma$ is symmetric with respect to
$\gamma_0 \times \R$  and that
 each component of $\Sigma\setminus (\gamma_0
\times \R) $
is a horizontal graph with respect to $\Gamma.$
\end{proof}

\begin{remark}
It follows from the proof of Lemma \ref{L.symetrie-1} that the tangent plane at
any point of $\Sigma\setminus (\gamma_0 \times \R) $ is never horizontal.
\end{remark}

\begin{lemma}\label{L.symetrie-2}
Suppose that  $\gamma_1$ and $\gamma_2 $  are geodesics satisfying  the properties
in
$(\ref{geodesiques disjointes})$. Then,
the surface  $\Sigma$ is symmetric with respect to the
vertical geodesic plane $\Gamma \times \R$  and
{   the closure of}
 each component of $\Sigma\setminus (\Gamma \times \R) $
is a horizontal graph with respect to
$\gamma_0$. {   Furthermore $\Sigma$ is embedded}.
\end{lemma}

\begin{proof}
 Let $d>0$ be the distance between $p_0$ and $p_1$, thus we have
$d=d_{\hi2} (p_0,p_1)=d_{\hi2} (p_0,p_2)$. For any $s \in [0,d]$ we denote by
 $\wt p_s \in \gamma_0$ the unique point
between $p_0$ and $p_1,$ whose distance to $p_0$ is $s.$  Thus $\wt p_0=p_0$ and
 $\wt p_d=p_1$.

 \smallskip

We denote by $\Gamma_s\subset \hi2$ the geodesic   orthogonal to $\gamma_0$
and
passing through $\wt p_s$, thus $\Gamma_d=\gamma_1$. We set
 $P_s:= \Gamma_s \times \R$.
For any $s \in [0,d \mathclose[$ we denote by $P_s^-$ the connected component
of
$(\hi2 \times \R)\setminus P_s$ containing $\{p_1\} \times \R$ and by
 $P_s^+$ the other component. We set $\Sigma_s^-:= \Sigma \cap P_s^-$ and
$\Sigma_s^+:= \Sigma \cap P_s^+$. Furthermore, $\Sigma_s^{-*}$ denotes the
reflection of
$\Sigma_s^-$ across $P_s$. We give to the geodesic $\gamma_0$ the
orientation going from
$p_1$  to  $p_2$.

We will say that $\Sigma_s^{-*} \leq\Sigma_s^+$ if
$\Sigma_s^{-*}$ remains under $\Sigma_s^+$ with respect to the orientation
of $\gamma_0$.

{   As in the proof of Proposition \ref{P.non existence},}
it can be shown that there
exists $\varepsilon >0$ such that for any $s\in [d-\varepsilon, d\mathopen[\,$,
$\Sigma_s^-$ is a horizontal graph with respect to $\gamma_0$. Therefore,
for any $s\in [d-\varepsilon/2, d\mathopen[$, we have $\Sigma_s^{-*}
\leq\Sigma_s^+$.

We set
\begin{equation*}
 I=\big\{ s\in [0,d] \mid \Sigma_r^{-*} \leq\Sigma_r^+ \ \text{for any}\
r \in \mathclose]d-s, d\mathopen[ \big\}.
\end{equation*}
We have $I\not= \emptyset$, since $\varepsilon/2 \in I$. We set
$s_0:= \sup I$, we want to prove that $s_0=d$.

\smallskip

Assume that  $s_0\not=d$. By continuity we get
that $\Sigma_{s_0}^{-*} \leq\Sigma_{s_0}^+.$ On the other hand we have
$\Sigma_{s_0}^{-*} \not= \Sigma_{s_0}^+,$ since the asymptotic boundaries of
those two
parts are not equal. Observe that $\partial\Sigma_{s_0}^{-*} = \Sigma \cap
P_{s_0}$ is
compact and the boundary maximum principle shows that $\Sigma$ is
never
orthogonal
to $ P_{s_0}$ along their intersection. We deduce that there exists
$\varepsilon_1 >0$
such that $\Sigma_{d-s_0-\varepsilon_1  }^-$ is a horizontal graph with respect
to
$\gamma_0$ and $    \Sigma_r^{-*} \leq\Sigma_r^+ $ for any
$r \in \mathclose]d-s_0-\varepsilon_1, d\mathopen[  $, which gives a
contradiction with
the maximality of $s_0$. We deduce that $s_0=d$ and then $ \Sigma_0^{-*}
\leq\Sigma_0^+ $.

 Using the same arguments coming from the other side, that is from $p_2$ to
$p_0$, we can
show that $\Sigma_0^{+*} \geq\Sigma_0^- $. We conclude that $\Sigma_0^{+*}
=\Sigma_0^-$,
that is $\Sigma$ is symmetric with respect to $P_0=\Gamma\times \R$,
as desired.

{   The proof that $\Sigma$ is embedded can be established in the same way as
in \cite[Theorem 2]{Schoen}}.
\end{proof}

\bigskip

\begin{remark}
  In  \cite[Proposition 2.4]{MMR}, F. Martin, R. Mazzeo and M. Rodriguez
have given an independent proof of Lemmas  \ref{L.symetrie-1} and
\ref{L.symetrie-2}.
\end{remark}

\bigskip

\begin{lemma}\label{L.symetrie-3}
Suppose that  $\gamma_1$ and $\gamma_2 $  are geodesic satisfying the properties
in
$(\ref{geodesiques disjointes})$. Then, the surface $\Sigma$ is symmetric
with respect
to
some slice $\hi2 \times \{t_0\}$ and {   the closure} each component of
$\Sigma \setminus (\hi2 \times \{t_0\})$ is a vertical graph.
\end{lemma}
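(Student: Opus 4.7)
The plan is to adapt the Alexandrov moving plane argument of Lemmas \ref{L.symetrie-1} and \ref{L.symetrie-2} to horizontal slices $\Pi_t:=\hi2\times\{t\}$. For $t\in\R$, let $\sigma_t$ be the reflection across $\Pi_t$, set $\Sigma_t^+:=\Sigma\cap\{h>t\}$, $\Sigma_t^-:=\Sigma\cap\{h<t\}$, and $\Sigma_t^{+*}:=\sigma_t(\Sigma_t^+)$. The goal is to produce $t_0\in\R$ such that $\Sigma_{t_0}^{+*}=\Sigma_{t_0}^-$; the symmetry statement is immediate from this equality, and the vertical graph conclusion will be a corollary.

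The first step is to secure a starting position, namely that $\Sigma_t^{+*}\cap\Sigma_t^-=\emptyset$ for every $t$ sufficiently large. Since each end is asymptotic to a vertical geodesic plane, the discussion immediately after Lemma \ref{L.Polynome} forces each end to have degree $m=0$, so the full asymptotic package of Section \ref{Sec.End} applies. For $|t|$ large, $\Sigma\cap\Pi_t$ is a disjoint union of two analytic curves, one on each end, converging in the $C^1$ topology to $\gamma_1$ and $\gamma_2$ (Theorem \ref{T.Description}(4)); their geodesic curvature decays like $e^{-c_1|z|}$ (Proposition \ref{P.courbure}), and the normal direction is controlled by Proposition \ref{P.Estimate}. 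Writing $h=2\,\Im\,W$ with $\sqrt{\phi}=1+i\beta/z+O(1/z^{2})$, each end naturally decomposes into an upper branch (on which $h\to+\infty$) and a lower branch (on which $h\to-\infty$). I would combine these exponential estimates with the explicit expression for $h$ to show that, for $t$ large, the reflected upper branches are disjoint from the lower branches and from the compact central part of $\Sigma$.

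Once the starting position is secured, I would set
\[
t_0:=\inf\bigl\{t^{*}\in\R\,\big|\,\Sigma_s^{+*}\cap\Sigma_s^-=\emptyset\text{ for every }s\geq t^{*}\bigr\}.
\]
The infimum is finite: otherwise, pushing $t$ all the way to $-\infty$ would reflect the entire upper part of $\Sigma$ into a region avoiding $\Sigma^{-}$, contradicting the existence of the compact central part of $\Sigma$ already constrained by Lemmas \ref{L.symetrie-1} and \ref{L.symetrie-2}. By continuity, first contact is achieved at $t=t_0$, either at an interior point of both $\Sigma_{t_0}^{+*}$ and $\Sigma_{t_0}^-$ (where they are tangent from one side), or at a point of $\Sigma\cap\Pi_{t_0}$ where the common tangent plane is forced to be orthogonal to $\Pi_{t_0}$. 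In either case the interior or boundary maximum principle for minimal surfaces yields $\Sigma_{t_0}^{+*}=\Sigma_{t_0}^-$, i.e.\ the desired symmetry across $\Pi_{t_0}$.

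The graph statement is then a short corollary. By Proposition \ref{P.Estimate} horizontal tangent planes on $\Sigma$ are isolated, and the moving plane analysis forces every such point to lie on $\Pi_{t_0}$; hence the height function $h$ has no critical point on $\Sigma\setminus\Pi_{t_0}$, so each component is locally a vertical graph, and globally a graph once combined with the horizontal graph conclusions of Lemmas \ref{L.symetrie-1} and \ref{L.symetrie-2}. The main technical obstacle throughout this plan is precisely the starting position: because the upper and lower branches of a single end share the same asymptotic vertical plane and are therefore uniformly close in the ambient metric at large $|h|$, separating them after reflection requires invoking the full strength of the exponential estimates of Section \ref{Sec.End}.
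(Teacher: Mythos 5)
Your overall strategy (Alexandrov reflection across horizontal slices $\Pi_t$) is indeed what the paper uses, but there is a genuine gap exactly at the step you yourself flag as the ``main technical obstacle'': securing the starting position and, more importantly, maintaining disjointness of $\Sigma_t^{+*}$ and $\Sigma_t^-$ as $t$ decreases. The difficulty is not a matter of sharpening the exponential decay in Section~\ref{Sec.End}. For every $t$, the reflected upper piece and the lower piece of a single end have the \emph{same} asymptotic boundary $\pain\gamma_i\times\{t\}$ in $\hi2\times\R$, so the reflected and unreflected pieces cannot be separated by any quantitative estimate: they approach each other at infinity, and the usual first-contact argument breaks down because tangency may occur (or be ``pushed'') only at infinity. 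Proposition~\ref{P.Estimate} and Proposition~\ref{P.courbure} control normals and curvatures but do not decide on which side of the other one branch stays, which is the structural obstruction here.

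The paper resolves this with two ideas absent from your plan. First, it does \emph{not} reflect the whole surface: using Lemma~\ref{L.symetrie-2}, it cuts $\Sigma$ along the vertical plane $\Gamma\times\R$ and runs the Alexandrov argument only on one half $\Sigma_1$ (the part asymptotic to $\gamma_1\times\R$), whose boundary $\mathcal{C}=\Sigma\cap(\Gamma\times\R)$ is a compact union of Jordan curves. This replaces the free slicing by a reflection with an explicit compact ``anchor'' at $t_1:=\max\{t:\Pi_t\cap\mathcal C\neq\emptyset\}$. Second, and crucially, to compare $\Sigma_{1,t}^{+*}$ with $\Sigma_{1,t}^-$ for $t\geq t_1$ (Claim~1 of the paper's proof), the paper does not try to establish disjointness directly but applies a one-parameter family of horizontal translations $T_s$ along $\gamma_0$. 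Because $\Sigma_1$ is a horizontal graph with respect to $\gamma_0$ (Lemma~\ref{L.symetrie-2}), translating $\Sigma_{1,t}^{+*}$ by $T_\varepsilon$ with $\varepsilon>0$ pushes its asymptotic boundary strictly away from that of $\Sigma_{1,t}^-$, turning a potential contact at infinity into a compact first-contact problem, to which the interior maximum principle applies. This translation device, combined with the reduction to the half-surface $\Sigma_1$, is the missing idea; without it, the vertical Alexandrov sweep you propose on the whole $\Sigma$ cannot be started or propagated. The final step in the paper (using the compactness of $\mathcal C$ to find the symmetry level $t_0\leq t_1$ and then invoking the interior or boundary maximum principle) is close to the end of your sketch, and your short argument for the vertical-graph conclusion is fine once the symmetry is established.
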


\begin{proof}
 {   From Lemma \ref{L.symetrie-2}
we know that:  $\Sigma$ is embedded,
 each component of
$\Sigma\setminus (\Gamma\times \R)$ is a
horizontal graph with respect to $\gamma_0$ and that
$\Sigma$ is symmetric
with respect to the geodesic vertical plane $\Gamma\times \R$.

We first deduce that
$\Sigma$ is transversal to $\Gamma\times \R$, then that $\Sigma$ is actually
orthogonal to $\Gamma\times \R$. Therefore the intersection
$\mathcal{C}:=\Sigma \cap (\Gamma\times \R)$ is composed of a
finite number of Jordan
curves. Since  each component of
$\Sigma\setminus (\gamma_0\times \R)$ is a
horizontal graph with respect to $\Gamma$ (see Lemma \ref{L.symetrie-1}), we
get that the interiors of the Jordan curves of $\mathcal{C}$  are
pairwise disjoint.}

\smallskip

For $i=1,2$, we call $\Sigma_i$ the component of
$\Sigma\setminus (\Gamma\times \R)$
which is asymptotic to the vertical plane $\gamma_i \times \R$. For any
$t\in \R$,
we set $\Pi_t := \hi2 \times \{t\}$. Let $t_1\in \R$ be such that
$\partial\Sigma_1 \cap \Pi_t =\emptyset$ for any $t >t_1$ and
$\partial\Sigma_1 \cap \Pi_{t_1 }\not=\emptyset$. Such a $t_1$ exists since
$\partial\Sigma_1= \mathcal{C}$ is compact.

For any $t\in \R$ we set: $\Pi_t^+:= \hi2 \times \{s \mid s>t \}$,
$\Pi_t^-:= \hi2 \times \{s \mid s< t \}$,
$\Sigma_{1,t}^+=\Sigma_1 \cap \Pi_t^+ $,
$\Sigma_{1,t}^-=\Sigma_1 \cap \Pi_t^- $ and  we denote by
$\Sigma_{1,t}^{+*}$ the reflection of $\Sigma_{1,t}^+$ across $\Pi_t$.
Moreover, $\Sigma_{1,t}^{+*} \geq \Sigma_{1,t}^-  $ means that
$\Sigma_{1,t}^{+*}$ stays above $\Sigma_{1,t}^- $.

\

 {\em {\bf Claim 1.} For any $t\geq t_1$ we have
$\Sigma_{1,t}^{+*} \geq \Sigma_{1,t}^-  $. Consequently, $\Sigma_{1,t_1}^+$ is
a
vertical graph.}

\smallskip

Indeed, for $t>t_1$ we know from Lemma \ref{L.symetrie-2}
that the intersection
$\Sigma_1 \cap \Pi_t$ is a complete curve, that is
 a horizontal graph with respect to
$\gamma_0$ and whose  asymptotic boundary
is $\pain \gamma_1 \times \{t\}$.
 For any $s\in \R$ we denote by $T_s$ the horizontal translation
along
$\gamma_0$ of signed length $s$ in the direction going from $p_2$  to  $p_1$.

Suppose that $\Sigma_{1,t}^{+*}$ does not remain above $\Sigma_{1,t}^-  $.
Then,
for $\varepsilon >0$ small enough, the translated $T_\varepsilon
(\Sigma_{1,t}^{+*})$
does not remain above $\Sigma_{1,t}^-  $. Observe that
$\pain T_\varepsilon (\Sigma_{1,t}^{+*}) \cap
\pain \Sigma_{1,t}^-=~\emptyset$.
Moreover,
 $\partial T_s (\Sigma_{1,t}^{+*}) \cap \ov{   \Sigma_{1,t}^- }=\emptyset$,
for any $s>0$, since $\Sigma_1$ is a horizontal graph with respect to
$\gamma_0$. We deduce that, for $\varepsilon >0$ small enough, the part of
$\Sigma_{1,t}^-  $ which remains above $T_\varepsilon (\Sigma_{1,t}^{+*})$ has
compact
closure. Therefore, there exists $s_1>0$ such that
$ T_s (\Sigma_{1,t}^{+*}) \cap  \Sigma_{1,t}^-  =\emptyset$ for any $s>s_1$
and
$ T_{s_1} (\Sigma_{1,t}^{+*}) \cap  \Sigma_{1,t}^-  \not=\emptyset$. This means
that
$ T_{s_1} (\Sigma_{1,t}^{+*})$  and $ \Sigma_{1,t}^- $ are tangent at some
point and
one surface remains in one side of the other, which gives a contradiction with
the
maximum principle and proves the claim.

\

 {\em {\bf Claim 2.} For any $\varepsilon >0$ small enough, we have
$\Sigma_{1,t_1 -\varepsilon}^{+*} \geq \Sigma_{1,t_1-\varepsilon}^-  $.}

\smallskip

For any $ s\in \R$, let $T_s$ be the horizontal translation
defined as in Claim 1.
Since {   the closure of} $\Sigma_1$ is a
horizontal
graph with respect to $\gamma_0$, we have
$T_s \left( \mathcal{C}\right) \cap \Sigma_1=\emptyset $ for any $s>0$.
Furthermore,
since the whole surface $\Sigma$ is symmetric with respect to $\Gamma \times \R$
we have
that $T_s \left( \mathcal{C}\right) \cap \Sigma_1=\emptyset $ for any $s\not=0$.
Let
$\mathcal{D}\subset \Gamma \times \R $ be the bounded subset with
boundary
$\mathcal{C}$.
Since
$\Sigma$ is connected, we have $T_s (\mathcal{D}) \cap \Sigma=\emptyset$ for
any
$s\not=0$.

Let $\varepsilon>0$ such that
$\mathcal{C}_{t_1 -\varepsilon}^{+*} \geq \mathcal{C}_{t_1-\varepsilon}^-$,
where $\mathcal{C}_{t_1 -\varepsilon}^{+,*}$ etc. are obviously defined.
Then, using an argument analogous to that of Claim 1,
considering translations $T_s$,
it can be shown that
$\Sigma_{1,t_1 -\varepsilon}^{+*} \geq \Sigma_{1,t_1-\varepsilon}^-  $. This
proves
Claim 2.

\

 Now we can conclude the proof.

Since $\mathcal{C}$ is composed of a finite number of Jordan curves, there
exists a
component, say $C$, and a real number $t_0 <t_1$ satisfying
$\Sigma_{1,t}^{+*} \geq \Sigma_{1,t}^-  $ for any $t>t_0$,
such that at least one of the following properties occurs:
\begin{enumerate}
 \item $C_{t_0}^{+*} \geq C_{t_0}^-  $ and  $C_{t_0}^{+*} $ is tangent to
$C_{t_0}^- $
at some interior point.

\item $C_{t_0}^{+*} \geq C_{t_0}^-  $ and  $C_{t_0}^{+*} $ and $C_{t_0}^- $ are
tangent along their common boundary.
\end{enumerate}
Recall that $\Sigma$ is orthogonal to $\Gamma\times \R $ along
$\mathcal{C}$, since
$\mathcal{C}=\Sigma \cap (\Gamma\times \R)$ and $\Sigma$ is symmetric with
respect to
$\Gamma\times \R$.

Thus, in the first case, applying the boundary maximum principle
to the
surfaces
$\Sigma_{1,t_0}^{+*}$  and $\Sigma_{1,t_0}^-  $, we conclude
that $\Sigma_{1,t_0}^{+*} = \Sigma_{1,t_0}^-  $ and then
$\Sigma_{t_0}^{+*} = \Sigma_{t_0}^-  $.

In the second case we apply
the boundary maximum principle to the surfaces
$\Sigma_{t_0}^{+*}$ and
$\Sigma_{t_0}^- $ in order to infer $\Sigma_{t_0}^{+*} = \Sigma_{t_0}^-  $.

Consequently, the surface $\Sigma$ is symmetric with respect to the horizontal
plane
$\Pi_{t_0}=\hi2 \times \{t_0\}$, as desired.
\end{proof}

\begin{remark}
It follows from the proof of Lemma \ref{L.symetrie-3} that the tangent plane at
any point of
$\Sigma\setminus (\hi2 \times \{t_0\}) $ is never
vertical.
\end{remark}

\begin{proof}[Proof of  Theorem \ref{T.main theorem}]
 The maximum principle shows that $\gamma_1 \not= \gamma_2$, since $\Sigma$ is
not a
vertical plane.
We know from
Proposition \ref{P.non existence} that
$\pain \gamma_1 \cap \pain\gamma_2 =\emptyset$  and from
Proposition \ref{P.non existence 2}  that
$ \gamma_1 \cap \gamma_2 =\emptyset$.
 Thus, the geodesics
$\gamma_1$ and $\gamma_2$ satisfy the properties (\ref{geodesiques disjointes}).
{   Therefore we deduce from Lemma  \ref{L.symetrie-2} that $\Sigma$ is
embedded.}

Furthermore, we deduce from Lemmas \ref{L.symetrie-1},
\ref{L.symetrie-2} and
 \ref{L.symetrie-3} that $\Sigma$ is symmetric with respect to the vertical
planes
$\gamma_0 \times \R$ and $\Gamma \times \R$ and also with respect to the
 slice $\Pi_0:=\hi2 \times \{0\}$ (up to a vertical translation).

We call $S_0$ the reflection across the slice $\Pi_0$, $S_\Gamma$
the reflection across
 the vertical plane $\Gamma \times \R$ and $S_{\gamma_0}$
the reflection across
 the vertical plane $\gamma_0 \times \R$.

\

For any real number $s\not=0$, we denote by $\Gamma_s$ the equidistant line
to $\Gamma$ with distance equal to $|s|$, which intersects $\gamma_0$ between
$p_0$ and $p_1$ (resp. $p_0$ and $p_2$) if $s>0$ (resp. $s<0$). We set
$\Gamma_0=\Gamma$.
For any $s\in \R$, we set $P_s:=\Gamma_s \times \R$.

\

We define $\Sigma^+:= \Sigma \cap (\hi2 \times \mathclose] 0, +\infty\mathopen[).$
Since $\Sigma^+$ is a vertical graph, the tangent plane is never vertical along
 $\Sigma^+$. Consequently  $\Sigma^+$ intersects any $P_s$
transversally. Since
each component of  $\Sigma^+ \setminus (\gamma_0\times \R)$ is a horizontal
graph with
respect to
$\Gamma$ and since $\Sigma$ is symmetric with respect to $\Pi_0$ and
$\gamma_0\times \R$,
we deduce that for
any $s\in \R$ the intersection $\Sigma \cap P_s$ consists of a Jordan curve.
 Therefore,  $\Sigma$ is homeomorphic to an annulus. Since $\Sigma$ has finite
total
curvature, we get that $\Sigma$ is conformally parametrized by
$\C^*:=\C\setminus \{0\}$.

\

 Let $X: \C^* \rightarrow \Sigma \subset \hi2 \times \R$ be a conformal
parametrization of $\Sigma$. {   Since $\Sigma$ is embedded we may assume that
$X$ is an embedding. We deduce from Lemma \ref{L.degre zero} that each end of
$\Sigma$ has degree zero.}

\

The symmetry $S_\Gamma$ corresponds to a anticonformal
diffeomorphism
$s_\Gamma : \C \cup\{\infty\} \rightarrow \C \cup\{\infty\}$ satisfying
$s_\Gamma(0)=\infty$ and $s_\Gamma(\infty)=0$. Since the set  of
fixed
points of $S_\Gamma$ in $\Sigma$ is a Jordan curve, the set of fixed points of
$s_\Gamma$ is
a circle
$c_\Gamma$. Up to a conformal change of coordinates, we can assume that
$c_\Gamma\subset\C$ is the unit circle centered at  the origin. Thus, we get
$s_\Gamma (z)=1/\ov z$ for any $z\in \C^*$.

Then, we denote by
$s_{\gamma_0} : \C \cup\{\infty\} \rightarrow \C \cup\{\infty\}$ the
anticonformal
diffeomorphism corresponding to $S_{\gamma_0}$. The  set  of fixed points of
$s_{\gamma_0}$ in
$\C^*$ is a straight line $L_\gamma$ passing to and punctured at  the origin. Up to a
rotation we
can assume that
 $L_\gamma=\{ \Re\, z=0\}$. Thus, we have $s_{\gamma_0}(z)=
-\ov z$
for any $z\in \C$.

At last, let us call $s_0 : \C \cup\{\infty\} \rightarrow \C \cup\{\infty\}$ the
anticonformal
diffeomorphism corresponding to $S_0$.  The  set of fixed points of $s_0$ in
$\C^*$ is a straight line $L$ passing and punctured at  the origin. Since we have
$(S_0 \circ S_{\gamma_0}) \circ (S_0 \circ S_{\gamma_0})= Id$ on $\Sigma,$ we
must have
$(s_0 \circ s_{\gamma_0}) \circ (s_0 \circ s_{\gamma_0})= Id$ on $\C^*$. Thus,
$L$ must be
orthogonal
to
$L_\gamma$ and we get $L=\{ \Im\, z=0\}$. Then, we have $s_0(z)= \ov z$
for any $z\in \C$.

We call $P_0^+$ the component of $(\hi2 \times \R) \setminus (\Gamma\times\R)$
containing
$\gamma_1 \times \R$ and we set $\Sigma^{++}:=\Sigma^+\cap P_0^+$. Finally, we
call
$\Sigma_0$ any of the two components of $\Sigma^{++}\setminus (\gamma_0
\times \R)$.
Thus,  we recover the whole surface $\Sigma$ by applying the
symmetries $S_0$, $S_\Gamma$ and $S_{\gamma_0}$ to
the closure of $\Sigma_0$.

We can assume that $\Sigma_0$ is parametrized by the subset
\begin{equation*}
 U_0:=\{z\in \C \mid |z| >1, \ \Re\, z<0,\ \Im\, z>0\}.
\end{equation*}
Since $\Sigma_0$ is simply connected, we can consider its conjugate $\Sigma_0^*$
which is
a minimal surface in $\hi2 \times \R$ uniquely defined up to an ambient
isometry.

From now on, for any object $x$ relative to $\Sigma_0$ we denote by
$x^*$ the corresponding object relative to the conjugate surface $\Sigma_0^*$.
Thus, $X^*:= (F^*,h^*):U_0 \rightarrow \Sigma_0^*\subset \hi2\times \R$
is a conformal parametrization of $\Sigma_0^*$.

Observe that the boundary of $\Sigma_0$ is composed of three arcs:
\begin{enumerate}
\item A semi-complete curve $b_1 \subset (\gamma_0  \times\R)$ with
boundary point $q$.
\item A compact arc $b_2 \subset (\Gamma \times \R)$ with boundary $q$ and
$\wt q$.
\item A semi-complete curve $b_3\subset \Pi_0$ with boundary  point $\wt q$.
\end{enumerate}

In order to visualize the following discussion we consider the model of the unit
disk for $\hi2$ (see Figure \ref{Fig.Conjugue}).

Up to an isometry we can assume that $q^*=0\in \hi2.$
\begin{figure}[!ht]
\centerline{\includegraphics[scale=0.4]{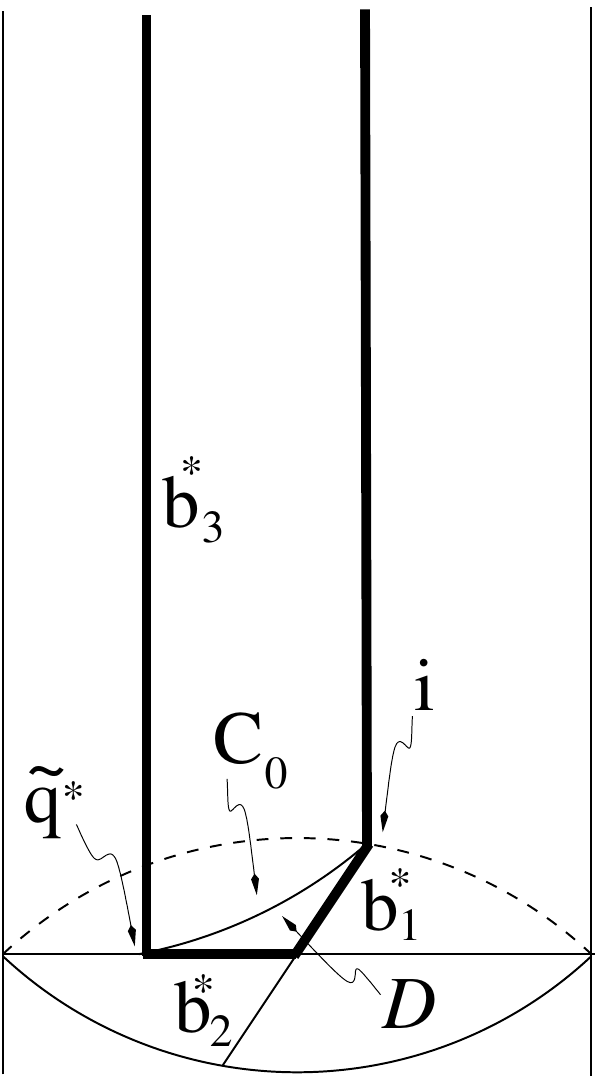}}
\caption{}\label{Fig.Conjugue}
\end{figure}

Since $b_1$ is
contained in a vertical plane, its conjugate $b_1^*$ must be a horizontal  half-geodesic
 issue
from $0$  (see \cite[end of Section 4.1]{Daniel}). We can assume that $\pain
b_1^* = \{i\}$.

Moreover, since  $b_2$ is
contained in a vertical plane, its conjugate $b_2^*$ must be
a compact geodesic arc  orthogonal to $b_1^*$ with endpoints $\wt q^*$ and $ 0.$
Finally, since  $b_3$ is
contained in a horizontal plane, its conjugate $b_3^*$ must be a vertical
half-geodesic
issue from $ \wt q^*$. We can assume that  $b_3^*= \wt q^* \times \{t\geq 0 \}$.

We denote by $C\subset \hi2$ the geodesic   passing through $ \wt q^*$,
having
$i$ in its asymptotic boundary and we denote by $C_0$
{   the half-geodesic of $C$ issue from  $ \wt q^*$ verifying
$\pain C_0 = \{i\}$}
We call
$\mathcal{D}$ the domain of $\hi2$ bounded by $b_1^*$, $b_2^*$ and $C_0$ such
that
$\pain \mathcal{D}=\{ i\}$.

\smallskip

In order to prove that $\Sigma$ is a  horizontal catenoid,
it is enough to prove that
$\Sigma_0^*$ is a vertical graph over  $\mathcal{D}$, with infinite data
on
$C_0$ and zero data on the two sides $b_1^*$ and $b_2^*$  (see  \cite{P}).

\smallskip

We call $\sigma $ the reflection in $\hi2 \times \R$ across the vertical
geodesic containing $b_3^*$.

Since $\Sigma_0^*$ is parametrized by $U_0$, the interior of
$\ov \Sigma_0^* \cup \sigma(\Sigma_0^*) $ is parametrized by
\begin{equation*}
 \wt U_0 :=\big \{z\in \C \mid |z|>1,\ \Re\, z<0 \big\}.
\end{equation*}

We recall that $\phi^*=-\phi$, $h^*=-2\,\Re\, W$, since $h=2\,\Im\, W$ and
$h^*$ is the harmonic conjugate of $h.$ Thus
$W^*=-iW$. {
We may suppose that $h^* =0$ on $b_1^* \cup b_2^*$. We are able to study
the behavior of
$F^*$ and  $h^*$ on $\wt U_0 $ in the same way as we did for $F$ and $h$ in
Section \ref{Sec.End}. Recall that $m=0$ for the end of $\Sigma$
parametrized by $\{ |z| >1\}$.

\smallskip

Since the interior of $\ov \Sigma_0\cup S_0 (\Sigma_0)$ is a horizontal graph
with respect to $\Gamma$, we get that the tangent plane along it is never
horizontal. Thus we get $\phi \not=0$ on  $\wt U_0$, and since $(h_z)^2=-\phi$,
 we get also that $h_z \not= 0$  on  $\wt U_0$. Therefore
$h$ is strictly monotonous along any level curve of $h^*$. Also, since
$h\equiv 0$ on $L^-:= \{\Im\, z=0\} \cap \wt U_0 $, we get that $h^*$ is
strictly monotonous  and unbounded along $L^-$.

We deduce that for any $\mu >0$, the level set $\{h^* = \mu\}$ is composed of
a
unique complete curve $L_\mu \subset \wt U_0$: a part of $L_\mu$ has the ray
$\{ri,\ r >0\}$ as asymptotic direction and the other part has the ray
$\{ri,\ r <0\}$ as asymptotic direction. Furthermore $L_\mu$ intersects $L^-$
at a unique point. We deduce also that for any $\mu <0$, the level set
$\{h^* = \mu\}$ is empty.}

\

Since $h$ is
strictly monotonous along any curve $L_\mu$, we deduce that $W^*$ is one-to-one
on
$\wt U_0 $. The results established in Theorem \ref{T.Description}
 yields  that the curves $F^*(L_\mu)$ in $\hi2$
converge to
the geodesic   $C$ when $\mu \to +\infty$. {   We deduce that
$\pain \Sigma_0^+ \subset\{ i\} \times \R$.}

\

Let us call $B_i$ the geodesic  containing $b_i^*$, $i=1,2$.

\

 {\em {\bf Claim 1.} We have $\Sigma_0^* \subset \mathcal{D} \times \R
$.}

\smallskip

By construction,  $B_1$ and $B_2$ meet orthogonally at  the origin and
$B_2$  is the geodesic with asymptotic points $(-1,0),$ $(1,0).$   For $s>0$, we call $Q_s \subset \hi2 \times \R$ the
vertical
plane orthogonal to  $B_2$ at $(s,0)$ and we call $Q_s^-$ the component
of $(\hi2 \times \R)\setminus Q_s$ which does not contain  $\mathcal{D} \times
\R$.

Recall that $\pain \Sigma_0^* \subset \{i\}\times \R$. Moreover, if $(z_n)$ is
a
sequence in
$U_0$ such that $h^*(z_n)\to +\infty$, then we have $d_{\hi2}\big(F^* (z_n),C
\big)\to
0$.
Consequently, for any  $s>0,$ the intersection $\Sigma_0^* \cap Q_s^-$ is either
empty or
have compact closure. Assume the latter is true.  In this case,
we find $s$ such that
$\Sigma_0^* \cap Q_s^- =\emptyset.$  Then, we start to decrease $s$. By the maximum
principle, we can decrease $s$ till $0$ and obtain that
$\Sigma_0^* \cap Q_s^- =\emptyset$ for any $s>0$. Therefore, $\Sigma_0^*$ remains in the component of
$(\hi2 \times \R)\setminus (B_1\times\R)$ containing $\mathcal{D} \times \R$.

By the same reasoning as above we can prove that  $\Sigma_0^*$ remains in
the component of
$(\hi2 \times \R)\setminus (B_2\times\R)$ containing $\mathcal{D} \times \R$
and
also  $\Sigma_0^*$ remains in the component of
$(\hi2 \times \R)\setminus (C\times\R)$ containing $\mathcal{D} \times \R$.
We conclude that  $\Sigma_0^* \subset \mathcal{D} \times \R $.

\

 {\em {\bf Claim 2.} We have $F^* (U_0)\subset \mathcal{D}$.
Furthermore,
the map $F^* : U_0 \rightarrow \mathcal{D}$ is  proper.}

\smallskip

Let $Pr : \hi2 \times \R \rightarrow \hi2$ be the projection on the first
component. Since
$F^*= Pr \circ X^*$ we deduce from Claim 1 that $F^* (U_0)\subset \mathcal{D}$.

We must prove that for any compact  set
$K\subset \mathcal{D}$,
$(F^*)^{-1} (K)$ is a compact subset of $U_0$.  In
order to prove it, it is enough to show that for any sequence
$(z_n)$ in  $(F^*)^{-1} (K)$, there is a subsequence of $(z_n)$ converging
in
$(F^*)^{-1} (K)$.

Since $K$ is far from the geodesic   $C$, the  height function is bounded on
$K$.
Therefore,  there exists a constant $\mu>0$ such that $(F^*)^{-1} (K)$
remains in the subset of $U_0$ bounded by the level set $L_\mu$ and the
half-axis
$\{iy \mid y>0 \}$.  Let $(z_n)$ be a sequence in $(F^*)^{-1} (K)$. Suppose that $(z_n)$ is not
bounded.
Therefore, there exists a subsequence
$(z_{\varphi (n)})$ of $(z_n)$
such that  $|z_{\varphi (n)}| \to +\infty$. As in the proof of
Theorem \ref{T.Description} (Assertion 2), we can show that there exists a
sequence
$(iy_n)$ such
that
$y_n \to +\infty$ and $d_{\hi2}\big(F^*(z_{\varphi (n)}), F^*(iy_n)\big)\to 0$.
But this
is {   absurd} since we have, by construction, $F^*(z_{\varphi (n)}) \in K$
and
$F^*(iy_n)\to i\in\pain \hi2$.

Thus  $(z_n)$ is a bounded sequence of $U_0$. Therefore, since $U_0\subset \C$,
we can
extract a subsequence $(z_{\psi (n)})$ converging to some point $z\in \ov U_0$.
We want
to show that $z \in (F^*)^{-1} (K) $.

Observe that $F^*$ maps the boundary of $U_0$ onto the boundary of
$\mathcal{D}$
and that $d_{\hi2}\big( K,\partial\mathcal{D}\big)>0 $, since $K$ is
a compact
subset
of $\mathcal{D}$. Therefore,  we deduce  that $z\in U_0$. Since $F^*$ is continuous
and
$K$ is
compact,  we obtain $F^*(z) \in K$, from which we get $z \in (F^*)^{-1} (K) $.
Therefore
$F^* : U_0 \rightarrow \mathcal{D}$ is  a proper map, as desired.

\

 {\em {\bf Claim 3.} We have $F^* (U_0)= \mathcal{D}$ and
$\Sigma_0^*$ is a vertical graph over $\mathcal{D}$.}

\smallskip

We know from Claim 2 that  $F^* (U_0) \subset\mathcal{D}$. Therefore,  it
suffices
to prove that $F^* (U_0)$ is a closed and open  subset of $\mathcal D.$

It is known that $n_3^*= \pm n_3$ (see \cite[Remark 9]{HST}). On the other hand,
since $\Sigma_0$
is a
vertical graph, we have $n_3 \not=0$ along $\Sigma_0$. We deduce that the
tangent plane
is never vertical along $\Sigma_0^*$ and that the map
$F^* : U_0 \rightarrow  \mathcal{D}\subset \hi2 $ is a local
smooth diffeomorphism.
Then,  $F^* $ is an open map. As $U_0$ is
open, we get that $F^* (U_0)$ is an open subset of $\mathcal{D}$.

Now we prove that $F^* (U_0)$ is also a closed subset of $\mathcal D.$

Let $(q_n)$ be a sequence in $F^* (U_0)$   converging to some point
$q\in\mathcal{D}$. We want to prove that $q\in F^* (U_0)$.

We set $K:=\{q\} \cup \{q_n,\ n \in \n\}$, then $K$ is a compact subset of
$\mathcal{D}$.
From Claim 2 we deduce that $(F^*)^{-1}(K)$ is a compact subset of $U_0$. For
any
$n\in \n$ there exists $z_n \in (F^*)^{-1}(K)$ such that
$F^*(z_n)=q_n$. Therefore, we can extract a subsequence $(z_{\varphi (n)})$
which
converges to some $z\in (F^*)^{-1}(K)\subset U_0$. Since $F^*$ is continuous we
obtain
$F^*(z_{\varphi (n)}) \to F^* (z)$, that is $q_{\varphi (n)} \to F^*(z)$. We
deduce that
$q=F^*(z)$ and then $q\in F^*(U_0)$, therefore   $F^* (U_0)$ is a closed subset of $\mathcal D.$
Consequently we get  $F^* (U_0)=\mathcal{D}$.

Hence, the map $F^* : U_0 \rightarrow  \mathcal{D}$ is
a local smooth diffeomorphism.  Moreover $F^*$ is  proper and surjective.
We deduce that it is a covering map. Since $\mathcal{D}$ is connected
and simply
connected and $U_0$ is
connected, we deduce that $F^*$ is a global diffeomorphism from $U_0$ onto
$\mathcal{D}$, that is $\Sigma_0^*$ is a vertical graph over $\mathcal{D}$,
 as desired.

\medskip

{   Since $h*$ is strictly monotonous and non bounded along $L^-$, we obtain
that $\Sigma_0^*$ is a vertical graph over $\mathcal{D}$ with infinite data on
$C_0$ and zero data on $b_1^*$ and $b_2^*$, this concludes the proof.}
\end{proof}



\section{Appendix. Basic geometry in $\hi2$}

In this section, we establish some background material about $C^2$-curves  in $\hi2$
 whose absolute value of geodesic curvature
is
strictly smaller
than one.  We observe that
the condition on the curvature implies that such a curve
is embedded, see for example
\cite[Proposition 2.6.32]{ST3}.

\begin{prop}\label{P.limit}
 Let $c:[0,+\infty\mathclose[ \rightarrow \hi2$ be a regular $C^2$-curve with
infinite
length.  Let $\kappa (t)$ be the geodesic curvature of $c$ at the point $c(t)$.
{   Assume that $\rvert \kappa(c(t)) \rvert < k <1$, for any $t\geq 0$, and
that $c$ is one-to-one.}

Then, the
curve
$C:= c([0,+\infty\mathclose[)$   has no limit
point in $\hi2,$  and the  asymptotic boundary of
$C$ consists of only one point
$\{p_\infty \}=\pain C $.
\end{prop}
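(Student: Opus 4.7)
The main idea is to exploit the comparison of $c$ with curves of constant geodesic curvature $\pm k$ in $\hi2$; by hypothesis $k<1$, such comparison curves are equidistant lines (hypercycles) to geodesics. Hypercycles are embedded, proper in $\hi2$, lie within bounded hyperbolic distance of their underlying geodesics, and each converges to a unique point of $\pain\hi2$ in each forward direction. The plan is to show that the hypothesis $|\kappa|<k<1$ forces the ray $c$ to inherit the same qualitative behavior.

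The key step will be to prove that $c$ is a \emph{quasi-geodesic ray}: there exist constants $A,B>0$ depending only on $k$ such that
\[
d_{\hi2}\bigl(c(t_1),\,c(t_2)\bigr) \;\geq\; A\,|t_1-t_2|\,-\,B,\qquad\text{for all } t_1,t_2\geq 0.
\]
To establish this, I would parametrize $c$ by arc length and, at each $s_0\geq 0$, consider the two hypercycles through $c(s_0)$ tangent to $c'(s_0)$ with constant geodesic curvatures $+k$ and $-k$; these bound a lens-shaped region. A comparison of the Frenet ODEs for $(c',n)$ with those of the two extremal hypercycles yields a uniform $\tau=\tau(k)>0$ such that $c([s_0,s_0+\tau])$ cannot escape this lens. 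Iterating the barrier on intervals of length $\tau$ produces linear displacement of $c$ along a geodesic direction, giving the desired inequality.

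Granted this estimate, the two conclusions of the proposition follow at once. First, if $c(t_n)\to p\in\hi2$ with $t_n\to +\infty$, then $d_{\hi2}(c(0),c(t_n))$ would stay bounded and the quasi-geodesic inequality would bound $t_n$, contradicting $t_n\to+\infty$; hence $C$ has no limit point in $\hi2$. Second, the quasi-geodesic estimate combined with a Morse-type argument in $\hi2$ shows that $c$ lies in a bounded tubular neighborhood of a geodesic ray $\gamma\subset\hi2$; since $\gamma$ has a unique asymptotic point $p_\infty\in\pain\hi2$, so does $c$, and therefore $\pain C=\{p_\infty\}$.

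The principal technical obstacle is the quasi-geodesic estimate: since $\kappa(s)$ is only bounded in modulus and may change sign, the curve can oscillate, so no single hypercycle can serve as a global barrier. The crucial structural input is the strict inequality $k<1$, which places $c$ strictly below the ``horocyclic regime'' $|\kappa|\equiv 1$ where circular accumulation is possible; it is precisely this strict gap that produces the divergence and the uniqueness of the limit point on $\pain\hi2$. Once the barrier/comparison argument is set up rigorously, the remaining steps are classical facts about quasi-geodesics in the hyperbolic plane.
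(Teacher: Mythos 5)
Your plan --- establish a quasi-geodesic estimate for $c$ and then invoke the Morse lemma --- is a genuinely different route from the paper's, which handles the two conclusions by two separate barrier arguments: first a comparison against horocycles (curvature exactly $1$) placed at a hypothetical interior limit point, then a sweeping argument with complete curves of constant curvature $k$ to isolate the asymptotic point. Your approach packages both conclusions into one quantitative estimate plus a standard fact about quasi-geodesics in $\hi2$, which is conceptually cleaner, but it transfers all the difficulty to the quasi-geodesic inequality, and that is where there is a gap.

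The problematic step is ``iterating the barrier on intervals of length $\tau$ produces linear displacement of $c$ along a geodesic direction.'' Confining $c([s_0,s_0+\tau])$ to the hypercycle lens based at $c(s_0)$ gives local control, but the lens at $s_0+\tau$ is built from the new tangent direction $c'(s_0+\tau)$, which in general differs from $c'(s_0)$; there is no fixed geodesic along which the local displacements add up, and concatenating lenses does not by itself exclude a slow drift of the tangent direction bringing $c$ back near an earlier point. You need a single quantity that grows linearly along $c$. One option is the local-to-global principle for quasi-geodesics in Gromov-hyperbolic spaces, but then you must still verify the local hypothesis at a scale compatible with the hyperbolicity constant, which is not automatic from a lens at a single point. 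A self-contained alternative: fix $\xi\in\pain\hi2$ so that $\nabla b_\xi$ is orthogonal to $c'(0)$ at $c(0)$, where $b_\xi$ is a Busemann function, and set $f(s)=b_\xi(c(s))$. Since $|\nabla b_\xi|\equiv 1$ and $\nabla^2 b_\xi$ equals the hyperbolic metric minus $db_\xi\otimes db_\xi$, the hypothesis $|\kappa|<k$ gives
\[
f''(s)\;\geq\;\big(1-f'(s)^2\big)-k\,\sqrt{1-f'(s)^2}\,,
\]
and from $f'(0)=0$ one deduces that $f'$ increases and is eventually bounded below by a positive constant depending only on $k$. Because $|b_\xi(p)-b_\xi(q)|\leq d_{\hi2}(p,q)$, this yields $d_{\hi2}\big(c(t_1),c(t_2)\big)\geq A\,|t_1-t_2|-B$ with $A,B$ depending only on $k$. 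With that inequality in hand, your deduction of the two conclusions of the proposition is correct.
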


\begin{proof}
$   $
If $k=0$ then $C$ is a part of a geodesic and the
assertions are obvious. Therefore we assume that $0<k<1$.

\

{\em {\bf Claim 1.}  $C$ has no limit point in $\hi2$.}

\smallskip

Indeed, assume  by contradiction that  there exists $p\in \hi2$ and a sequence of positive numbers
$(t_n)$
such that $t_n \to +\infty$ and $p_n:= c(t_n) \to p$ when $n\to \infty$.

Assume first that there  exists a point $q\in C$, $q=c(t_0)$ for
some $t_0>0$, such that
$C$ is  orthogonal  at $q$ to the geodesic   passing through $q$ and $p$. Let
$H_q\subset \hi2$ be the horocycle through $q,$ tangent to the curve $C,$ such
that  $p$ belongs to the convex component of $\hi2 \setminus H_q$.
Recall that $\rvert \kappa (c(t))\rvert <1 $ and the absolute value of
the
curvature of the horocycles is 1. Thus, the maximum principle for curves,
see \cite[Theorem 2.6.27]{ST3},
ensures that $C_0 := c([t_0, +\infty\mathclose[)$
belongs to the
non convex component of $\hi2 \setminus H_q$ and then, $p$ cannot not be in the
closure of $C$.

Hence we infer that the function $t \mapsto d_{\hi2}(c(t), p)$ is
strictly decreasing.

For $t>0$ we denote by $\alpha(t)\in[0,\pi]$, the nonoriented
angle at $c(t)$ between the tangent vector $c^\prime (t)$ and the geodesic
segment $[c(t), p]$. Since the function $t \mapsto d_{\hi2}(c(t), p)$ is
strictly decreasing we have  $\alpha(t)\in[0,\pi/2[$ for any $t>0$.

Actually, we have $\alpha (t) \rightarrow 0$ as $t \to +\infty$. Indeed, assume
by contradiction that there exist a sequence $(t_n)$, and a real number
$ \alpha_0 \in \,]0,\pi/2[$, such that $t_n\to +\infty$ and $\alpha (t_n)\to
\alpha_0$.
For any $n\in \n$, we denote  by $\gamma_n$ the geodesic of $\hi2$
through $c(t_n)$ tangent to $C$.
Let us denote by $H_{n}\subset\hi2$, $n\in \n$, the horocycle through $c(t_n)$
tangent to the curve $C$ and contained in the same  component
of
$\hi2 \setminus \gamma_n$ as $p$. Therefore, for $n$ large enough, the point
$c(t_n)$ is very close to $p$ and the angle $\alpha(t)$ is very close to
$\alpha_0$. This would imply, for $n$ large enough, that $p$ belongs to the
convex component of $\hi2 \setminus H_n$ and this would give again a
contradiction with the maximum principle for curves.
Therefore we get that $\alpha(t)\to 0$ as $t\to +\infty$.

\smallskip

To conclude the argument, we choose for $\hi2$  the model of the
unit disk equipped with the metric
$g_{\di}=\lambda^2 (z)\,\rvert dz \rvert^2$, where
$\lambda (z)=2/(1-\rvert z\rvert^2)$. We can assume that $p=0$ and that $C$ is
parametrized by arclength.

In polar coordinates we have $c(t)=(r(t)\cos \theta(t), r(t)\sin \theta(t))$
where $r(t)=\rvert c(t)\rvert >0$ and $\theta (t)\in \R$. We set
$\partial_r := (\cos \theta,\sin \theta)$.
Since $\alpha(t)\to 0$, we have
\begin{equation*}
 \frac{\langle c^\prime (t)\, ;\, \partial_r\rangle_\di}
{\rvert\partial_r\rvert_\di} \to -1,
\end{equation*}
where the scalar product and the norm are considered with
respect to the metric $g_\di$. From which we get that
$\lambda (c(t)) r^\prime (t) \to -1$.  Using that $c(t)\to p=0$ for $t\to\infty,$  we
obtain $ \lambda (c(t))\to 2$ for $t\to\infty$, therefore $r^\prime (t) \to -1/2$ and then
$r(t)\to -\infty$ as $t\to +\infty$. This is a contradiction and  this  concludes the
proof of the claim.

\

Since $C$ {   has infinite length} and  has no limit point in $\hi2$, we
deduce that its
asymptotic boundary
is not empty. Let $p_\infty \in \pain C$ be an asymptotic point of $C$.

\

 {\em {\bf Claim 2.}  $p_\infty$ is the unique asymptotic point of
 $C$.}

\smallskip

Let $\Gamma \subset \hi2$ be a complete curve with constant curvature $k$ such
that $p_\infty \not\in \pain \Gamma$ and $p_\infty$ belongs to the asymptotic
boundary of the convex component of $\hi2 \setminus  \Gamma$.

Let $\gamma \subset \hi2$ be a geodesic   intersecting $\Gamma$ such that
$p_\infty \in \pain \gamma$.
For any $s\in \R$, let  $\Gamma_s$ be the
translated copy of $\Gamma$ along $\gamma$ at distance $\rvert s \rvert$,
towards $p_\infty$ if $s>0$, and in the opposite direction otherwise.
We denote by $\Gamma_s^+$ the convex component of $\hi2 \setminus \Gamma_s$.
Thus, we have  $\Gamma_0=\Gamma$ and
$p_\infty \in \pain \Gamma_s^+$ for any $s \geq 0$.  Observe that $\cap_{s \geq 0} \pain \Gamma_s^+ =\{ p_\infty\}$.

  If we assume that   for any $s >0$, there exists
$t_s >0$ such that $c([t_s, +\infty\mathclose[) \subset \ov{\Gamma_s^+},$ then
 we deduce that $p_\infty$ is the unique asymptotic
point of $C,$ as desired.
Therefore,  we are left with the proof of  {    the assumption above}.

\smallskip

Suppose by contradiction that there exists $r >0$ such that for any $t >0$ the
curve
$c([t, +\infty\mathclose[)$ is not entirely contained in  $\ov{\Gamma_{r}^+}$.
Therefore, there is an arc $C_1\subset C$ such that
$\partial C_1 \subset \Gamma_r$ and $C_1 \cap  \Gamma_{r}^+ =\emptyset$, that
is $C_1$ stays outside  $\Gamma_{r}^+$. Note that $\ov C_1$ is a compact arc
with boundary on $\Gamma_r$.
Considering the curves $\Gamma_s$, for
$s$ going from $r$ to $-\infty$, we get a real number  $\rho < r$ such that
$C_1 \subset \ov{\Gamma_{\rho}^+}$ and $C_1$ and $ \Gamma_{\rho}$ are tangent
at some interior point of $C_1$. This gives a contradiction  by the
maximum principle,  keeping in mind the hypothesis about the
curvature of $C$
 and $\Gamma_\rho$ and the fact that $C_1$ belongs to the closure of the convex
component of $\hi2 \setminus \Gamma_\rho$. This concludes the proof .
\end{proof}

\begin{definition}\label{D.C1}
\begin{enumerate}
 \item \label{item.C0} Let $(p_n)$ be a sequence in $\hi2$ converging to some
point $p\in \hi2$.
 Let $v\in T_p\hi2$ and
$v_n \in T_{p_n} \hi2$ be non zero vectors. Assuming that
$p_n\not=p$, we
denote by $c_n$ the geodesic   passing through $p$ and $p_n$, and by $T_n$
the translation along $c_n$ such that
$T_n (p)=p_n$. If $p_n=p$, we set $T_n=Id$. Let $\alpha_n\in [0,\pi]$
be the non-oriented angle between $v_n$ and $T_n (v)$.

We say that the sequence $(v_n)$ {\em converges} to $v$
(denoted shortly by $v_n \rightarrow v$) if
$\alpha_n \rightarrow 0$ and
$|v_n|_{\hi2} \to |v|_{\hi2}$.

\item  \label{item.C1} Let $\gamma$ be a geodesic  in $\hi2$ and
$(\gamma_n)$ be a sequence
of  complete and regular $C^1$-curves in $\hi2.$ We say that  the
sequence $(\gamma_n)$ {\em converges   to $\gamma$ in the $C^1$ topology } if:
\begin{enumerate}
\item \label{C0} For any $\varepsilon >0$, there is $n_0\in \n$ such that for
any $n>n_0$ the curve $\gamma_n$ stays in the region of $\hi2$ bounded by the
two equidistant lines of $\gamma$ with distance $\varepsilon$ from $\gamma$.

\item \label{C1} Let $p\in \gamma$, and let $(p_n)$, $p_n\in
\gamma_n$,
be any sequence
converging to $p$. Let
$v_n\in T_{p_n}\gamma_n$ be a unit tangent vector of $\gamma_n$ at $p_n$.
If the sequence $v_n$ converges to a unit vector $v\in T_p\hi2$,
then $v\in T_p \gamma$, that is $v$ is tangent to
$\gamma$ at
$p$.
\end{enumerate}
\end{enumerate}
\end{definition}

\begin{prop}\label{P.C1}
 Let $\gamma  \subset \hi2$ be a geodesic  . Let $(\gamma_n)$ be a sequence
of  complete and regular $C^2$-curves such that:
\begin{itemize}
\item$\pain \gamma_n = \pain \gamma$ for any $n \in \n$.

\item$\sup _{q\in \gamma_n}\big\{\rvert \kappa_{\gamma_n}(q)
\rvert \big\}
\xrightarrow[n\to \infty]{} 0 $, where   $ \kappa_{\gamma_n}(q) $ is the
geodesic curvature of $\gamma_n$ at point
$q\in \gamma_n$.
\end{itemize}
 Then, the sequence $(\gamma_n)$ converges  to $\gamma$ in the $C^1$ topology.
 \end{prop}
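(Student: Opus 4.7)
The plan is to verify the two conditions (i) and (ii) of Definition \ref{D.C1}(2) separately. The key tool in both cases is the maximum principle for curves (\cite[Theorem 2.6.27]{ST3}) together with the uniform vanishing curvature bound $\sup|\kappa_{\gamma_n}| \to 0$.

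For property (i) I will compare each $\gamma_n$ with the family of equidistants $L_d^\pm$ of $\gamma$; recall that $L_d^\pm$ is a smooth complete curve of constant geodesic curvature $\tanh d$ (towards $\gamma$) with $\pain L_d^\pm = \pain\gamma$. Fix $\varepsilon > 0$ and choose $n$ large enough that $k_n := \sup|\kappa_{\gamma_n}| < \tanh\varepsilon$. Arguing by contradiction, suppose some point of $\gamma_n$ lies on the positive side of $\gamma$ at distance greater than $\varepsilon$, and set
\[
s^*_n := \sup\bigl\{d(p,\gamma) : p \in \gamma_n,\ p \text{ on the positive side}\bigr\}.
\]
Since $\gamma_n$ is properly embedded in $\hi2$ while the $L_s^+$ accumulate on a boundary arc of $\pain\hi2$ as $s \to \infty$, we have $s^*_n < \infty$, and by assumption $s^*_n > \varepsilon$. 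If the supremum is attained at some $p^* \in \gamma_n$, then $\gamma_n$ is tangent to $L_{s^*_n}^+$ at $p^*$ from the $\gamma$-side, and the maximum principle immediately yields $k_n \geq \tanh s^*_n > \tanh\varepsilon$, contradicting the choice of $n$.

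The main obstacle is that $s^*_n$ need not be attained, because $\gamma_n$ and $L_{s^*_n}^+$ share the asymptotic boundary $\pain\gamma$ and the tangency could happen only ``at infinity''. To handle this, I pick a maximizing sequence $q_k \in \gamma_n$ with $d(q_k,\gamma) \to s^*_n$; necessarily $q_k$ escapes towards $\pain\gamma$. I then compose with hyperbolic translations $T_k$ along $\gamma$ to bring the foot of the perpendicular from $q_k$ to a fixed point $p_0 \in \gamma$. These translations fix both $\gamma$ and $\pain\gamma$ pointwise, so the translated curves $T_k(\gamma_n)$ retain the same asymptotic boundary and the same curvature bound, while $T_k(q_k)$ converges to a finite point $q^* \in \hi2$ with $d(q^*,\gamma) = s^*_n$. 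Arzela-Ascoli applied to the arclength parametrizations then yields a $C^1_{\mathrm{loc}}$ subsequential limit $\wt\gamma_\infty$ through $q^*$, of class $C^{1,1}$, with $|\kappa_{\wt\gamma_\infty}| \leq k_n$ in a weak sense and entirely contained in $\overline{\{d(\cdot,\gamma) \leq s^*_n\}}$. Hence $q^*$ is an interior maximum of $d(\cdot,\gamma)$ on $\wt\gamma_\infty$, and comparing $\wt\gamma_\infty$ with $L_{s^*_n}^+$ again gives $k_n \geq \tanh s^*_n$, the same contradiction. The symmetric argument on the negative side completes the proof of (i).

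For property (ii), given (i) the vanishing curvature forces $\gamma_n$ to align with $\gamma$. Let $p \in \gamma$, $p_n \in \gamma_n$ with $p_n \to p$, and let $v_n \in T_{p_n}\gamma_n$ be unit tangents converging to some unit $v \in T_p\hi2$. Suppose for contradiction that the angle $\alpha$ between $v$ and $T_p\gamma$ is positive. Parametrize $\gamma_n$ by arclength as $c_n : \R \to \hi2$ with $c_n(0) = p_n$ and $c_n'(0) = v_n$; then the covariant derivative $D_s c_n'$ has hyperbolic norm $|\kappa_{\gamma_n}|$ tending to $0$ uniformly. Arzela-Ascoli yields a $C^1_{\mathrm{loc}}$ subsequential limit $c_\infty : \R \to \hi2$ with $c_\infty(0) = p$, $c_\infty'(0) = v$ and $D_s c_\infty' \equiv 0$, so $c_\infty$ is the geodesic $g$ of $\hi2$ issuing from $p$ in direction $v$. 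Since $\alpha > 0$, I can fix $s_0 > 0$ and $\varepsilon_0 > 0$ with $d_{\hi2}(g(s_0),\gamma) > 2\varepsilon_0$; then $c_n(s_0) \to g(s_0)$ gives $d_{\hi2}(c_n(s_0),\gamma) > \varepsilon_0$ for $n$ large, contradicting (i) applied with $\varepsilon_0$. Hence $v \in T_p\gamma$, and the proposition follows.
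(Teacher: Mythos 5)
Your overall strategy is close in spirit to the paper's: compare $\gamma_n$ with equidistant lines of $\gamma$ via the maximum principle for (i), and derive (ii) from (i). However, the mechanism you use to locate the tangency point in (i) has a genuine gap.

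You define $s^*_n := \sup\{d(p,\gamma) : p\in\gamma_n,\ p\text{ on the positive side}\}$ and assert that $s^*_n<\infty$ "since $\gamma_n$ is properly embedded while the $L_s^+$ accumulate on a boundary arc of $\pain\hi2$." This reasoning does not work: the asymptotic boundary of the region $\{d(\cdot,\gamma)>s\}$ on the positive side is the \emph{full} arc of $\pain\hi2$ between the two endpoints of $\gamma$ (for every $s$), so in particular it always contains $\pain\gamma_n$. A properly embedded curve with $\pain\gamma_n=\pain\gamma$ can perfectly well contain points whose distance to $\gamma$ tends to $+\infty$ while the points themselves escape to an endpoint of $\gamma$ (in the upper half-plane model with $\gamma$ the $y$-axis and $p_\infty=\infty$, the sequence $(k^2,k)$ converges to $\infty\in\pain\hi2$ while $d\big((k^2,k),\gamma\big)\to\infty$). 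The boundedness of $d(\cdot,\gamma)$ along $\gamma_n$ is a real consequence of the \emph{curvature} bound, and proving it already requires the comparison argument you are in the middle of running — it cannot be taken for granted at the outset. The paper's proof avoids this by never introducing $s^*_n$: it brings a translated copy $L_\varepsilon'$ of the equidistant from the transversal direction (along a geodesic $\gamma'$ crossing $\gamma$, towards an ideal point $p_\infty'\notin\pain\gamma_n$), for which one knows a priori that $L_\varepsilon'$ is disjoint from $\ov{\gamma_n}$ when the translation is large, and then sweeps back to the first contact, which must be an interior tangency because $\pain L_\varepsilon'\cap\pain\gamma_n=\emptyset$ for every nonzero translation. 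A secondary (fixable) issue in your (i): the Arzel\`a–Ascoli limit $\wt\gamma_\infty$ is only $C^{1,1}$, so the cited $C^2$ maximum principle does not apply verbatim; one should either invoke a $C^{1,1}$/viscosity version or compare the finite curves $T_k(\gamma_n)$ with a slightly smaller equidistant before passing to the limit.

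Part (ii) is correct as written and conceptually cleaner than the paper's argument, though it is logically different: you extract a $C^1_{\rm loc}$ limit which is a geodesic in direction $v$ and contradict (i); the paper instead places a curve of small constant curvature through $p_n$ tangent to $\gamma_n$ which separates the two points of $\pain\gamma$ and applies the maximum principle directly, so the paper's (ii) does not depend on (i). Given the gap above, though, your (ii) inherits the dependency on an incompletely proved (i).
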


\begin{proof}

 Let us prove (\ref{C0}) of Definition \ref{D.C1}.

Let $\varepsilon >0$ and let $n_0\in \n$ such that for any $n > n_0$ we have
\begin{equation*}
\tanh \varepsilon > \sup _{q\in \gamma_n}\big\{\rvert \kappa_{\gamma_n}(q)
\rvert
\big\}.
\end{equation*}
Denote by $L_\varepsilon ^1$ and $L_\varepsilon ^2$ the
two equidistant lines of $\gamma$ with distance  $\varepsilon$
from $\gamma$. It suffices to show that,
for any $n>n_0$, the curve $\gamma_n$ belongs to the
convex component of both $\hi2 \setminus L_\varepsilon ^1$ and
$\hi2 \setminus L_\varepsilon ^2$. We prove that fact for
$L_\varepsilon :=L_\varepsilon ^1$. The proof for $L_\varepsilon ^2$ is
analogous.

\

Let $\gamma^\prime$ be any geodesic of $\hi2$ different from $\gamma$,
intersecting $\gamma$. Let
$p_\infty ^\prime\in \pain \hi2 $ be the asymptotic point of $ \gamma^\prime$
which stays in the asymptotic boundary of the non convex component of
$\hi2\setminus L_\varepsilon$. Consider the translations along
$ \gamma^\prime,$ towards $p_\infty ^\prime$.
 Assume by contradiction that for any $n_0$ there exists $n>n_0$ such that
$\gamma_n$ does not belong to the convex component of
$\hi2 \setminus L_\varepsilon$. There exists a translated copy
$L_\varepsilon^\prime $ of $L_\varepsilon$ such that:

\begin{itemize}
 \item $L_\varepsilon^\prime$ intersects the curve $\gamma_n$  at one point
$q_n$.
\item $L_\varepsilon^\prime$ and  $\gamma_n$ are
tangent at $q_n$.

\item  $\gamma_n$ belongs to the closure of the convex
component of $\hi2\setminus L_\varepsilon^\prime$.
\end{itemize}
Since the geodesic curvature of $L_\varepsilon^\prime$ is
$\tanh \varepsilon$ (with
respect  to
the normal direction pointing towards the convex component of $\hi2\setminus L_\varepsilon^\prime$) and since
$\tanh \varepsilon >\sup _{q\in \gamma_n}\big\{\rvert \kappa_{\gamma_n}(q)
\rvert
\big\}$, we obtain
a contradiction with the maximum
principle. This completes the proof of (\ref{C0}).

\medskip

Now, we  prove (\ref{C1}) of Definition \ref{D.C1}.

By contradiction, assume that the unit vector $v\in T_p \hi2$ is not tangent to
$\gamma$.

Let $\varepsilon >0$ and let $L\subset \hi2$ be one of the two complete
curves  passing through $p$, tangent to $v$  whose absolute value of the
geodesic curvature is $\tanh \varepsilon$. Since $v$ is not tangent to
$\gamma$,
if $\varepsilon$ is small enough, then the curve $L$ separates the two
points of the asymptotic boundary of $\gamma$,
say $p_\infty$ and $q_\infty$.

As in the Definition \ref{D.C1}, we denote by $c_n$ the geodesic   passing
through
$p$ and $p_n$. Let $T_n$ be the hyperbolic translation along $c_n$ such
that $T_n(p)=p_n$. Let $R_n$ be the rotation in $\hi2$ around $p_n$ such that
$R_n ( T_n(v))=v_n$. Therefore, $L_n :=(R_n \circ T_n)(L)$ is a
complete curve through $p_n$, tangent to $\gamma_n$ at $p_n$,
with constant (absolute
value) curvature equal to $\tanh \varepsilon$.  If $n$ is large enough then the
curve
$L_n$ separates $p_\infty$ and $q_\infty$, since this is true for
$L$.

Observe that, if  $n$ is large enough, we have
$\sup _{q\in \gamma_n}\big\{\rvert \kappa_{\gamma_n}(q) \rvert \big\}
<\tanh \varepsilon$.
Consequently, using the maximum principle for curves in the same way as
before, we can show
that
$\gamma_n$ entirely belongs to the closure of one of the two components of
$\hi2 \setminus L_n$. But this gives a contradiction with the
assumption that
  $\pain \gamma_n = \pain \gamma=\{p_\infty, q_\infty\}$.
We conclude that $v$ is tangent to $\gamma$, as desired.
\end{proof}

\begin{remark}
 We can extend Definition \ref{D.C1} to any dimensional hyperbolic space
$\hi n,\ n\geq 2$. Moreover, we can prove in the same way as in Proposition
\ref{P.C1},
that if $\Pi\subset \hi n$ is a geodesic hyperplane and if $(\Pi_n)$ is a
sequence of
complete and regular $C^2$-hypersurfaces of $\hi n$ such that $\pain \Pi_n=\pain
\Pi$ for
any $n$ and $\sup_{q \in \Pi_n}\{| H_n (q)|\}\to 0 $, where $H_n(q)$ denotes the
mean
curvature of $\Pi_n$ at $q$,
then the sequence $(\Pi_n)$
converges
$C^1$ to $\Pi$.

\end{remark}

\begin{prop}\label{P.graph}
 Let $\gamma_1 \subset \hi2$ be a geodesic and let
$p_\infty\in \pain \hi2$ such that
$p_\infty\not\in  \pain\gamma_1$.

For any $\rho >0$, let $L_\rho$ be the equidistant
line to $\gamma_1$ whose  distance to $\gamma_1$ is $\rho$, such that
$p_\infty$ belongs to the asymptotic boundary of the
non convex component of $\hi2\setminus L_\rho$.

Let $0< k <1$ and let $c : [0,\infty\mathclose[ \rightarrow \hi2$
be a regular $C^2$-curve such that
$\pain c([0,\infty\mathclose[ )= \{p_\infty\}$ and such that
$\rvert \kappa (c(t))\rvert  <k$ for any $t\geq 0$.
Set $\rho_0=\max \{d_{\hi2}\big(c(0),\gamma_1\big), \tanh^{-1} (k)\}$.

 Then, for any $\rho > \rho_0$, one has the following facts.

 \begin{enumerate}
 \item \label{item.graph} The equidistant line $L_\rho$ cuts the curve
 $c([0,\infty\mathclose[ )$ at a unique point. Therefore, there exists $t_0>0$
such that the curve $c([t_0,\infty[)$ is a horizontal graph with respect to
$\gamma_1.$

\item \label{item.transversal} The equidistant line $L_\rho$ is
transversal to  $c([0,\infty\mathclose[ ).$
\end{enumerate}
\end{prop}

\begin{proof}
 Let $\rho >\rho_0$ and let $C= c\big([0,\infty\mathclose[ \big)$. Since
$\pain C=p_\infty$, the equidistant line $L_\rho$ must
intersect the curve $C$ at least at one point. Assume  by contradiction that
$L_\rho$ {   cut $C$ in at least two points}. By construction,
$c(0)$ belongs to the convex component of
$\hi2 \setminus L_\rho$. Let $p_1\in  L_\rho \cap C$ be
the first intersection  point from $c(0)$.
 The boundary maximum principle for curves
shows that the curves $ L_\rho $ and
$C$ are not tangent at $p_1$. Let
$p_2\in  L_\rho \cap C$ be the first intersection point
after $p_1$. Thus, the whole arc
 of $C$ between $p_1$ and $p_2$ belongs to the non convex component
of  $\hi2 \setminus L_\rho$. Now we obtain a contradiction with the maximum
principle in the following way.

\smallskip

 Let $p_\infty^\prime\in \pain \hi2$ be a point in the asymptotic boundary of
the convex component of $\hi2 \setminus  L_\rho$. Let
$\gamma\subset \hi2$ be the  geodesic   such that
$\pain \gamma =\{p_\infty, p_\infty^\prime\}$.
Considering the hyperbolic translations along
$\gamma$ towards $p_\infty$, we obtain a translated copy $L_\rho^\prime$ of
$L_\rho$ such that:

\begin{itemize}
 \item $L_\rho^\prime$ intersects the arc  of $C$  between $p_1$ and $p_2$   at one point $q$.
\item $L_\rho^\prime$ and  the arc of $C$  between $p_1$ and $p_2$ are tangent at $q$.
\item   The arc  of $C$  between $p_1$ and $p_2$ belongs to the closure of the convex
component of $\hi2\setminus L_\rho^\prime$.
\end{itemize}

Since the geodesic curvature of $L_\rho^\prime$ is $\tanh \rho$ (with respect to
the normal direction pointing  towards the  convex component of $\hi2\setminus L_\rho^\prime$) and since
$\tanh \rho > k > \sup_{q\in C}\{\rvert \kappa (C) \rvert\} $, we obtain
a contradiction with the maximum
principle.
So  Assertion (\ref{item.graph})  is proved.

\medskip

Now we prove Assertion (\ref{item.transversal}).

Suppose, by contradiction, that for some  $\rho > \rho_0$, the equidistant line $L_{\rho}$ is
tangent
to the curve $C$ at some point $p_1$.
{   Recall that the curvature of $L_\rho$ is strictly greater, in absolute
value, than the curvature of $C$. We deduce from the maximum principle that
an open arc of $C$, containing $p_1$, remains in the non convex component  of
$\hi2 \setminus L_\rho$.}

We set
$C_1=c\big(\mathopen] \rho_0, \infty\mathclose[\big)$. Then, the first part
of the proof shows that the curve $C_1\setminus \{p_1\}$ remains in the
non convex component of $\hi2 \setminus L_{\rho}$.
Therefore, for $\varepsilon >0$ small
enough, the equidistant line $L_{\rho + \varepsilon }$ intersects the curve
$C$
at least at two different points near $p_1$, giving a contradiction with
 assertion (1).
\end{proof}


\begin{thebibliography}{999}

\bibitem{Abresch-Rosenberg}\textsc{U. Abresch, H. Rosenberg:} {\em  A Hopf
differential for constant mean curvature surfaces in
${\mathbb S}^2\times{\mathbb R}$ and ${\mathbb H}^2\times{\mathbb R},$} Acta
Math. 193 (2004), no. 2, 141-174.


\vskip1.5mm

\bibitem{Alexandrov} \textsc{A.D. Alexandrov:}
{\em Uniqueness theorems for surfaces in the large, V,}
Vestnik, Leningrad Univ. 13 (1958). English translation:
AMS Transl. 21 (1962), 412--416.

\vskip1.5mm

{   \bibitem{C-R} \textsc{P.  Collin, H. Rosenberg:} {\em Construction of
harmonic diffeomorphisms and minimal graphs}, Ann. of Math. (2) 172 (2010), no.
3, 1879--1906.}

\vskip1.5mm

\bibitem{Daniel} \textsc{B. Daniel:}
{\em Isometric immersions into $\mathbb{S}^n\times\R$ and
$\hi{n}\times\R$ and applications to minimal surfaces},  Trans. Amer.
Math. Soc. 361  (2009),  no. 12, 6255--6282.

\vskip1.5mm

\bibitem{Frensel} \textsc{K.R. Frensel:} {\em Stable complete surfaces with
constant mean
curvature}, Bol. Soc. Bras. Mat., Vol. 27 (1996), N. 2, 129--144.


\vskip1.5mm

\bibitem{G-T}  \textsc{D. Gilbarg, N.S. Trudinger:} {\it Elliptic partial
differential equations of second order}, Reprint of the 1998 edition. Classics
in Mathematics. Springer-Verlag, Berlin, 2001.


\vskip1.5mm

\bibitem{HTTW} \textsc{Z.C. Han, L.F. Tam, A. Treibergs, T. Wan:}
{\it Harmonic maps from the complex plane into surfaces with nonpositive
curvature}, Communications in Analysis and Geometry,
Vol 3, (1995),  N. 1, 85--114.

\vskip1.5mm

\bibitem{HST} \textsc{L. Hauswirth, R. Sa Earp, E. Toubiana:} {\it Associate and
  conjugate minimal immersions in $M\times\r,$ } Tohoku Math. J. (2008) 2, 60,
 267--286.

\vskip1.5mm

\bibitem{HR} \textsc{L. Hauswirth, H. Rosenberg:} {\it Minimal surfaces of
finite total  curvature  in $\h^2\times\r,$ } Matematica Contemporanea (2006)
31, 65--80.

\vskip1.5mm

\bibitem{Hub} \textsc{A. Huber:} {\it On subharmonic functions and differential
geometry
in the large}, Comment. Math. Helv. 32, (1957), 13--72.

\vskip1.5mm

\bibitem{Kreyszig} \textsc{E. Kreyszig:} {\it Introduction to differential
geometry and Riemannian geometry}, Translated from the German, Mathematical
Expositions No. 16, University of Toronto Press, Toronto, Ont. 1968

\vskip1.5mm


\bibitem{MMR} \textsc{ F. Mart\'\i n, R. Mazzeo, M. Rodr\'\i guez}
{\em Minimal surfaces with positive genus and finite total curvature in
 $\h^2\times\r$},
Geom. Topol. 18 (2014), no. 1, 141--177.

\vskip1.5mm

\bibitem{Minsky} \textsc{ Y. N. Minsky:} {\it Harmonic maps, length, and
energy in Teichm\"uller space},  J. Differential Geom.  35  (1992),  no. 1,
151--217.

\vskip1.5mm


\bibitem{MR} \textsc{F. Morabito, M. Rodriguez:} {\it Saddle towers and
minimal $k$-noids in $\h^2\times\r$},    J. Inst. Math. Jussieu, 11 (2),
(2012), 333--349.

\vskip1.5mm

\bibitem{NSWT}  \textsc{B. Nelli, R. Sa Earp, W. Santos, E. Toubiana:}
{\it Uniqueness of $H$-surfaces in $\hi2 \times \R$, $|H|\leq 1/2$, with
boundary one
or two parallel horizontal circles},  Ann. Global Anal. Geom.  33  (2008),
 no. 4, 307--321.

\vskip1.5mm

\bibitem{Osserman}\textsc{R. Osserman:} {\em  A survey on minimal surfaces,}
Dover Publications (1986).

\vskip1.5mm

\bibitem{P}\textsc{J. Pyo:} {\it  New Complete Embedded Minimal Surfaces in
$\h^2\times\r$},   Ann. Global Anal. Geom.  40  (2011),  no. 2, 167--176.

\vskip1.5mm

\bibitem{P-R} \textsc{J. Pyo, M. Rodriguez:} {\it Simply-connected  minimal
surfaces of finite total curvature in $\hi2\times \R$}, to be published
in  International Mathematics Research Notices, arXiv:1210.1099.

\vskip1.5mm

\bibitem{SaEarp} \textsc{R. Sa Earp:} {\it Uniqueness of minimal surfaces whose
boundary
is a horizontal graph and some Bernstein problems in $\hi2\times{\mathbb R}$},
to be published in Math. Zeit.

\vskip1.5mm

\bibitem{ST} \textsc{R. Sa Earp, E. Toubiana:} {\it Screw motion
    surfaces in $\hd\times \R$ and $\sd\times \R$},
Illinois J. Math. \textbf{49} (2005), 1323--1362.

\vskip1.5mm




\bibitem{ST3}  \textsc{R. Sa Earp, E. Toubiana:} {\em Introduction \`a la
g\'eom\'etrie hyperbolique et aux surfaces de Riemann,} Cassini (2009).

\vskip1.5mm


\bibitem{Schoen} \textsc{R. Schoen:} {\it
Uniqueness, symmetry, and embeddedness of minimal surfaces}, J. Differential
Geom. 18 (1983), n. 4, 791--809.



\end{thebibliography}
\end{document}